\def\orcid#1{\kern .08em\href{https://orcid.org/#1}{\includegraphics[keepaspectratio,width=0.7em]{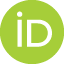}}}  
\numberwithin{equation}{section}
\newtheorem{theorem}{Theorem}[section]
\newtheorem{example}{Example}[section]
\newtheorem{lemma}{Lemma}[section]
\newtheorem{proposition}{Proposition}[section]
\newtheorem{corollary}{Corollary}[section]
\newtheorem{remark}{Remark}[section]
\newtheorem{definition}{Definition}[section]
\newtheorem{assumption}{Assumption}[section]
\newcommand{\R}{\mathbb{R}}
\newcommand{\N}{\mathbb{N}}
\newcommand{\1}{\mathbbm{1}}
\newcommand{\Linop}{\mathbb{L}}
\DeclareMathOperator{\meas}{\mathrm{meas}}
\newcommand{\dx}{\,\mathrm{d}x}
\newcommand{\ds}{\,\mathrm{d}s}
\newcommand{\barK}{K}
\newcommand{\norm}[1]{\|#1\|}
\def\weakto{\rightharpoonup}
\renewcommand{\epsilon}{\varepsilon}
\begin{document}

\title{On the no-gap second-order optimality conditions for a non-smooth semilinear elliptic optimal control}
\author{
	\name{Vu Huu Nhu \orcid{0000-0003-4279-3937}\textsuperscript{a}\thanks{CONTACT Vu Huu Nhu. Email: nhu.vuhuu@phenikaa-uni.edu.vn}}
	\affil{\textsuperscript{a} Faculty of Fundamental Sciences, PHENIKAA University, Yen Nghia, Ha Dong, Hanoi 12116, Vietnam}
}

\maketitle

\begin{abstract}
    This work is concerned with second-order necessary and sufficient optimality conditions for optimal control of a non-smooth semilinear elliptic partial differential equation, where the nonlinearity is the non-smooth max-function and thus the associated control-to-state operator is in general not G\^{a}teaux-differentiable. In addition to standing assumptions, two main hypotheses are imposed. The first one is the G\^{a}teaux-differentiability at the considered control of the objective functional and it is precisely characterized by the vanishing of an adjoint state on the set of all zeros of the corresponding state. The second one is a structural assumption on the sets of all points at which the values of the interested state are 'close' to the non-differentiability point of the max-function. We then derive a 'no-gap' theory of second-order optimality conditions in terms of a second-order generalized derivative of the cost functional, i.e., for which the only change between necessary or sufficient second-order optimality conditions is between a strict and non strict inequality.
\end{abstract}

\begin{keywords}
	Non-smooth semilinear elliptic equation; optimal control; second-order necessary and sufficient optimality condition; no-gap theory; control constraints
\end{keywords}

\begin{amscode}
	49K20, 49J20,  35J25, 35J61, 35J91, 90C48
\end{amscode}

\section{Introduction}
In this paper, we consider the following non-smooth semilinear elliptic optimal control problem
\begin{equation}
    \label{eq:P}
    \tag{P}
    \left\{
        \begin{aligned}
            \min_{u\in L^2(\Omega)} &j(u) := \int_\Omega L(x,y_u) \dx + \frac{\nu}{2} \norm{u}_{L^2(\Omega)}^2 \\
            \text{s.t.} \quad &-\Delta y_u + \max(0,y_u) = u \, \text{ in } \Omega, \quad y_u = 0 \, \text{ on } \partial\Omega, \\
            & \alpha(x) \leq u(x) \leq \beta(x) \quad \text{a.e. } x \in \Omega,
        \end{aligned}
    \right.
\end{equation}
where $\Omega$ is a bounded domain in $\R^N$, $N \in \{2,3\}$, with a Lipschitz boundary, $L: \Omega \times \R \to \R$ is a Carath\'{e}odory function and is of class $C^2$ with respect to the second variable, extended measurable functions $\alpha, \beta: \Omega \to [-\infty,\infty]$ satisfy $\beta(x) - \alpha(x) \geq \gamma$ for some $\gamma>0$ and almost everywhere (a.e.) $x \in \Omega$, and $\nu$ is a positive constant. For the precise assumptions on the data of \eqref{eq:P}, we refer to \cref{sec:assumption}.

The state equation in \eqref{eq:P} arises, for instance, in models of the deflection of a stretched thin membrane partially covered by water (see \cite{Kikuchi1984}) and analogous equations arise in free boundary problems for a confined plasma; see, e.g., \cite{Temam:1975,Rappaz:1984}. The salient feature of the state equation is, of course, the non-differentiability of the max-function. This leads to the non-differentiability of the corresponding control-to-state operator \cite{Constantin2017}; see also \cref{prop:control-to-state} below. Consequently, standard techniques for deriving the second-order necessary and sufficient optimality conditions that are based on the second-order differentiability of the control-to-state operator are typically inapplicable, making the analytical and numerical treatment challenging.

The optimal control problem \eqref{eq:P} without control constraints of the form $\alpha \leq u \leq \beta$ was investigated in \cite{Constantin2017} to derive the Clarke- (C-), Bouligand- (B-), and strong stationaries, where the strong stationarity is strongest and the C-stationarity is weakest. However, as seen in \cref{sec:1st-OS}, the objective functional is, indeed, always G\^{a}teaux-differentiable in any stationary point  and thus the notions of these three types of stationarities are equivalent; see \cref{cor:G-diff-no-constraint} below.

\bigskip

Let us comment on related work.  While second-order sufficient optimality conditions (SSC) for optimal control problems governed by smooth PDEs have been intensively studied; see, e.g., \cite{CasasMateos2002,CasasDhamo2011-2ndOSs,Casas2008,KrumbiegelNeitzelRosch2010,RoschTroltzsch2006,RoschTroltzsch2003,Troltzsch2010}, the survey \cite{CasasTroltzsch:2015} and the references therein, the works on SSC for optimal control of non-smooth PDEs are rather rare. To the best of our knowledge, there are comparatively few contributions, such as \cite{KunischWachsmuth2012,Livia2019,ChristofWachsmuth2019} and a recent work \cite{ClasonNhuRosch2020}, on this field. The common approach pursued in these papers is to exploit  a second-order Taylor-type expansion of the mapping $u \mapsto j(u)$, where the objective functional $j$ is, in general, not G\^{a}teaux-differentiable due to the non-differentiability of the control-to-state mapping. In order to derive the SSC, the authors in \cite{KunischWachsmuth2012,Livia2019,ChristofWachsmuth2019} employed an additional sign assumption on the Lagrange multipliers in the vicinity of the contact set that ensures a so-called 'safety distance' \cite[Rem.~4.13]{BetzMeyer2015}. In contrast, the authors in \cite{ClasonNhuRosch2020} used an assumption on the finiteness of oscillation of the considered state around the non-smooth points (see definition (5.10) and Example 5.3 in \cite{ClasonNhuRosch2020}).

Regarding second-order necessary optimality conditions (SNC) for optimal control of non-smooth PDEs, the literature  on this topic is significantly rare. As far as we know the only contribution dealing with SNC for non-smooth PDEs was addressed in \cite{ClasonNhuRosch2020}, where the control-to-state operator is shown to be, in fact, Fr\'{e}chet-differentiable although the coefficient of the PDE is non-smooth. To the best of the authors' knowledge, the investigation of SNC for optimal control problems with non-smooth control-to-state operator is an open research topic. 

The aim of this paper inspired by the work of Clason et al. \cite{ClasonNhuRosch2020} is to obtain second-order necessary and sufficient optimality conditions for \eqref{eq:P} and thus a 'no-gap' theory of these conditions. Unlike the situation in  \cite{ClasonNhuRosch2020}, the control-to-state operator considered in this paper is, however, not G\^{a}teaux-differentiable. This makes problem studied in this paper is more challenging. 
To overcome this difficulty, we first introduce a second-order generalized derivative $Q$ (see \cref{def:curvature-func}) of the objective functional $j$, which mainly depends on the non-smooth point of the max-function. Using a second-order Taylor-type expansion then yields the desired aim  (see \cref{thm:2nd-OS-nec,thm:2nd-OS-suf}). In order to do this, besides the standing assumptions (\cref{ass:standing}), the G\^{a}teaux-differentiability of the objective functional $j$ in the considered control and a structural assumption (see \cref{ass:structure-non-diff} below) have to be fulfilled. Firstly, the G\^{a}teaux-differentiability of $j$ in a control $u$ is characterized via the vanishing of an adjoint state on the set of all zeros of the corresponding state $y$, as shown in \cref{thm:G-diff-characterization} below. Interestingly, when considering the optimal control problem \eqref{eq:P} without control constraints, i.e., $\alpha = -\infty$ and $\beta = \infty$, the G\^{a}teaux-differentiability characterization is automatically satisfied under the regularity of the domain $\Omega$, e.g., $\Omega$ either is a convex domain or has a $C^{1,1}$-boundary. Secondly, the structural-type assumption  was used in \cite{WachsmuthWachsmuth2011,WachsmuthWachsmuth2011-CC} to show the a priori convergence of the regularization error estimates for elliptic optimal control problems, in \cite{Wachsmuth2013} to prove the convergence rates with respect to the discretization parameter for a parameter choice rule of Tikhonov regularization of control-constrained optimal control problems, and in \cite{DeckelnichHinze2012} to prove a priori error estimates for the discretized but unregularized problem.

Recently, the authors in \cite{ChristofWachsmuth2018} have been considered the minimization problem $\min_{u \in C}j(u)$ with a second-order differentiable functional $j$ and a closed, nonempty set $C$. It is well known that the so-called 'sigma-term' $\sigma(\cdot;T^2_C(u,h))$, defined as the value of a support functional of an outer second-order tangent set $T^2_C(u,h)$, contributes prominently in the gap between SNC and SSC (see; e.g., \cite{BonnansShapiro2000}). To remove this gap, the term $Q^{u, \varphi}_C(h)$ was introduced in \cite[Def.~3.1]{ChristofWachsmuth2018} that defines a directional curvature functional for the admissible set $C$ and is very close to the sigma-term $-\sigma(-\varphi;T^2_C(u,h))$ (cf. \cite[Lem.~5.4]{ChristofWachsmuth2018}). The term $Q^{u, \varphi}_C(h)$ can therefore be viewed as a replacement for the  sigma-term. In contrast, the term $Q$ defined in \cref{def:curvature-func} is a replacement of the non-existing second-order derivative of $j$ (see \cref{rem:curvature-comparison} below).

Let us finally emphasize that our results and the underlying analysis can be applied to a non-smooth semilinear elliptic optimal control problem with a finitely $PC^2$-nonlinearity (see \cite{ClasonNhuRosch2020,ClasonNhu2019} for the defintion of finitely $PC^2$-functions).
However, in order to keep the discussion as concise and clear as possible and to be able to focus on the main arguments, we restrict the analysis to \eqref{eq:P}.

\bigskip

The organization of the remainder of this paper is as follows. In the next section, we introduce the notation and the standing assumptions, that will be used throughout the whole paper. In \cref{sec:C2S-oper-G-diff}, we present required properties of the state equation and the characterization of the G\^{a}teaux-differentiability of $j$. \cref{sec:1st-OS} is devoted to the existence of minimizers and the first-order optimality conditions. There, we recall the C-stationarity system for \eqref{eq:P} shown in \cite[Cor.~4.5 \& Rem.~4.9]{Constantin2017} and prove a relaxed optimality system when the objective functional is assumed to be G\^{a}teaux-differentiable in the considered control. Finally, the main results of the paper, the no-gap second-order necessary and sufficient condititions, are proved in \cref{sec:2nd-OC}. 

\section{Notation and standing assumptions}\label{sec:assumption}

\paragraph*{Notation.}
For a given point $u\in X$ and $\rho>0$, $B_X(u,\rho)$ and $\overline B_X(u,\rho)$ stand, respectively, for the open and closed balls of radius $\rho$ centered at $u$. The notation $X \hookrightarrow Y$, for Banach spaces $X,Y$, means that $X$ is continuously embedded in $Y$, and $X \Subset Y$ means that $X$ is compactly embedded in $Y$. 
We denote by $c_0^+$ the set of all positive sequences that converge to zero. For any function $y$, symbols $y^+$ and $y^{-}$ stand, respectively, for the positive and negative parts of $y$. For any local Lipschitz continuous function $f$, the symbol $\partial_C f$ denotes for the Clarke generalized gradient of $f$.
For a function $g:\Omega\to \R$ defined on a domain $\Omega\subset \R^N$ and a subset $M \subset \R$, by $\{ g \in M \}$ we denote the set of all points $x \in \Omega$ for which $g(x) \in M$. Analogously, for given functions $g_1,g_2$ and subsets $M_1, M_2 \subset \R$, the symbol $\{ g_1 \in M_1, g_2 \in M_2 \}$ indicates the set of all points such that the values at which of $g_1$ and $g_2$ belonging, respectively, to $M_1$ and $M_2$. 
For any set $\omega \subset \Omega$, the indicator function of $\omega$ is denoted by $\1_{\omega}$, i.e., $\1_\omega(x) = 1$ if $x \in \omega$ and $\1_\omega(x) =0$ otherwise. Finally, $C$ stands for a generic positive constant, which might be different at different places of occurrence. We also write, e.g., $C_\xi$ for a constant which depends only on the parameter $\xi$.

\medskip

Throughout the paper, we need the following standing assumptions.
\begin{assumption}[standing assumptions for the study of \eqref{eq:P}] \label{ass:standing}
	\leavevmode
	\begin{itemize}
		\item \label{ass:domain}
	    $\Omega \subset \R^N$, $N \in \{2,3\}$, is a bounded domain with a Lipschitz boundary $\partial \Omega$. 
	    \item \label{ass:Tikhonov-para}
	    $\nu >0$ is a given Tikhonov parameter.
	    \item \label{ass:control-constraints}
	    $\alpha, \beta: \Omega \to [-\infty,\infty]$ are measurable functions such that either $\alpha = -\infty$ and $\beta = \infty$ for a.e. $x \in \Omega$ or $\alpha, \beta \in L^2(\Omega)$ with $\beta(x) - \alpha(x) \geq \gamma$ for some $\gamma>0$ and for a.e. $x \in \Omega$.
		\item \label{ass:integrand}
	    The function $L: \Omega \times \R \to \R$ is Carath\'{e}odory such that $L(\cdot,0) \in L^1(\Omega)$ and, for a.e. $x \in \Omega$, the mapping $\R \ni y \mapsto L(x,y) \in \R$  is of class $C^2$. Moreover, for any $M>0$, there exist functions $\phi_M \in L^2(\Omega)$ and $\psi_M \in L^1(\Omega)$ such that
	    \begin{align*}
		    &|L'_y(x,y)| \leq \phi_M(x), \quad |L''_{yy}(x,y)| \leq \psi_M(x) \\
		    \intertext{and}
		    & |L''_{yy}(x,y_1) -L''_{yy}(x,y_2) | \leq \psi_M(x)|y_1 - y_2|
	    \end{align*}
	     for a.e. $x \in \Omega$ and for all $y, y_1, y_2 \in \R$ satisfying $|y|, |y_1|, |y_2| \leq M$.
	     
	     Moreover, in the case $\alpha = - \infty$ and $\beta = \infty$ a.e. in $\Omega$, there exist non-negative functions $\phi, \psi \in L^1(\Omega)$ satisfying
	     $L(x,y) \geq - \phi(x) - \psi(x)|y|$ for a.e. $x \in \Omega$ and for all $y \in \R$.
	\end{itemize}
\end{assumption}

A standard for the choice of $L$ is the quadratic function
\begin{equation*}
	L(x,y) = \frac{1}{2}(y- y_d(x))^2
\end{equation*}
with $y_d \in L^2(\Omega)$.

\medskip

From now on, we will denote the Nemytskii operator from $L^\infty(\Omega)$ to $L^\infty(\Omega)$ associated with the max-function by the same symbol. Similarly, $\max'(y;z)$ stands for the directional derivative of mapping $y \mapsto \max(0,y)$ at the point $y$ in the direction $z$, both considered as a scalar function and as the corresponding Nemytskii operator from $L^\infty(\Omega)$ to itself.

For given measurable functions $y,z$ on $\Omega$, we obviously have
\begin{align*}
	\max(0,y) &= \1_{\{ y>0\} } y \\
	\intertext{and}
	\max{'}(y;z)& = \1_{\{y >0\} } z + \1_{\{y=0 \}} \max(0,z)
\end{align*}
for a.e. in $\Omega$.

\section{Control-to-state operator and characterization of G\^{a}teaux-differentiability of the objective functional} \label{sec:C2S-oper-G-diff}

\subsection{Control-to-state operator} \label{sec:control-to-state}
We shall present in this subsection the Hadamard directional differentiability of the control-to-state operator.

Let us recall the state equation
\begin{equation} \label{eq:state}
	\left\{
	\begin{aligned}
		-\Delta y + \max(0,y) & = u && \text{in } \Omega, \\
		y &=0 && \text{on } \partial\Omega
	\end{aligned}
	\right.
\end{equation}
for $u \in L^2(\Omega)$. 
The existence and uniqueness of solutions in $H^1_0(\Omega)$ of \eqref{eq:state} are shown in \cite[Prop.~2.1]{Constantin2017}. The control-to-state operator $S: L^2(\Omega) \ni u \mapsto y \in H^1_0(\Omega)$ associated with \eqref{eq:state} is then well-defined, globally Lipschitz continuous, and directionally differentiable in the sense of Hadamard \cite[Thm.~2.2]{Constantin2017}. Moreover, since the boundary of $\Omega$ is assumed to be Lipschitz, the $C(\overline\Omega)$-regularity of solutions of \eqref{eq:state} is derived; see, e.g., \cite[Thm.~2.2]{CasasTroltzsch2009} and \cite[Thm.~4.7]{Troltzsch2010}. We therefore consider $S$ as a mapping from $L^2(\Omega)$ to $H^1_0(\Omega) \cap C(\overline\Omega)$.   

\begin{proposition} \label{prop:control-to-state}
	The control-to-state mapping $S: L^2(\Omega) \to H^1_0(\Omega) \cap C(\overline\Omega)$, $u \mapsto y$, associated with the state equation \eqref{eq:state} satisfies the following assertions:
	\begin{enumerate}[label=(\roman*)]
		\item \label{it:Lipschitz} $S$ is globally Lipschitz continuous;
		\item \label{it:dir-der} $S$ is Hadamard directionally differentiable at any $u \in L^2(\Omega)$ in any direction $h \in L^2(\Omega)$. Moreover, for any $u,h \in L^2(\Omega)$, a $S'(u;h) \in H^1_0(\Omega) \cap C(\overline\Omega)$ exists and satisfies
		\begin{equation} \label{eq:weak-strong-Hadamard}
			\frac{S(u + t_k h_k) - S(u)}{t_k} \to S'(u;h) \quad \text{strongly in } H^1_0(\Omega) \cap C(\overline\Omega)
		\end{equation}
		for any sequence $\{h_k\} \subset L^2(\Omega)$ such that $h_k \rightharpoonup h$ in $L^2(\Omega)$ and for any $\{t_k\} \subset c_0^+$. Furthermore, $\delta_h := S'(u;h)$ uniquely solves 
		\begin{equation}
			\label{eq:dir-der}
			\left \{
			\begin{aligned}
				-\Delta \delta_h + \max{'}(y_u;\delta_h) & = h && \text{in } \Omega, \\
				\delta_h &=0 && \text{on } \partial\Omega
			\end{aligned}
			\right.
		\end{equation}
		with $y_u := S(u)$;
		\item \label{it:max-pri} $S'(u;h)$ fulfills the maximum principle, i.e., 
		\[
			h \geq 0 \quad \text{a.e. on } \Omega \quad \implies \quad S'(u;h) \geq 0 \quad \text{a.e. on } \Omega;
		\]
		\item \label{it:wlsc} $S'(u;\cdot)$ is weakly-strongly continuous as function from $L^2(\Omega)$ to $H^1_0(\Omega) \cap C(\overline\Omega)$, i.e.,
		\begin{equation*}
			h_n \rightharpoonup h \quad \text{in } L^2(\Omega) \quad \implies \quad S'(u; h_n) \to S'(u;h)  \quad \text{in } H^1_0(\Omega) \cap C(\overline\Omega).
		\end{equation*}
	\end{enumerate} 
\end{proposition}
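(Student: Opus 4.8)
The plan is to build on \cite[Prop.~2.1 \& Thm.~2.2]{Constantin2017}, which provide the existence and uniqueness of solutions of \eqref{eq:state} in $H^1_0(\Omega)$, the global Lipschitz continuity of $S\colon L^2(\Omega)\to H^1_0(\Omega)$, and the Hadamard directional differentiability of $S$ together with the characterization of $S'(u;h)$ via \eqref{eq:dir-der}, and to upgrade these to the space $H^1_0(\Omega)\cap C(\overline\Omega)$ while adding the stability, sign, and weak-continuity statements. Two structural facts are used throughout: the scalar map $t\mapsto\max(0,t)$ is monotone nondecreasing, globally $1$-Lipschitz, and vanishes at $0$, and, for each fixed $a\in\R$, the map $z\mapsto\max{'}(a;z)=\1_{\{a>0\}}z+\1_{\{a=0\}}\max(0,z)$ has exactly the same three properties. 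Hence \eqref{eq:dir-der} is again a monotone, coercive, globally Lipschitz equation, uniquely solvable in $H^1_0(\Omega)$ by the Browder--Minty theorem --- precisely the well-posedness needed for $S'(u;h)$ to be well-defined --- and, viewed as the Poisson equation $-\Delta\delta_h=h-\max{'}(y_u;\delta_h)$ with right-hand side in $L^2(\Omega)\hookrightarrow L^p(\Omega)$ for some $p>N/2$ (recall $N\in\{2,3\}$), it has a solution in $C(\overline\Omega)$, indeed in $C^{0,\theta}(\overline\Omega)$ for some $\theta>0$, by the $L^\infty$- and interior/boundary Hölder regularity of the Laplacian on bounded Lipschitz domains (see, e.g., \cite{CasasTroltzsch2009,Troltzsch2010}); the same applies to \eqref{eq:state}. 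Assertion~\ref{it:Lipschitz} then follows by subtracting the state equations for $u_1,u_2$, testing with $y_{u_1}-y_{u_2}$, and using monotonicity of $\max(0,\cdot)$ and the Poincaré inequality for the $H^1_0(\Omega)$-bound, and by rewriting the difference equation as a Poisson equation with right-hand side bounded in $L^2(\Omega)$ by $C\norm{u_1-u_2}_{L^2(\Omega)}$ (via the $1$-Lipschitz bound on the nonlinearity and the $H^1_0$-estimate just obtained) and applying Stampacchia's $L^\infty$-estimate for the $C(\overline\Omega)$-bound.

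For \ref{it:dir-der}, directional differentiability into $H^1_0(\Omega)$ and the characterization \eqref{eq:dir-der} are quoted, and the well-posedness and $C(\overline\Omega)$-regularity of \eqref{eq:dir-der} were recorded above, so the remaining task is the strong convergence \eqref{eq:weak-strong-Hadamard} for merely weakly convergent $h_k\weakto h$ in $L^2(\Omega)$ and $\{t_k\}\subset c_0^+$. Put $y:=S(u)$, $y_k:=S(u+t_kh_k)$, and $\delta_k:=t_k^{-1}(y_k-y)$. By \ref{it:Lipschitz}, $\{\delta_k\}$ is bounded in $H^1_0(\Omega)\cap C(\overline\Omega)$; since $-\Delta\delta_k=h_k-t_k^{-1}(\max(0,y_k)-\max(0,y))$ has right-hand side bounded in $L^2(\Omega)$ --- using boundedness of $\{h_k\}$ and $|\max(0,y_k)-\max(0,y)|\le|y_k-y|=t_k|\delta_k|$ --- elliptic regularity gives in addition a uniform $C^{0,\theta}(\overline\Omega)$-bound, so $\{\delta_k\}$ is precompact in $L^2(\Omega)$ and in $C(\overline\Omega)$. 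The crucial pointwise estimate, obtained by an elementary case distinction on the signs of $y$ and $y+t_k\delta_k$, is
\[
	\left|\frac{\max(0,\,y+t_k\delta_k)-\max(0,y)}{t_k}-\max{'}(y;\delta_k)\right|\le|\delta_k|\,\1_{\{0<|y|<t_k|\delta_k|\}}\qquad\text{a.e. in }\Omega,
\]
which, combined with $\norm{\delta_k}_{C(\overline\Omega)}\le M$ and the fact that $\meas(\{0<|y|<t_kM\})\to0$ as $k\to\infty$ (since $t_kM\to0$), gives $t_k^{-1}(\max(0,y_k)-\max(0,y))-\max{'}(y;\delta_k)\to0$ in $L^2(\Omega)$. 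Passing to a subsequence with $\delta_k\weakto\delta$ in $H^1_0(\Omega)$ and $\delta_k\to\delta$ in $L^2(\Omega)$ and in $C(\overline\Omega)$, and using $\max{'}(y;\delta_k)\to\max{'}(y;\delta)$ in $L^2(\Omega)$ by the $1$-Lipschitz bound, one passes to the limit in the weak form of the $\delta_k$-equation and identifies $\delta$ as the unique solution $\delta_h:=S'(u;h)$ of \eqref{eq:dir-der}; uniqueness then forces the whole sequence to converge. Strong $H^1_0(\Omega)$-convergence follows by testing the equation for $\delta_k-\delta_h$ with itself --- the nonlinear term is nonnegative by monotonicity of $z\mapsto\max{'}(y;z)$, the error term just bounded tends to zero, and $\langle h_k-h,\delta_k-\delta_h\rangle_{L^2(\Omega)}\to0$ since $h_k-h\weakto0$ while $\delta_k-\delta_h\to0$ in $L^2(\Omega)$ --- while $C(\overline\Omega)$-convergence is immediate from the $C^{0,\theta}(\overline\Omega)$-precompactness and uniqueness of the limit.

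Assertions \ref{it:max-pri} and \ref{it:wlsc} are short. For \ref{it:max-pri}, testing \eqref{eq:dir-der} with $-\delta_h^{-}$ gives $\norm{\nabla\delta_h^{-}}_{L^2(\Omega)}^2$ from the Dirichlet term, the nonnegative contribution $\int_\Omega\1_{\{y_u>0\}}(\delta_h^{-})^2\dx$ from the nonlinear term (since $\max{'}(y_u;\delta_h)=\1_{\{y_u>0\}}\delta_h$ on $\{\delta_h<0\}$), and $-\int_\Omega h\,\delta_h^{-}\dx\le0$ on the right-hand side, whence $\delta_h^{-}=0$. For \ref{it:wlsc}, with $\delta_n:=S'(u;h_n)$ and $h_n\weakto h$, the sequence $\{\delta_n\}$ is bounded in $H^1_0(\Omega)\cap C^{0,\theta}(\overline\Omega)$ --- hence precompact in $L^2(\Omega)$ and $C(\overline\Omega)$ --- exactly as in \ref{it:dir-der}; every subsequential limit solves \eqref{eq:dir-der} with datum $h$ (again by the $1$-Lipschitz property of $\max{'}(y_u;\cdot)$) and so equals $S'(u;h)$ by uniqueness, and testing the difference equation with $\delta_n-S'(u;h)$ (monotonicity makes the nonlinear term nonnegative; $\langle h_n-h,\delta_n-S'(u;h)\rangle_{L^2(\Omega)}\to0$) together with the Hölder precompactness yields strong convergence in $H^1_0(\Omega)$ and in $C(\overline\Omega)$.

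I expect the heart of the proof, and its main obstacle, to be the strong convergence \eqref{eq:weak-strong-Hadamard} for weakly-only convergent directions: the passage to the limit in the nonlinear difference quotient relies on the displayed pointwise estimate and on controlling the thin \emph{almost-contact} layer $\{0<|y|<t_k|\delta_k|\}$, which is exactly where the uniform $L^\infty$ (indeed Hölder) a priori bound on $\delta_k$ is indispensable, since it confines the layer to $\{0<|y|<t_kM\}$, a set of vanishing measure; one must also take care that only the datum $h_k$ --- not the nonlinear term --- is allowed to converge merely weakly. A secondary delicate point is upgrading $H^1_0(\Omega)$-convergence to $C(\overline\Omega)$-convergence in spite of this, which is resolved by the uniform Hölder estimate up to the Lipschitz boundary. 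The remaining ingredients are standard monotone-operator and comparison-principle arguments.
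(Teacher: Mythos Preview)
Your proof is correct and complete, but the route differs from the paper's in two places worth noting.

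For \ref{it:dir-der}, the paper simply quotes \cite[Thm.~2.2]{Constantin2017} for the full weak--strong Hadamard property in $H^1_0(\Omega)$ (i.e., \eqref{eq:weak-strong-Hadamard} with the $H^1_0$-topology, already for $h_k\rightharpoonup h$), and then upgrades only the $C(\overline\Omega)$-convergence. For this upgrade it invokes \cite[Lem.~3.5]{ClasonNhuRosch} as a black box for the pointwise convergence of the nonlinear quotient, and then uses the compact embedding $L^2(\Omega)\Subset W^{-1,p}(\Omega)$ for some $p>N$ together with the Stampacchia $L^\infty$-estimate rather than H\"older compactness. Your argument is more self-contained: you supply an explicit pointwise bound on the quotient error (the $\1_{\{0<|y|<t_k|\delta_k|\}}$ estimate), use uniform $C^{0,\theta}(\overline\Omega)$-regularity and Arzel\`a--Ascoli for precompactness in $C(\overline\Omega)$, and then recover strong $H^1_0$-convergence by monotonicity. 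Both work; the paper trades explicitness for brevity by citing, while you avoid the external lemma.

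For \ref{it:wlsc}, the paper derives the quantitative estimate $\norm{\delta_n-\delta}_{H^1_0(\Omega)}\le C\norm{h_n-h}_{H^{-1}(\Omega)}$ by writing the difference equation with an explicit coefficient and testing separately with $(\delta_n-\delta)^{+}$ and $(\delta-\delta_n)^{+}$ to handle the sign structure of $\max'$; it then bootstraps to $L^\infty$ via Stampacchia. Your compactness-plus-uniqueness argument is shorter and perfectly adequate for the stated conclusion, but it does not yield the quantitative $H^{-1}\to H^1_0$ bound, which the paper's proof gives as a by-product.
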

\begin{proof}
	\emph{Ad (i):}
	Thanks to \cite[Prop.~2.1]{Constantin2017}, $S$ is globally Lipschitz continuous as a function form $L^2(\Omega)$ to $H^1_0(\Omega)$. 
	To prove assertion \ref{it:Lipschitz}, it remains to show that there is a constant $C>0$ satisfying
	\begin{equation}
		\label{eq:Lipschitz}
		\norm{S(u_1) - S(u_2)}_{C(\overline\Omega)} \leq C \norm{u_1 -u_2}_{L^2(\Omega)} \quad \text{for all } u_1, u_2 \in L^2(\Omega).
	\end{equation}		
	To this end, we subtract the state equations corresponding to $u_1$ and $u_2$ and thus obtain
	\begin{equation*}
		\left\{
		\begin{aligned}
 			-\Delta (y_1 - y_2)  & = u_1 - u_2 - \left( \max(0,y_1)  - \max(0,y_2) \right) && \text{in } \Omega, \\
			y_1 - y_2 &=0 && \text{on } \partial\Omega
		\end{aligned}
		\right.
	\end{equation*}
	with $y_1 := S(u_1)$ and $y_2 := S(u_2)$. Since $L^2(\Omega), C(\overline{\Omega}) \hookrightarrow W^{-1,p}(\Omega)$ for some $p > N$, the right-hand side of the above equation belongs to $W^{-1,p}(\Omega)$. 
	Applying the Stampacchia Theorem \cite[Thm.~12.4]{Chipot2009} and employing the continuous embedding $L^2(\Omega) \hookrightarrow W^{-1,p}(\Omega)$ and the global Lipschitz continuity of the max-function, we derive
	\begin{align*}
		\norm{y_1 - y_2}_{L^\infty(\Omega)} & \leq C_{\Omega, N,p} \left( \norm{u_1-u_2}_{W^{-1,p}(\Omega)} + \norm{\max(0,y_1)  - \max(0,y_2)}_{W^{-1,p}(\Omega)} \right)\\
		& \leq  C_{\Omega, N,p} \left( \norm{u_1-u_2}_{L^2(\Omega)} + \norm{\max(0,y_1)  - \max(0,y_2)}_{L^2(\Omega)} \right) \\
		& \leq C_{\Omega, N,p} \left( \norm{u_1-u_2}_{L^2(\Omega)} + \norm{y_1 - y_2}_{L^2(\Omega)} \right) \\
		& \leq C_{\Omega, N,p} \left( \norm{u_1-u_2}_{L^2(\Omega)} + \norm{y_1 - y_2}_{H^1_0(\Omega)} \right),
	\end{align*}
	where we have just used the continuous embedding $H^1_0(\Omega) \hookrightarrow L^2(\Omega)$ to get the last estimate. From this and the global Lipschitz continuity of $S: L^2(\Omega) \to H^1_0(\Omega)$, we derive \eqref{eq:Lipschitz}.
	
	\emph{Ad (ii):} Due to \cite[Thm.~2.2]{Constantin2017}, $S$ is Hadamard directionally differentiable as a mapping from $L^2(\Omega)$ to $H^1_0(\Omega)$ and its directional derivative fulfills \eqref{eq:weak-strong-Hadamard} as well as the $H^1_0(\Omega)$-strong convergence in \eqref{eq:weak-strong-Hadamard} is valid. It suffices to prove the $C(\overline\Omega)$-strong convergence in \eqref{eq:weak-strong-Hadamard}.   To that end, setting $u_k := u + t_kh_k$, $y_k :=S(u_k)$, and $y := S(u)$, and
	then subtracting equations for $y$ and $\delta_h$ from the one for $y_k$ 
	produces
	\begin{equation*}
		\left\{
		\begin{aligned}
		- \Delta \left( \frac{y_k - y}{t_k} - \delta_h \right)   & = h_k - h - \left( \frac{\max(0,y_k)  - \max(0,y)}{t_k} - \max{'}(y;\delta_h)  \right) && \text{in } \Omega, \\
		\frac{y_k - y}{t_k} - \delta_h  &=0 && \text{on } \partial\Omega.
		\end{aligned}
		\right.		
	\end{equation*}
	Since $\frac{y_k -y}{t_k}$ converges to $\delta_h$ in $H^1_0(\Omega)$, there exists a subsequence, denoted in the same way, such  that $\frac{y_k -y}{t_k} \to \delta_h$ a.e. in $\Omega$, which together with \cite[Lem.~3.5]{ClasonNhuRosch} yields
	\begin{equation*}
		\frac{\max(0,y_k)  - \max(0,y)}{t_k} - \max{'}(y;\delta_h) \to 0 \quad \text{a.e. in } \Omega.
	\end{equation*}
	The Lebesgue dominated convergence theorem then implies that $\frac{\max(0,y_k)  - \max(0,y)}{t_k} - \max{'}(y;\delta_h) \to 0$ in $L^s(\Omega)$ for all $s \geq 1$, in particular for $s=2$. Since $h_k \rightharpoonup h$ in $L^2(\Omega)$ and $L^2(\Omega) \Subset W^{-1,p}(\Omega)$ for some $p >N$, $h_k \to h$ strongly in $W^{-1,p}(\Omega)$ and thus 
	\begin{equation*}
		h_k - h - \left( \frac{\max(0,y_k)  - \max(0,y)}{t_k} - \max{'}(y;\delta_h)  \right) \to 0 \quad \text{strongly in } W^{-1,p}(\Omega).
	\end{equation*}
	The Stampacchia Theorem thus yields $\frac{y_k - y}{t_k} - \delta_h \to 0$ in $L^\infty(\Omega)$,
	which together with a subsequence-subsequence argument gives the $C(\overline\Omega)$-strong convergence in \eqref{eq:weak-strong-Hadamard}.
			
	\emph{Ad (iii):} Let $u, h \in L^2(\Omega)$ be arbitrary such that $h \geq 0$ a.e. in $\Omega$. Setting $y:= S(u)$ and $\delta_h := S'(u;h)$, and testing \eqref{eq:dir-der} by $\delta_h^{-}$, we have
	\[
		- \norm{\nabla \delta_h^{-}}_{L^2(\Omega)}^2 - \norm{  \1_{ \{y>0 \} }  \delta_h^{-}}_{L^2(\Omega)}^2 = \int_\Omega h \delta_h^{-}dx \geq 0.
	\] 
	This implies that $\delta_h^{-} =0$ a.e. in $\Omega$, i.e., $\delta_h \geq 0$ a.e. in $\Omega$.
	
	\emph{Ad (iv):} Take $u \in L^2(\Omega)$ and $h_n \rightharpoonup h$ in $L^2(\Omega)$ and set $\delta_n := S'(u; h_n)$, $\delta := S'(u;h)$. Subtracting the equations for $\delta_n$ and $\delta$ yields
	\begin{equation} \label{eq:ws-convergence}
		\left\{
		\begin{aligned}
			-\Delta(\delta_n - \delta) + \left[ \1_{\{ y_u >0\}}  + \1_{\{ y_u =0, \delta >0 \}}  \right](\delta_n - \delta)    & = \xi_n  && \text{in } \Omega, \\
		\delta_n - \delta  &=0 && \text{on } \partial\Omega
		\end{aligned}
		\right.			
	\end{equation}
	with $\xi_n :=  h_n - h + \1_{\{ y_u =0\}}\left( - \1_{\{\delta_n > 0 \}} + \1_{\{\delta >0 \}}  \right)\delta_n$ and $y_u := S(u)$. 
	Moreover, we have for a.e. in $\Omega$ that
	\begin{align*}
		\left( - \1_{\{\delta_n > 0 \}} + \1_{\{\delta >0 \}}  \right) = \left( - \1_{\{\delta_n > 0, \delta \leq  0 \}} + \1_{\{\delta >0, \delta_n \leq 0 \}}  \right)
	\end{align*}
	and thus that
	\begin{align*}
		&\left( - \1_{\{\delta_n > 0 \}} + \1_{\{\delta >0 \}}  \right)\delta_n (\delta_n -\delta)^{+} = - \1_{\{\delta_n > 0, \delta \leq 0 \}}\delta_n (\delta_n -\delta)^{+} \leq 0
	\end{align*}
	for a.e. in $\Omega$.
	Testing \eqref{eq:ws-convergence} by $(\delta_n - \delta)^{+}$ and applying the
	Cauchy--Schwarz inequality thus gives
	\begin{align*}
		\norm{\nabla (\delta_n - \delta)^{+}}_{L^2(\Omega)}^2  &  \leq \norm{h_n-h}_{H^{-1}(\Omega)}\norm{(\delta_n -\delta)^{+}}_{H^1_0(\Omega)},
	\end{align*}
	which, in association with the  Poincar\'{e} inequality, yields
	\begin{equation*}
		\norm{(\delta_n - \delta)^{+}}_{H^1_0(\Omega)} \leq C \norm{h_n-h}_{H^{-1}(\Omega)}
	\end{equation*}
	for some constant $C$ not depending on $n$. Similarly, there holds
	\begin{equation*}
		\norm{(\delta - \delta_n)^{+}}_{H^1_0(\Omega)} \leq C \norm{h-h_n}_{H^{-1}(\Omega)}.
	\end{equation*}
	We then have
	\begin{equation}
		\label{eq:ws-H1}
		\norm{\delta_n - \delta}_{H^1_0(\Omega)} \leq C \norm{h_n-h}_{H^{-1}(\Omega)}.
	\end{equation}
	On the other hand, since $L^2(\Omega) \Subset W^{-1,p}(\Omega)$ for some $p>N$, the right-hand side of \eqref{eq:ws-convergence} belongs to $W^{-1,p}(\Omega)$. The Stampacchia Theorem then implies that
	\begin{equation*}
		\norm{\delta_n - \delta}_{L^\infty(\Omega)} \leq C \left( \norm{h_n -h}_{W^{-1,p}(\Omega)} + \norm{\1_{\{ y_u =0\}}\left( - \1_{\{\delta_n > 0 \}} + \1_{\{\delta >0 \}}  \right)\delta_n}_{W^{-1,p}(\Omega)} \right).
	\end{equation*}
	A simple computation yields
	\begin{align*}
		\left| \left( - \1_{\{\delta_n > 0 \}} + \1_{\{\delta >0 \}}  \right) \delta_n \right| &\leq   \1_{\{\delta_n > 0, \delta \leq  0 \}}|\delta_n | + \1_{\{\delta >0, \delta_n \leq 0 \}}  |\delta_n |\\
		& \leq \left( \1_{\{\delta_n > 0, \delta \leq  0 \}} + \1_{\{\delta >0, \delta_n \leq 0 \}} \right)  |\delta_n -\delta |
	\end{align*}
	and we therefore have
	\begin{equation*}
		\norm{\1_{\{ y_u =0\}}\left( - \1_{\{\delta_n > 0 \}} + \1_{\{\delta >0 \}}  \right)\delta_n}_{W^{-1,p}(\Omega)}  \leq C \norm{\delta_n - \delta}_{W^{-1,p}(\Omega)}  \leq C \norm{\delta_n - \delta}_{L^{2}(\Omega)}. 
	\end{equation*}
	There thus holds
	\begin{equation*}
		\norm{\delta_n - \delta}_{L^\infty(\Omega)} \leq C \left( \norm{h_n -h}_{W^{-1,p}(\Omega)} + \norm{\delta_n - \delta}_{L^{2}(\Omega)} \right).
	\end{equation*}
	Combining this with \eqref{eq:ws-H1} produces
	\begin{equation*} 
		\norm{\delta_n - \delta}_{L^\infty(\Omega)} \leq C \left( \norm{h_n -h}_{W^{-1,p}(\Omega)} + \norm{h_n-h}_{H^{-1}(\Omega)} \right).
	\end{equation*}
	From this and \eqref{eq:ws-H1}, we derive the desired convergence.
\end{proof}

As a result of \cref{prop:control-to-state}, the following corollary provides a precise characterization of points at which $S$ is G\^{a}teaux-differentiable.
\begin{corollary}[{\cite[cf.][Cor.~2.3]{Constantin2017}}] \label{cor:G-diff-char-S}
	Let $u$ be an arbitrary, but fixed point in $L^2(\Omega)$. Then,
	$S: L^2(\Omega) \to H^1_0(\Omega) \cap C(\overline\Omega)$ is G\^{a}teaux-differentiable in  $u$ if and only if
	\begin{equation*}
		\meas\left( \{S(u)  = 0 \} \right) = 0.
	\end{equation*}
\end{corollary}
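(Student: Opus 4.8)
The plan is to read the characterization off \cref{prop:control-to-state}. Write $y_u := S(u)$, $\omega := \{y_u = 0\}$, and for $h \in L^2(\Omega)$ set $\delta_h := S'(u;h)$, the unique element of $H^1_0(\Omega)\cap C(\overline\Omega)$ solving, by \cref{prop:control-to-state}~\ref{it:dir-der},
\begin{equation*}
  -\Delta\delta_h + \1_{\{y_u>0\}}\delta_h + \1_\omega\max(0,\delta_h) = h \ \text{ in } \Omega, \qquad \delta_h = 0 \ \text{ on } \partial\Omega .
\end{equation*}
Since $S$ is Hadamard directionally differentiable at $u$ with $S'(u;\cdot)$ continuous from $L^2(\Omega)$ to $H^1_0(\Omega)\cap C(\overline\Omega)$ (\cref{prop:control-to-state}~\ref{it:dir-der} and \ref{it:wlsc}), and $S'(u;\cdot)$ is positively homogeneous, the operator $S$ is G\^{a}teaux-differentiable at $u$ (with derivative $S'(u;\cdot)$) if and only if the map $h\mapsto\delta_h$ is linear. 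If $\meas(\omega)=0$, the nonlinear term $\1_\omega\max(0,\delta_h)$ vanishes almost everywhere, so $\delta_h$ is the unique solution of the \emph{linear} problem $-\Delta\delta_h + \1_{\{y_u>0\}}\delta_h = h$ in $\Omega$, $\delta_h=0$ on $\partial\Omega$; as $\1_{\{y_u>0\}}\geq 0$, this solution depends linearly on $h$, and hence $S$ is G\^{a}teaux-differentiable at $u$.

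For the converse I would argue by contraposition: assuming $\meas(\omega)>0$, I exhibit a direction in which $S'(u;\cdot)$ fails to be odd. Take $h\equiv 1\in L^2(\Omega)$ and put $\delta_1:=S'(u;1)$, $\delta_{-1}:=S'(u;-1)$; by the maximum principle \cref{prop:control-to-state}~\ref{it:max-pri} one has $\delta_1\geq 0$ and $\delta_{-1}\leq 0$ a.e.\ in $\Omega$. Were $h\mapsto\delta_h$ linear, then $\delta_{-1}=S'(u;1-1)-S'(u;1)=-\delta_1$; substituting $\delta_{-1}=-\delta_1$ into the equation for $\delta_{-1}$ and adding the equation for $\delta_1$ gives $\max{'}(y_u;\delta_1)+\max{'}(y_u;-\delta_1)=0$ a.e., and since $\max{'}(y;z)=\1_{\{y>0\}}z+\1_{\{y=0\}}\max(0,z)$ the left-hand side equals $\1_\omega|\delta_1|$, forcing $\delta_1=0$ a.e.\ on $\omega$. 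On the other hand, using $\delta_1\geq 0$ the equation for $\delta_1$ reads $-\Delta\delta_1 + c\,\delta_1 = 1$ in $\Omega$, $\delta_1=0$ on $\partial\Omega$, with $c:=\1_{\{y_u\geq 0\}}\in L^\infty(\Omega)$, $c\geq 0$; since the right-hand side is positive, $\delta_1\not\equiv 0$, and the strong maximum principle yields $\delta_1>0$ throughout the connected open set $\Omega\supseteq\omega$ — contradicting $\meas(\omega)>0$. Hence $h\mapsto\delta_h$ is not linear and $S$ is not G\^{a}teaux-differentiable at $u$, which completes the proof.

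The step I expect to be the crux is the converse: one must produce a single direction whose derivative is nonzero on a positive-measure part of $\omega$. The choice $h\equiv 1$ (or $h=\1_\omega$) is convenient precisely because, by \cref{prop:control-to-state}~\ref{it:max-pri}, the corresponding $\delta_h$ is nonnegative, which collapses $\max(0,\delta_h)$ and turns its governing equation into a linear elliptic equation with a bounded nonnegative zeroth-order coefficient; the strong maximum principle (equivalently, the interior weak Harnack inequality) then applies on the merely Lipschitz — indeed arbitrary bounded connected open — domain $\Omega$ and gives the strict positivity. The algebraic identity $\max{'}(y_u;z)+\max{'}(y_u;-z)=\1_\omega|z|$ is what translates the loss of oddness of $S'(u;\cdot)$ into a statement about the zero set $\omega$.
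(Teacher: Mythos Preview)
Your argument is correct. The paper itself does not spell out a proof but simply refers to \cite[Cor.~2.3]{Constantin2017}. Judging from the technique used later in the proof of \cref{thm:G-diff-characterization}, the route in that reference presumably manufactures directions from smooth test functions: for $\varphi\in C^\infty(\R^N)$ set $h_\pm:=-\Delta(\pm\varphi)+\max{'}(y_u;\pm\varphi)$, so that $S'(u;h_\pm)=\pm\varphi$; linearity of $S'(u;\cdot)$ would force $S'(u;h_++h_-)=0$ and hence $h_++h_-=\1_\omega\varphi=0$ a.e., which fails once $\varphi$ is chosen positive on a ball meeting $\omega$ in positive measure. Your approach is genuinely different: you fix $h\equiv 1$, use \cref{prop:control-to-state}~\ref{it:max-pri} to collapse the nonlinearity, and then invoke the strong maximum principle (weak Harnack inequality for $H^1$ supersolutions with bounded nonnegative zeroth-order coefficient) to obtain $\delta_1>0$ in the connected open set $\Omega$. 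This trades the smooth-cutoff construction for a sharper PDE positivity tool; both routes are short, and yours has the pleasant feature that the single direction $h=1$ works uniformly, without locating a Lebesgue point of $\omega$. The only tacit hypothesis you rely on is that a ``domain'' is connected, which is the standard convention in this setting.
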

\begin{proof}
	The proof of this result is similar to the one of \cite[Cor.~2.3]{Constantin2017}.
\end{proof}

\subsection{Characterization of G\^{a}teaux-differentiability of $j$} \label{sec:G-diff}
In this subsection we shall derive the characterization of G\^{a}teaux-differentiability of the objective functional $j$ that plays an important role in establishing the second-order optimality conditions for \eqref{eq:P}.

We start by defining the solution operators of linear elliptic PDEs. 
\begin{definition} \label{def:sol-oper-adjoint}
	For a given $\chi \in L^\infty(\Omega)$ satisfying $\chi \geq 0$ a.e. in $\Omega$, we define the operator $G_\chi \in \Linop(L^2(\Omega), H^1_0(\Omega) \cap C(\overline\Omega) )$ as follows: for any $h \in L^2(\Omega)$, $z:= G_\chi h$ is the unique solution in $H^1_0(\Omega) \cap C(\overline\Omega)$ to the linear elliptic PDE
	\begin{equation} \label{eq:G-oper}
		\left\{
		\begin{aligned}
			-\Delta z  + \chi z & = h && \text{in } \Omega, \\
			z &=0 && \text{on } \partial\Omega.
		\end{aligned}
		\right.
	\end{equation} 
\end{definition}
The following result precisely characterizes the G\^{a}teaux-differentiable points where the objective functional $j$ is G\^{a}teaux-differentiable.
\begin{theorem}[Characterization of G\^{a}teaux-differentiability of $j$] \label{thm:G-diff-characterization}
	Let $u$ be  arbitrary, but fixed in $L^2(\Omega)$ and let $p:= G_{\1_{\{y_u \geq 0\}}}\left(L'_y(\cdot, y_u) \right)$ with $y_u := S(u)$. Then the following assertions are equivalent:
	\begin{enumerate}[label= (\roman*)]
		\item \label{it:G-diff} 
		$j$ is G\^{a}teaux-differentiable in $u$ and
		\begin{equation}
			\label{eq:diff-formula}
			j{'}(u)h = \int_\Omega (p + \nu u)h \dx 
		\end{equation}
		for all $h \in L^2(\Omega)$.
		\item \label{it:G-diff-char}
		$p$ vanishes a.e. on $\{y_u =0 \}$, i.e.,
		\begin{equation}
			\label{eq:G-diff-char}
			\meas\left( \{y_u = 0 \} \cap \{ p \neq 0\} \right) = 0.
		\end{equation}
	\end{enumerate}
	Moreover, if \eqref{eq:G-diff-char} is fulfilled, then there holds 
	\begin{equation} \label{eq:p-identity}
		p = G_\chi \left(L'_y(\cdot, y_u) \right)
	\end{equation}
	for all $\chi \in L^\infty(\Omega)$ with $\chi(x) \in \partial_C\max(0,y_u(x))$ a.e. in $x \in \Omega$.
\end{theorem}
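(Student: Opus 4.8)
The plan is to establish the equivalence \ref{it:G-diff} $\Leftrightarrow$ \ref{it:G-diff-char} by computing the one-sided directional derivatives of $j$ and then identifying when these are linear in the direction. First I would observe that, by the chain rule and \cref{prop:control-to-state}\ref{it:dir-der}, $j$ is Hadamard directionally differentiable at $u$ in every direction $h$, with
\begin{equation*}
	j'(u;h) = \int_\Omega L'_y(x,y_u)\,\delta_h \dx + \nu\int_\Omega u h \dx,
\end{equation*}
where $\delta_h = S'(u;h)$ solves \eqref{eq:dir-der}. The key technical device is to introduce the adjoint state $p = G_{\1_{\{y_u\geq 0\}}}(L'_y(\cdot,y_u))$ from the statement and to test its defining equation \eqref{eq:G-oper} against $\delta_h$ while testing \eqref{eq:dir-der} against $p$. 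Using $\max'(y_u;\delta_h) = \1_{\{y_u>0\}}\delta_h + \1_{\{y_u=0\}}\max(0,\delta_h)$, the integration-by-parts identity becomes
\begin{equation*}
	\int_\Omega L'_y(x,y_u)\,\delta_h \dx = \int_\Omega p h \dx + \int_{\{y_u=0\}} p\bigl(\delta_h - \max(0,\delta_h)\bigr)\dx = \int_\Omega p h \dx - \int_{\{y_u=0\}} p\,\delta_h^{-} \dx,
\end{equation*}
so that
\begin{equation*}
	j'(u;h) = \int_\Omega (p + \nu u) h \dx - \int_{\{y_u=0\}} p\,\delta_h^{-} \dx.
\end{equation*}
This is the central formula; everything else follows from analyzing the correction term $\int_{\{y_u=0\}} p\,\delta_h^{-}\dx$.

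For the direction \ref{it:G-diff-char} $\Rightarrow$ \ref{it:G-diff}: if $p$ vanishes a.e.\ on $\{y_u=0\}$, the correction term is identically zero for every $h$, so $j'(u;h) = \int_\Omega (p+\nu u)h\dx$, which is linear and continuous in $h$; hence $j$ is G\^ateaux-differentiable with the asserted formula \eqref{eq:diff-formula}. For the converse \ref{it:G-diff} $\Rightarrow$ \ref{it:G-diff-char}, I would argue by contradiction: suppose $\meas(\{y_u=0\}\cap\{p\neq 0\})>0$. Then at least one of $\{y_u=0\}\cap\{p>0\}$ or $\{y_u=0\}\cap\{p<0\}$ has positive measure; assume the former, call it $\omega$. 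Choosing $h = -\1_\omega$ (or a suitable $L^2$-perturbation if needed to stay in $L^2$), the maximum-principle-type estimate from testing \eqref{eq:dir-der} shows $\delta_h \leq 0$, and one can arrange $\delta_h^{-} = -\delta_h > 0$ on a set of positive measure inside $\omega$ where $p>0$ — more carefully, since $p$ is continuous (as $G_\chi$ maps into $C(\overline\Omega)$) one localizes to an open subset where $p$ is bounded below by a positive constant. Comparing $j'(u;h)$ and $-j'(u;-h)$: if $j$ were G\^ateaux-differentiable, these would be equal, but the formula gives $j'(u;h) + j'(u;-h) = -\int_{\{y_u=0\}}p(\delta_h^{-} + \delta_{-h}^{-})\dx \neq 0$ since $\delta_{-h} = -\delta_h$ forces $\delta_{-h}^{-} = \delta_h^{+}$ and one of the two is strictly positive on a set where $p$ has a fixed sign. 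This contradicts linearity of $h\mapsto j'(u;h)$.

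For the final claim \eqref{eq:p-identity}: under \eqref{eq:G-diff-char}, for any $\chi$ with $\chi(x)\in\partial_C\max(0,y_u(x))$ a.e., we have $\partial_C\max(0,s) = \{1\}$ for $s>0$, $\{0\}$ for $s<0$, and $[0,1]$ for $s=0$; hence $\chi = \1_{\{y_u>0\}}$ a.e.\ on $\{y_u\neq 0\}$, while on $\{y_u=0\}$ the values of $\chi$ and of $\1_{\{y_u\geq 0\}}$ may differ but both $p$ and $G_\chi(L'_y(\cdot,y_u))$ must vanish there. More precisely, set $q := G_\chi(L'_y(\cdot,y_u))$ and $r := p - q$; subtracting the two defining PDEs gives $-\Delta r + \1_{\{y_u\geq 0\}} r = (\chi - \1_{\{y_u\geq 0\}})q = \1_{\{y_u=0\}}(\chi - 1)q$ since $\chi = \1_{\{y_u\geq 0\}}$ off $\{y_u=0\}$. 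The right-hand side is supported on $\{y_u=0\}$; I would first show $q$ vanishes a.e.\ on $\{y_u=0\}$ by the same contradiction argument applied to the functional $u\mapsto j(u)$ with $G_\chi$ in place of $G_{\1_{\{y_u\geq 0\}}}$ — or, more directly, observe the analogous Taylor expansion with $\chi$ gives the same correction term $\int_{\{y_u=0\}} q\,\delta_h^{-}\dx$ up to the $\{y_u=0\}$ contributions, and G\^ateaux-differentiability of $j$ (already established) forces $q\equiv 0$ on $\{y_u=0\}$; then the right-hand side above vanishes, so $r$ solves a homogeneous well-posed elliptic equation and $r\equiv 0$, giving \eqref{eq:p-identity}.

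The main obstacle I anticipate is the converse implication \ref{it:G-diff} $\Rightarrow$ \ref{it:G-diff-char}: constructing an admissible direction $h$ whose sensitivity $\delta_h$ has a negative part that is \emph{genuinely} nonzero on a positive-measure subset of $\{y_u=0\}\cap\{p\neq 0\}$ is delicate, because $\delta_h$ on $\{y_u=0\}$ is governed by the obstacle-type relation $\max'(y_u;\delta_h)$ and one must ensure the perturbation does not inadvertently push $\delta_h$ to zero exactly where $p\neq 0$. The clean way around this is to exploit the \emph{positive $1$-homogeneity but non-linearity} of $h\mapsto\delta_h$ together with the continuity of $p$: pick $h = \pm\1_\omega$ with $\omega$ an open set on which $p$ has constant sign and is bounded away from $0$, use the maximum principle (\cref{prop:control-to-state}\ref{it:max-pri}) to control the sign of $\delta_{\pm\1_\omega}$, and derive the contradiction from $j'(u;h) \neq -j'(u;-h)$ as above. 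The remaining steps — the integration-by-parts identity, the Lipschitz/continuity regularity of $p$, and the elementary structure of $\partial_C\max$ — are routine given the results already proven in the excerpt.
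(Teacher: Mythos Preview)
Your derivation of the central identity
\[
  j'(u;h) \;=\; \int_\Omega (p+\nu u)h\,\mathrm{d}x \;-\; \int_{\{y_u=0\}} p\,\delta_h^{-}\,\mathrm{d}x
\]
and the implication \ref{it:G-diff-char} $\Rightarrow$ \ref{it:G-diff} match the paper exactly. The gap is in \ref{it:G-diff} $\Rightarrow$ \ref{it:G-diff-char}.

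\medskip
\textbf{The step that fails.} You assert ``$\delta_{-h} = -\delta_h$''. This is precisely what is \emph{not} true: $h\mapsto S'(u;h)$ is positively homogeneous but not odd, because the linearized operator on $\{y_u=0\}$ sees $\max(0,\delta)$, not $\delta$. Concretely, for $h=-\1_\omega\le 0$ one has $\delta_h\le 0$ and hence $\delta_h$ solves $-\Delta\delta_h+\1_{\{y_u>0\}}\delta_h=h$, whereas $\delta_{-h}\ge 0$ solves $-\Delta\delta_{-h}+\1_{\{y_u\ge 0\}}\delta_{-h}=-h$; these are different equations. Even after correcting this, your strategy of prescribing $h=\pm\1_\omega$ and then reading off $\delta_h$ does not give the contradiction: $\delta_h$ will in general be strictly negative on \emph{all} of $\Omega$ (strong maximum principle), so the correction integral $\int_{\{y_u=0\}}p\,\delta_h^{-}\,\mathrm{d}x$ picks up contributions from both $\{p>0\}$ and $\{p<0\}$ and may vanish by cancellation. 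Localising $\omega$ to an open set where $p>c>0$ does not help, because the integral is over $\{y_u=0\}$, not over $\omega$.

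The paper resolves this by reversing the construction: instead of choosing $h$ and computing $\delta_h$, one \emph{prescribes} $\delta$ and computes the corresponding $h$. Since $p$ is continuous, $\{p>0\}$ is open, so there is $\varphi\in C^\infty(\mathbb R^N)$ with $\varphi>0$ on $\{p>0\}$ and $\varphi\equiv 0$ elsewhere. Setting $h_1:=-\Delta\varphi+\max'(y_u;\varphi)$ and $h_2:=-\Delta(-\varphi)+\max'(y_u;-\varphi)$ yields $\delta_{h_1}=\varphi\ge 0$, $\delta_{h_2}=-\varphi\le 0$, and $h_1+h_2=\1_{\{y_u=0\}}\varphi\ge 0$, so $\delta_{h_1+h_2}\ge 0$. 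Then $T(h_1)=T(h_1+h_2)=0$ while $T(h_2)=-\int_{\{y_u=0\}}p\varphi\,\mathrm{d}x$; linearity of $T$ forces this last integral to vanish, and since $p\varphi>0$ on $\{y_u=0\}\cap\{p>0\}$ the measure is zero. The crucial point is that $\varphi$ is \emph{supported} in $\{p>0\}$, which kills any possible cancellation.

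\medskip
\textbf{On \eqref{eq:p-identity}.} Your route via $q:=G_\chi(L'_y)$, showing $q=0$ on $\{y_u=0\}$, and then proving $p-q$ solves a homogeneous equation is workable but circuitous (and the ``same contradiction argument'' for $q$ would need to be redone with a different correction term involving $\chi$). The paper's argument is one line: once $p=0$ a.e.\ on $\{y_u=0\}$, any $\chi$ with $\chi(x)\in\partial_C\max(0,y_u(x))$ satisfies $\chi p=\1_{\{y_u\ge 0\}}p$ a.e.\ (they agree off $\{y_u=0\}$, and on $\{y_u=0\}$ both sides are zero). Hence $p$ already solves $-\Delta p+\chi p=L'_y(\cdot,y_u)$, so $p=G_\chi(L'_y(\cdot,y_u))$ by uniqueness.
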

\begin{proof}
	Since $S$ is directionally differentiable, so is $j$. For any $u, h \in L^2(\Omega)$, setting $\delta_h := S{'}(u;h)$ and using a simple computation yields
	\begin{equation*}
		j{'}(u;h) = \int_\Omega \left( L'_y(x,y_u)\delta_h + \nu uh \right) \dx.
	\end{equation*} 
	Testing the equation for $p$ by $\delta_h$ gives
	\begin{equation*}
		\int_\Omega \left(\nabla p  \cdot \nabla \delta_h + \1_{\{y_u \geq 0 \} } p\delta_h  \right) \dx = \int_\Omega L'_y(x,y_u)\delta_h \dx.
	\end{equation*}
	Similarly, testing the equation for $\delta_h$ via $p$ yields
	\begin{equation*}
		\int_\Omega \left(\nabla p  \cdot \nabla \delta_h + \1_{\{y_u > 0 \} }  p\delta_h + \1_{\{y_u = 0 \} }  p\delta_h^+  \right) \dx = \int_\Omega ph \dx.
	\end{equation*}
	Subtracting the two above equations, we have
	\begin{equation*}
		\int_\Omega L'_y(x,y_u)\delta_h \dx = \int_\Omega p\left( h - \1_{\{y_u = 0 \}} \delta_h^{-} \right) \dx,
	\end{equation*}
	which gives 
	\begin{equation}
		\label{eq:dire-der-j}
		j{'}(u;h) = \int_\Omega ( p + \nu u)h \dx + T(h)
	\end{equation}
	with
	\begin{equation}
		\label{eq:Th}
		T(h) := -\int_\Omega \1_{\{y_u = 0 \}} p \delta_h^{-}\dx.
	\end{equation}
	
	We now prove the equivalence of \ref{it:G-diff} and \ref{it:G-diff-char}. 
	
	\emph{$\impliedby)$} Assume that \ref{it:G-diff-char} is fulfilled. Then $T(h) =0$ and thus $j{'}(u;h) = \int_\Omega ( p + \nu u)h \dx$ according to \eqref{eq:dire-der-j}. Consequently, $j{'}(u;h)$ is linear in $h$ and $j$ is therefore G\^{a}teaux-differentiable in $u$. We hence derive \ref{it:G-diff}.
	
	\emph{$\implies )$} Assume that $j$ is G\^{a}teaux-differentiable in $u$. Then $T$ is linear. We first show that
	\begin{equation}
		\label{eq:p-vanish-1}
		\meas\left( \{ y_u = 0\} \cap \{p>0 \} \right) = 0.
	\end{equation}
	For this purpose, it suffices to consider the case $\meas\{p>0\} >0$. Since $L'_y(\cdot, y_u) \in L^2(\Omega)$, $p$ is continuous on $\overline{\Omega}$ because of the regularity of solutions of \eqref{eq:G-oper}. Accordingly, the set $\{p>0\}$ is open and thus there exists a $\varphi \in C^{\infty}(\R^N)$ such that 
	\begin{equation*}
		\varphi|_{\{p>0\}} > 0 \quad \text{and} \quad \varphi|_{\R^N \backslash \{p>0\}} = 0
	\end{equation*}	
	(\cite[see][Lem.~A.1]{Constantin2017}).
	Setting 
	\begin{align*}
		h_1 &:= - \Delta \varphi + \max{'}(y_u,\varphi) \quad
		\text{and} \quad
		h_2 :=  - \Delta (-\varphi) + \max{'}(y_u,-\varphi)
	\end{align*}
	yields $h_1 + h_2 = \1_{\{y_u =0\}} \varphi \geq 0$.
	Putting $\delta := S{'}(u;h_1+h_2)$ and applying the maximum principle (see \cref{prop:control-to-state}), we derive $\delta \geq 0$ a.e. in $\Omega$. Obviously, $T(h_1) = 0 = T(h_1+h_2)$ and $T(h_2) = - \int_\Omega \1_{\{y_u = 0 \}} p \varphi \dx$. The linearity of $T$ then implies that 
	\begin{equation*}
		\int_\Omega \1_{\{y_u = 0 \}} p \varphi \dx = 0,
	\end{equation*}
	which, together with the definition of $\varphi$, shows \eqref{eq:p-vanish-1}.
	
	In the same way, we have 
	\begin{equation*}
		\meas\left( \{ y_u = 0\} \cap \{p <0 \} \right) = 0.
	\end{equation*}
	Combining this with \eqref{eq:p-vanish-1} yields \eqref{eq:G-diff-char}.
	
	It remains to prove \eqref{eq:p-identity}. To do this, take any $\chi \in L^\infty(\Omega)$ such that $\chi(x) \in \partial_C \max(0,y_u(x))$ a.e. $x \in \Omega$. By virtue of \eqref{eq:G-diff-char}, we have
	\begin{equation*}
		\1_{\{y_u \geq 0 \}} p  = \chi p \quad \text{a.e. in } \Omega.
	\end{equation*}
	We then derive \eqref{eq:p-identity} from \cref{def:sol-oper-adjoint}.
\end{proof}
\begin{remark} \label{rem:G-diff-char}
	In the next section we shall see under an additional regularity of $\Omega$ and the assumption $\alpha < 0 < \beta$  that the G\^{a}teaux-differentiability characterization \eqref{eq:G-diff-char} is fulfilled at any stationary control point, i.e., the control that fulfills the optimality conditions for \eqref{eq:P}; see \cref{def:C-stationary} and \cref{prop:G-diff-exams} below. As a result, when considering \eqref{eq:P} without control constraints of the form $\alpha \leq u \leq \beta$, the G\^{a}teaux-differentiability of the objective functional is always achieved at any stationary point and thus the stationarity notions in the Clarke-, Bouligand-, and strong senses are equivalent at these points (see \cref{cor:G-diff-no-constraint} below).
\end{remark}

\section{Existence and first-order optimality conditions} \label{sec:1st-OS}
This section is devoted to presenting the existence of global optimal controls and the first-order optimality conditions for \eqref{eq:P} that shall be exploited, in some certain situations, to verify the G\^{a}teaux-differentiability characterization \eqref{eq:G-diff-char}. 
Conversely, the optimality conditions will be relaxed due to the G\^{a}teaux-differentiability of the objective functional $j$.

We first rewrite the optimal control problem \eqref{eq:P} in the form
\begin{equation}\label{eq:P2}
	\tag{P}
	\left\{
	\begin{aligned}
		\min_{u\in L^2(\Omega)} & j(u)  = \int_\Omega L(x, S(u)) \dx + \frac\nu2\norm{u}_{L^2(\Omega)}^2 \\
		\text{s.t.} &  \quad u \in \mathcal{U}_{ad}, 
	\end{aligned}
	\right.
\end{equation}
where the admissible set is defined by 
\begin{equation*}
	\mathcal{U}_{ad} := 
	\left\{
	\begin{aligned}
		&L^2(\Omega) &&\text{if } \alpha = -\infty, \beta = \infty, \\
		&\left\{u \in L^2(\Omega) \,\middle|\, \alpha(x) \leq u(x) \leq \beta(x) \, \text{for a.e. } x \in \Omega  \right\} && \text{if } \alpha,  \beta \in L^2(\Omega). 
	\end{aligned}
	\right.	
\end{equation*}
\begin{proposition} \label{prop:existence-minimizer}
	The optimal control problem \eqref{eq:P} admits at least one global minimizer $\bar u \in \mathcal{U}_{ad}$.
\end{proposition}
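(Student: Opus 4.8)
The plan is to use the direct method of the calculus of variations, treating the two cases in Assumption~\ref{ass:control-constraints} separately. First I would show that $j$ is bounded below on $\mathcal{U}_{ad}$ so that the infimum $m := \inf_{u \in \mathcal{U}_{ad}} j(u)$ is finite. In the constrained case $\alpha,\beta \in L^2(\Omega)$, the admissible set is bounded in $L^2(\Omega)$, and since $S$ is Lipschitz (\cref{prop:control-to-state}\ref{it:Lipschitz}) the states $S(u)$ are uniformly bounded in $C(\overline\Omega)$; together with the bound $|L(x,y)| \le |L(x,0)| + \int_0^{|y|}\phi_M(x)\,ds \le |L(x,0)| + M\phi_M(x) \in L^1(\Omega)$ from Assumption~\ref{ass:integrand}, this gives $m > -\infty$. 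In the unconstrained case, the extra coercivity hypothesis $L(x,y) \ge -\phi(x) - \psi(x)|y|$ with $\phi,\psi \in L^1(\Omega)$, combined with the Tikhonov term $\tfrac{\nu}{2}\norm{u}_{L^2(\Omega)}^2$ and the Lipschitz bound $\norm{S(u)}_{C(\overline\Omega)} \le C\norm{u}_{L^2(\Omega)} + C$, yields $j(u) \ge \tfrac{\nu}{2}\norm{u}_{L^2(\Omega)}^2 - C\norm{u}_{L^2(\Omega)} - C$, so $j$ is coercive and bounded below.

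Next I would take a minimizing sequence $\{u_n\} \subset \mathcal{U}_{ad}$ with $j(u_n) \to m$. In both cases this sequence is bounded in $L^2(\Omega)$: in the constrained case because $\mathcal{U}_{ad}$ itself is bounded, and in the unconstrained case because of the coercivity just established (for $n$ large, $j(u_n) \le m+1$ forces $\norm{u_n}_{L^2(\Omega)}$ bounded). Hence, passing to a subsequence, $u_n \weakto \bar u$ in $L^2(\Omega)$ for some $\bar u \in L^2(\Omega)$. Since $\mathcal{U}_{ad}$ is convex and closed in $L^2(\Omega)$ (clear from its definition; closedness of the box constraint under $L^2$-convergence passes through a.e.\ convergent subsequences), it is weakly closed, so $\bar u \in \mathcal{U}_{ad}$.

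Then I would pass to the limit in $j$. By \cref{prop:control-to-state}, $S$ is weakly-strongly continuous from $L^2(\Omega)$ to $H^1_0(\Omega)\cap C(\overline\Omega)$ — more precisely, one can extract from \eqref{eq:weak-strong-Hadamard} (or argue directly from the equation, as in the Lipschitz proof) that $u_n \weakto \bar u$ implies $S(u_n) \to S(\bar u)$ strongly in $C(\overline\Omega)$; in particular $S(u_n) \to S(\bar u)$ uniformly. Then $L(x, S(u_n)) \to L(x, S(\bar u))$ a.e., and since all $S(u_n)$ are bounded in $C(\overline\Omega)$ by some $M$, the bound $|L(x, S(u_n))| \le |L(x,0)| + M\phi_M(x) \in L^1(\Omega)$ lets the Lebesgue dominated convergence theorem give $\int_\Omega L(x, S(u_n))\dx \to \int_\Omega L(x, S(\bar u))\dx$. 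The Tikhonov term $u \mapsto \tfrac{\nu}{2}\norm{u}_{L^2(\Omega)}^2$ is convex and continuous, hence weakly lower semicontinuous, so $\liminf_n \tfrac{\nu}{2}\norm{u_n}_{L^2(\Omega)}^2 \ge \tfrac{\nu}{2}\norm{\bar u}_{L^2(\Omega)}^2$. Adding the two gives $j(\bar u) \le \liminf_n j(u_n) = m$, and since $\bar u \in \mathcal{U}_{ad}$ we also have $j(\bar u) \ge m$; therefore $\bar u$ is a global minimizer.

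The main obstacle is establishing the strong $C(\overline\Omega)$-convergence $S(u_n) \to S(\bar u)$ from mere weak convergence $u_n \weakto \bar u$, since this is what allows the dominated convergence argument on the (possibly non-convex, only Carath\'eodory) integrand $L$. This, however, is essentially already contained in \cref{prop:control-to-state}: the weak-to-strong continuity of the directional derivative in part \ref{it:wlsc}, and more directly the argument underlying part \ref{it:Lipschitz} together with the compact embedding $L^2(\Omega) \Subset W^{-1,p}(\Omega)$ for $p > N$ and the Stampacchia estimate, show that $u_n \weakto \bar u$ in $L^2(\Omega)$ forces $u_n \to \bar u$ strongly in $W^{-1,p}(\Omega)$, whence $S(u_n) \to S(\bar u)$ in $H^1_0(\Omega)\cap C(\overline\Omega)$ by repeating the subtraction-and-Stampacchia argument from the proof of \cref{prop:control-to-state}\ref{it:Lipschitz}. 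So in practice the proof is a routine application of the direct method once \cref{prop:control-to-state} is invoked.
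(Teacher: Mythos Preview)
Your proposal is correct and follows essentially the same approach as the paper: the direct method, with the constrained case dismissed as standard and the unconstrained case handled by deriving coercivity from the lower bound $L(x,y)\ge -\phi(x)-\psi(x)|y|$ together with $\norm{y_u}_{L^\infty(\Omega)}\le C\norm{u}_{L^2(\Omega)}$ and the Tikhonov term. The paper is terser---after obtaining boundedness of the minimizing sequence it simply writes ``a standard argument then completes the proof''---whereas you spell out the weak-to-strong continuity of $S$ and the dominated-convergence passage, but the structure is the same.
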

\begin{proof}
	The argument of the proof is standard for the case $\alpha, \beta \in L^2(\Omega)$; see, e.g., \cite[Thm.~3.1]{CasasTroltzsch2009}. We thus only consider the situation that $\alpha = -\infty$ and $\beta = \infty$. Let $\{u_k\}$ be a minimizing sequence for \eqref{eq:P}, i.e.,
	\begin{equation*}
		\lim\limits_{k\to \infty } j(u_k) = \inf(\text{P}).
	\end{equation*}
	In view of \cref{ass:standing}, there exist non-negative functions $\phi, \psi \in L^1(\Omega)$ such that
	\begin{equation*}
		L(x,y_u(x)) \geq - \left( \phi(x) + \psi(x)|y_u(x)| \right)
	\end{equation*}
	for a.e. $x \in \Omega$ and for all $u \in L^2(\Omega)$ with $y_u := S(u)$. Besides, there exists a constant $C>0$ independent of $u$ such that
	\begin{equation*}
		\norm{y_u}_{L^\infty(\Omega)} \leq C \norm{u}_{L^2(\Omega)}.
	\end{equation*}
	We thus have for any $u \in L^2(\Omega)$ that
	\begin{equation*}
		j(u) \geq - \left( \norm{\phi}_{L^1(\Omega)} + C \norm{\psi}_{L^1(\Omega)} \norm{u}_{L^2(\Omega)} \right) + \frac{\nu}{2} \norm{u}_{L^2(\Omega)}^2.
	\end{equation*}
	It then follows that $\inf(\text{P}) > -\infty$ and that
	\begin{equation*}
		\liminf_{k \to \infty}\left[ - \left( \norm{\phi}_{L^1(\Omega)} + C \norm{\psi}_{L^1(\Omega)} \norm{u_k}_{L^2(\Omega)} \right) + \frac{\nu}{2} \norm{u_k}_{L^2(\Omega)}^2\right] \leq   \inf(\text{P}),
	\end{equation*}
	which yields the boundedness of a subsequence of $\{u_k\}$ in $L^2(\Omega)$. A standard argument then completes the proof.
\end{proof}

We now give a definition on the notion of stationarity.
\begin{definition} \label{def:C-stationary}
	A feasible point $\bar u \in \mathcal{U}_{ad}$ is said to be a \emph{C-stationary} point of \eqref{eq:P} if there exist an adjoint state $\bar p \in H^1_0(\Omega) \cap C(\overline\Omega)$ and a multiplier $\bar\chi \in L^\infty(\Omega)$ satisfying 
	\begin{subequations}
		\label{eq:1st-OS}
		\begin{align}
		&\left\{
		\begin{aligned}
		-\Delta \bar p + \bar \chi \bar p &= L'_y(x,\bar y) && \text{in } \Omega, \\
		\bar p &=0 && \text{on } \partial\Omega,
		\end{aligned}
		\right.  \label{eq:adjoint_OS} \\
		& \bar \chi(x) \in \partial_C \max(0, \bar y(x)) \quad \text{a.e. } x  \in \Omega, \label{eq:Clarke-grad}
		\intertext{and}
		&\int_\Omega \left(\bar p + \nu \bar u \right)\left( u - \bar u \right) \dx \geq 0 \quad \text{for all } u \in \mathcal{U}_{ad}  \label{eq:normal_OS}
		\end{align}
	\end{subequations}
	with $\bar y:= S(\bar u)$.
\end{definition}

Exploiting regularization and relaxation methods as in, e.g., \cite{Barbu1984}, we derive the stationarity system for \eqref{eq:P}. The same approach to deal with the optimal control governed by non-smooth PDEs was used in \cite{MeyerSusu2017,Constantin2017,ClasonNhuRosch}. 
\begin{theorem}[{\cite[Cor.~4.5 \& Rem.~4.9]{Constantin2017}}]
	\label{thm:1st-OC}
	If $\bar u$ a local minimizer of \eqref{eq:P2}, then $\bar u$ is a C-stationary point.	
\end{theorem}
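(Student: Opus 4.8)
The plan is to follow the classical regularization/relaxation scheme for optimal control of non-smooth PDEs, adapting the argument of \cite{Barbu1984,MeyerSusu2017,ClasonNhuRosch} to the present max-type nonlinearity; this is exactly the route taken in \cite[Cor.~4.5 \& Rem.~4.9]{Constantin2017}, so the proof essentially amounts to invoking that reference after checking that our standing assumptions (\cref{ass:standing}) supply everything needed. Concretely, first I would smooth the max-function: pick a family $\{\max_\epsilon\}_{\epsilon>0}$ of $C^2$ (or at least $C^1$) monotone approximations with $\max_\epsilon \to \max$ uniformly on bounded sets, $0 \le \max_\epsilon' \le 1$, and $\max_\epsilon' \to $ a selection of $\partial_C\max(0,\cdot)$ in a suitable sense; a standard mollification $\max_\epsilon(t) = \int \max(0,t-\epsilon s)\rho(s)\,\mathrm{d}s$ works. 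Then I would introduce the auxiliary control problems
\begin{equation*}
	\min_{u \in \mathcal{U}_{ad}} j_\epsilon(u) := \int_\Omega L(x, y_u^\epsilon)\dx + \frac{\nu}{2}\norm{u}_{L^2(\Omega)}^2 + \frac{1}{2}\norm{u - \bar u}_{L^2(\Omega)}^2,
\end{equation*}
where $y_u^\epsilon$ solves $-\Delta y + \max_\epsilon(y) = u$, and the extra Tikhonov-type term $\tfrac12\norm{u-\bar u}^2$ localizes the minimizers near $\bar u$ (needed because $\bar u$ is only a \emph{local} minimizer of \eqref{eq:P2}). Existence of a minimizer $\bar u_\epsilon$ for the regularized problem follows as in \cref{prop:existence-minimizer}, and since $y_u^\epsilon$ is now Fr\'echet-differentiable in $u$, the standard first-order conditions for smooth control problems give an adjoint state $\bar p_\epsilon \in H^1_0(\Omega) \cap C(\overline\Omega)$ solving $-\Delta \bar p_\epsilon + \max_\epsilon'(\bar y_\epsilon)\bar p_\epsilon = L'_y(x,\bar y_\epsilon)$ together with the variational inequality $\int_\Omega(\bar p_\epsilon + \nu\bar u_\epsilon + (\bar u_\epsilon - \bar u))(u - \bar u_\epsilon)\dx \ge 0$ for all $u \in \mathcal{U}_{ad}$.

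The core of the proof is then the passage to the limit $\epsilon \downarrow 0$. First I would show $\bar u_\epsilon \to \bar u$ strongly in $L^2(\Omega)$: a comparison $j_\epsilon(\bar u_\epsilon) \le j_\epsilon(\bar u) = j(\bar u) + o(1)$ (using $y^\epsilon_{\bar u} \to \bar y$, which comes from uniform convergence of $\max_\epsilon$ and the Stampacchia/energy estimates already used in \cref{prop:control-to-state}) bounds $\{\bar u_\epsilon\}$ in $L^2$; extracting a weakly convergent subsequence $\bar u_\epsilon \weakto \tilde u$, stability of the state equation gives $\bar y_\epsilon \to \tilde y = S(\tilde u)$ in $H^1_0 \cap C(\overline\Omega)$, weak lower semicontinuity plus the coercive localization term force $\tilde u = \bar u$ (here the local optimality of $\bar u$ is essential, exactly as in \cite{Barbu1984}), and then the norm also converges, giving strong convergence. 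Next, the adjoint states: since $\max_\epsilon'(\bar y_\epsilon)$ is bounded in $L^\infty$ by $1$, $\{\bar p_\epsilon\}$ is bounded in $H^1_0(\Omega)$ by an energy estimate and in $C(\overline\Omega)$ by the Stampacchia bound (as $L'_y(x,\bar y_\epsilon)$ is bounded in $L^2$ by $\phi_M$ from \cref{ass:standing}); so along a further subsequence $\bar p_\epsilon \weakto \bar p$ in $H^1_0$ and $\max_\epsilon'(\bar y_\epsilon) \weakto \bar\chi$ weakly-$*$ in $L^\infty$.

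It remains to identify the limit objects and verify \eqref{eq:1st-OS}. Passing to the limit in the regularized adjoint equation (the product $\max_\epsilon'(\bar y_\epsilon)\bar p_\epsilon$ converges in, say, $L^1$ or as a distribution because one factor converges weakly-$*$ in $L^\infty$ and the other strongly in $L^1$ after using compact Sobolev embedding for $\bar p_\epsilon$) yields \eqref{eq:adjoint_OS} with this $\bar\chi$. Passing to the limit in the variational inequality, using $\bar u_\epsilon \to \bar u$ strongly and $\bar u_\epsilon - \bar u \to 0$, gives \eqref{eq:normal_OS}. The one genuinely delicate point — and the main obstacle — is establishing the Clarke-gradient inclusion \eqref{eq:Clarke-grad}: one must show $\bar\chi(x) \in [0,1]$ a.e. (immediate, $[0,1]$ being weakly-$*$ closed and convex), that $\bar\chi = 1$ a.e. on $\{\bar y > 0\}$ and $\bar\chi = 0$ a.e. on $\{\bar y < 0\}$ (because there $\max_\epsilon'(\bar y_\epsilon) \to 1$ resp.\ $0$ pointwise a.e.\ for $\epsilon$ small, using $\bar y_\epsilon \to \bar y$ a.e.\ along a subsequence and the convergence properties of $\max_\epsilon'$), so that the only freedom left is on $\{\bar y = 0\}$ where $\partial_C\max(0,0) = [0,1]$ already contains any admissible value. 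This pointwise-convergence-on-level-sets argument is exactly the kind of computation done in \cite{Constantin2017,ClasonNhuRosch}, and I would either reproduce it briefly or simply cite \cite[Cor.~4.5 \& Rem.~4.9]{Constantin2017}, since the statement as given is attributed verbatim to that work; a one-line proof stating ``This is \cite[Cor.~4.5 \& Rem.~4.9]{Constantin2017} after the observation that \cref{ass:standing} implies the hypotheses used there'' would in fact be adequate here.
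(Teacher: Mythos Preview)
Your proposal is correct and matches the paper's approach: the paper does not give its own proof but simply cites \cite[Cor.~4.5 \& Rem.~4.9]{Constantin2017}, after noting that the result follows from the regularization/relaxation technique of \cite{Barbu1984,MeyerSusu2017,Constantin2017,ClasonNhuRosch}. Your sketch of that regularization argument is accurate and more detailed than what the paper provides, and your final remark that a one-line citation suffices is exactly what the paper does.
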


\begin{proposition} 	\label{prop:G-diff-exams}
	Assume that $\Omega$ either is convex or has a $C^{1,1}$-boundary and that $\alpha(x) <0 < \beta(x)$ for a.e. $x \in \Omega$. Let $\bar u  \in \mathcal{U}_{ad}$ and $\bar p \in L^2(\Omega)$ be arbitrary and satisfy \eqref{eq:normal_OS}. Then, there holds
	\begin{equation*}
		\meas\left( \{ \bar y = 0 \} \cap \{ \bar p \neq 0 \} \right) = 0
	\end{equation*}
	with $\bar y:= S(\bar u)$.
\end{proposition}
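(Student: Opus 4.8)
The plan is to use the geometry of $\Omega$ to upgrade the regularity of $\bar y$ to $H^2(\Omega)$, then invoke the classical fact that the Laplacian of an $H^2$-function vanishes almost everywhere on its level sets in order to conclude that $\bar u = 0$ a.e.\ on $\{\bar y = 0\}$, and finally read off $\bar p = 0$ there from the pointwise form of the variational inequality \eqref{eq:normal_OS}, where the sign condition $\alpha < 0 < \beta$ enters.

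First I would observe that $\bar y := S(\bar u) \in H^1_0(\Omega) \cap C(\overline\Omega)$, so $\bar y^+ = \max(0,\bar y) \in C(\overline\Omega) \subset L^2(\Omega)$ and hence $-\Delta \bar y = \bar u - \bar y^+ \in L^2(\Omega)$ together with the homogeneous Dirichlet condition. Since $\Omega$ is convex or has a $C^{1,1}$-boundary, standard elliptic regularity then yields $\bar y \in H^2(\Omega)$.

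Next comes the key step. By Stampacchia's lemma applied to $\bar y$ one has $\nabla \bar y = 0$ a.e.\ on $\{\bar y = 0\}$, so for each $i$ the set $\{\bar y = 0\}$ is, up to a null set, contained in $\{\partial_i \bar y = 0\}$; applying the same lemma to $\partial_i \bar y \in H^1(\Omega)$ gives $\nabla(\partial_i \bar y) = 0$ a.e.\ on $\{\partial_i \bar y = 0\}$, hence $\partial_{ij}\bar y = 0$ a.e.\ on $\{\bar y = 0\}$ for all $i,j$, and in particular $\Delta \bar y = 0$ a.e.\ on $\{\bar y = 0\}$. Since trivially $\bar y^+ = 0$ on $\{\bar y = 0\}$, the state equation $-\Delta \bar y + \bar y^+ = \bar u$ forces $\bar u = 0$ a.e.\ on $\{\bar y = 0\}$.

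Finally, I would rewrite \eqref{eq:normal_OS} in its equivalent pointwise form: for a.e.\ $x \in \Omega$, $(\bar p(x) + \nu \bar u(x))(v - \bar u(x)) \geq 0$ for every $v \in [\alpha(x),\beta(x)]$. At a.e.\ $x \in \{\bar y = 0\}$ we have $\bar u(x) = 0$, which by hypothesis lies strictly between $\alpha(x)$ and $\beta(x)$; testing the pointwise inequality with $v = \pm\varepsilon$ for small $\varepsilon > 0$ then yields $\bar p(x) + \nu\bar u(x) = 0$, i.e.\ $\bar p(x) = 0$. Hence $\bar p = 0$ a.e.\ on $\{\bar y = 0\}$, which is exactly $\meas(\{\bar y = 0\} \cap \{\bar p \neq 0\}) = 0$.

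The one delicate point is the third paragraph: justifying that $\Delta \bar y = 0$ a.e.\ on the level set $\{\bar y = 0\}$. This is where the $H^2$-regularity — and therefore the assumption that $\Omega$ is convex or has a $C^{1,1}$-boundary — is genuinely needed, and where one must appeal to Stampacchia's lemma (once for $\bar y$ and once for its first-order partials) rather than differentiate the constraint $\bar y \equiv 0$ naively. Everything else is a routine consequence of the pointwise characterization of \eqref{eq:normal_OS} and the sign condition $\alpha < 0 < \beta$.
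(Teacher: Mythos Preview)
Your proof is correct and follows essentially the same route as the paper: $H^2$-regularity of $\bar y$ from the convexity/$C^{1,1}$ assumption, then vanishing of $\Delta\bar y$ (hence of $\bar u$) a.e.\ on the level set $\{\bar y=0\}$, and finally $\bar p=0$ there via the pointwise form of \eqref{eq:normal_OS} together with $\alpha<0<\beta$. The only cosmetic difference is that the paper cites a lemma from the literature for the level-set step, whereas you spell out the iterated Stampacchia argument explicitly.
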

\begin{proof}
	It is sufficient to show that
	\begin{equation} \label{eq:p-vanish-2}
		\bar p = 0 \quad \text{a.e. on } \{\bar y = 0\}.
	\end{equation}
	To do this, we first employ the regularity of the domain $\Omega$ to derive the $H^2(\Omega)$-regularity of  $\bar y$. From this and the behavior of derivatives on level sets; see, e.g., \cite[Lem.~4.1]{ChristofMuller2020}, we have that $\bar u := -\Delta \bar y + \max(0, \bar y)$ always vanishes a.e. on the set $\{\bar y =0 \}$. This, \eqref{eq:normal_OS}, and the assumption that $\alpha(x) <0 < \beta(x)$ for a.e. $x \in \Omega$ imply that $\bar p$ vanishes a.e. in $\{\bar u = 0\}$ and thus \eqref{eq:p-vanish-2}.
\end{proof}

The stationarity system \eqref{eq:1st-OS} is relaxed when the objective functional is G\^{a}teaux-differentiable at the local minimizer.
\begin{theorem}[relaxed optimality system] \label{thm:1st-OS-relaxed}
	Let $\bar u$ be a local minimizer of \eqref{eq:P2} and $\bar y$ be the corresponding state. Assume that $j$ is G\^{a}teaux-differentiable in $\bar u$. Then, there exists an adjoint state $\bar p \in H^1_0(\Omega) \cap C(\overline\Omega)$ satisfying
	\begin{subequations}
		\label{eq:1st-OS-relaxed}
		\begin{align}
			&\left\{
			\begin{aligned}
				-\Delta \bar p +  \chi \bar p &= L'_y(x,\bar y) && \text{in } \Omega, \\
				\bar p &=0 && \text{on } \partial\Omega,
			\end{aligned}
			\right.  \label{eq:adjoint_OS-relaxed} \\
			&\int_\Omega \left(\bar p + \nu \bar u \right)\left( u - \bar u \right) \dx \geq 0 \quad \text{for all } u \in \mathcal{U}_{ad}  \label{eq:normal_OS-relaxed}
		\end{align}
	\end{subequations}
	for all $\chi \in L^\infty(\Omega)$ with $\chi(x) \in \partial_C \max(0, \bar y(x))$ a.e. $x \in \Omega$. Moreover, there holds
	\begin{equation} \label{ass:G-diff}
	 	\tag{GA}
		\meas\left( \{\bar y = 0 \} \cap \{ \bar p \neq 0\} \right) = 0.
	\end{equation}
\end{theorem}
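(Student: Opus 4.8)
The plan is to read off the entire statement from \cref{thm:G-diff-characterization} once the correct candidate for $\bar p$ is fixed, and then append the classical variational inequality for a differentiable functional on a convex set. Concretely, I would set
$\bar p := G_{\1_{\{\bar y \geq 0\}}}\bigl(L'_y(\cdot,\bar y)\bigr)$.
Since $\bar y = S(\bar u) \in C(\overline\Omega)$ is bounded, say $\norm{\bar y}_{L^\infty(\Omega)} \leq M$, the growth condition in \cref{ass:standing} gives $|L'_y(\cdot,\bar y)| \leq \phi_M \in L^2(\Omega)$, so $L'_y(\cdot,\bar y) \in L^2(\Omega)$ and hence, by \cref{def:sol-oper-adjoint}, $\bar p$ is a well-defined element of $H^1_0(\Omega) \cap C(\overline\Omega)$ solving $-\Delta \bar p + \1_{\{\bar y \geq 0\}}\bar p = L'_y(x,\bar y)$ with homogeneous Dirichlet data. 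Note also that $\1_{\{\bar y \geq 0\}}(x) \in \partial_C\max(0,\bar y(x))$ a.e., so this is a legitimate instance of \eqref{eq:adjoint_OS-relaxed}.

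Next I would feed the Gâteaux-differentiability hypothesis into \cref{thm:G-diff-characterization}. Applied to $u = \bar u$, the forward implication of that theorem yields $\meas\bigl(\{\bar y = 0\} \cap \{\bar p \neq 0\}\bigr) = 0$, which is exactly \eqref{ass:G-diff}; the converse implication then delivers the derivative formula $j'(\bar u)h = \int_\Omega (\bar p + \nu \bar u)h \dx$ for all $h \in L^2(\Omega)$. Moreover, now that \eqref{ass:G-diff} is known, identity \eqref{eq:p-identity} gives $\bar p = G_\chi\bigl(L'_y(\cdot,\bar y)\bigr)$ for every $\chi \in L^\infty(\Omega)$ with $\chi(x) \in \partial_C \max(0,\bar y(x))$ a.e.; equivalently, $\bar p$ satisfies \eqref{eq:adjoint_OS-relaxed} for all such $\chi$, which is the first part of the claim.

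It remains to establish \eqref{eq:normal_OS-relaxed}. This is the standard first-order necessary condition for minimizing a Gâteaux-differentiable functional over the convex set $\mathcal{U}_{ad}$: for any $u \in \mathcal{U}_{ad}$ and any sufficiently small $t \in (0,1)$, convexity gives $\bar u + t(u - \bar u) \in \mathcal{U}_{ad}$ and local optimality of $\bar u$ gives $j\bigl(\bar u + t(u-\bar u)\bigr) \geq j(\bar u)$; dividing by $t$ and letting $t \downarrow 0$, using that $j$ is directionally differentiable (a consequence of \cref{prop:control-to-state}\,\ref{it:dir-der}, or simply of Gâteaux-differentiability), produces $j'(\bar u)(u - \bar u) \geq 0$. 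Substituting the derivative formula from the previous step yields $\int_\Omega (\bar p + \nu \bar u)(u - \bar u)\dx \geq 0$ for all $u \in \mathcal{U}_{ad}$, i.e., \eqref{eq:normal_OS-relaxed}; in the unconstrained case $\mathcal{U}_{ad} = L^2(\Omega)$ this reduces to $\bar p + \nu\bar u = 0$.

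I do not anticipate a genuine obstacle: the statement is essentially a corollary of \cref{thm:G-diff-characterization} combined with the elementary variational inequality. The only points requiring a little care are (a) verifying $L'_y(\cdot,\bar y) \in L^2(\Omega)$ so that the solution operator $G_{\1_{\{\bar y \geq 0\}}}$ applies and $\bar p$ inherits the $H^1_0(\Omega)\cap C(\overline\Omega)$ regularity, and (b) invoking the two implications of \cref{thm:G-diff-characterization} in the right order so that the derivative formula is available before it is inserted into the variational inequality.
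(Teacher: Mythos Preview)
Your proposal is correct and follows essentially the same route as the paper: define $\bar p := G_{\1_{\{\bar y \geq 0\}}}\bigl(L'_y(\cdot,\bar y)\bigr)$, invoke \cref{thm:G-diff-characterization} to obtain both \eqref{ass:G-diff} and the derivative formula, use \eqref{eq:p-identity} to upgrade the adjoint equation to all admissible $\chi$, and derive \eqref{eq:normal_OS-relaxed} from the standard variational inequality for a G\^{a}teaux-differentiable functional on a convex set. The only differences are cosmetic (you verify $L'_y(\cdot,\bar y)\in L^2(\Omega)$ explicitly and present the steps in a slightly different order), and your extra care in point~(a) is harmless.
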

\begin{proof}
	Setting $\bar p :=G_{ \1_{\{\bar y \geq 0 \}}}(L'_y(\cdot, \bar y))$ and applying \cref{thm:G-diff-characterization}, we have for all $h \in L^2(\Omega)$ that
	\begin{equation*}
		j'(\bar u)h = \int_\Omega (\bar p + \nu \bar u)h \dx
	\end{equation*}
	and $\bar p$ satisfies \eqref{ass:G-diff}.
	Let $u$ be an arbitrary point in $\mathcal{U}_{ad}$. Since $\bar u$ is a local optimal solution of \eqref{eq:P}, there holds
	\begin{equation*}
		\frac{j(\bar u + t(u -\bar u)) - j(\bar u)}{t} \geq 0
	\end{equation*}
	for all $t>0$ small enough. Letting $t \to 0^+$ yields $j'(\bar u)(u - \bar u) \geq 0$, which is identical to \eqref{eq:normal_OS-relaxed}. Finally, \eqref{eq:adjoint_OS-relaxed} follows from the combination of the definition of $\bar p$ and \eqref{eq:p-identity}.
\end{proof}

The following result, which is a direct consequence of \cref{prop:G-diff-exams}, \cref{thm:G-diff-characterization}, and \cref{thm:1st-OS-relaxed}, presents the G\^{a}teaux-differentiability of the objective functional at any C-stationary point  and, as stated in \cite[Cor.~5.4]{ChristofMuller2020}, the equivalence of some notions of stationarity when considering \eqref{eq:P} without control constraints.

\begin{corollary}
	\label{cor:G-diff-no-constraint}
	Assume that either $\alpha = -\infty$ and $\beta = \infty$  or $\alpha < 0 < \beta$ a.e. in $\Omega$. Let $\Omega$ either be convex or have a $C^{1,1}$-boundary. Then, the following assertions hold:
	\begin{enumerate}[label=(\roman*)]
		\item The objective functional $j$ is G\^{a}teaux-differentiable in any C-stationary point.
		\item If $\bar u$ is a C-stationary point, then $\bar p :=G_{ \1_{\{\bar y \geq 0 \}}}(L'_y(\cdot, \bar y))$ satisfies \eqref{eq:1st-OS-relaxed}.
		\item If $\alpha = -\infty$ and $\beta = \infty$, i.e., \eqref{eq:P} has no control constraints, then the notions of C-stationarity, purely primal stationarity in the sense of \cite[Prop.~4.10]{Constantin2017}, and strong stationarity in the sense of \cite[Thm.~4.12]{Constantin2017} are identical.
	\end{enumerate}
\end{corollary}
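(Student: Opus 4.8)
The plan is to obtain all three assertions by chaining \cref{prop:G-diff-exams}, \cref{thm:G-diff-characterization}, and the relaxed system of \cref{thm:1st-OS-relaxed}. I would first fix a C-stationary point $\bar u$, with associated $\bar p \in H^1_0(\Omega)\cap C(\overline\Omega)$ and $\bar\chi \in L^\infty(\Omega)$ as in \eqref{eq:1st-OS}, and set $\bar y := S(\bar u)$. In both admissible regimes --- $\alpha = -\infty$, $\beta = \infty$, or $\alpha < 0 < \beta$ a.e. --- the hypotheses of \cref{prop:G-diff-exams} are satisfied (the domain is convex or has a $C^{1,1}$-boundary, $\alpha(x) < 0 < \beta(x)$ a.e., and $\bar p \in L^2(\Omega)$ obeys the variational inequality \eqref{eq:normal_OS}), whence $\meas(\{\bar y = 0\}\cap\{\bar p \neq 0\}) = 0$; in particular \eqref{ass:G-diff} already holds.

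The key step is then to identify $\bar p$ with the canonical adjoint $p := G_{\1_{\{\bar y \geq 0\}}}(L'_y(\cdot,\bar y))$ of \cref{thm:G-diff-characterization}. Since $\partial_C\max(0,s)$ equals $\{1\}$, $\{0\}$, or $[0,1]$ according as $s>0$, $s<0$, or $s=0$, the multipliers $\bar\chi$ and $\1_{\{\bar y\geq 0\}}$ coincide a.e. on $\{\bar y\neq 0\}$, while on $\{\bar y=0\}$ both $\bar\chi\bar p$ and $\1_{\{\bar y\geq 0\}}\bar p$ vanish by the previous paragraph; hence $\bar\chi\bar p = \1_{\{\bar y\geq 0\}}\bar p$ a.e., so \eqref{eq:adjoint_OS} becomes the equation \eqref{eq:G-oper} defining $G_{\1_{\{\bar y\geq 0\}}}(L'_y(\cdot,\bar y))$ and, by uniqueness, $\bar p = p$. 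Now \cref{thm:G-diff-characterization} applies with this $p$: item \ref{it:G-diff-char} holds, hence item \ref{it:G-diff}, which is assertion~(i); furthermore \eqref{eq:p-identity} gives \eqref{eq:adjoint_OS-relaxed} for every admissible $\chi$, and combined with \eqref{eq:normal_OS} (which is \eqref{eq:normal_OS-relaxed} for this same $\bar p$) and \eqref{ass:G-diff} this is assertion~(ii); note that no local optimality is needed here, \cref{thm:1st-OS-relaxed} serving only as a template for the relaxed system.

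For assertion~(iii) I would specialize to $\mathcal{U}_{ad} = L^2(\Omega)$. Strong stationarity always implies C-stationarity, so it remains to close the loop. By~(i), a C-stationary $\bar u$ has $j$ G\^{a}teaux-differentiable with $j'(\bar u)h = \int_\Omega(\bar p + \nu\bar u)h\dx$, and \eqref{eq:normal_OS} over $\mathcal{U}_{ad} = L^2(\Omega)$ forces $\bar p + \nu\bar u = 0$, i.e., $j'(\bar u) = 0$ --- precisely the purely primal stationarity of \cite[Prop.~4.10]{Constantin2017}; since moreover $\bar p$ vanishes on $\{\bar y=0\}$ and $\bar p = G_{\1_{\{\bar y\geq 0\}}}(L'_y(\cdot,\bar y))$, the selection $\bar\chi = \1_{\{\bar y\geq 0\}}$ satisfies the strong-stationarity system of \cite[Thm.~4.12]{Constantin2017}, the equivalence being recorded in \cite[Cor.~5.4]{ChristofMuller2020}.

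I expect the crux to be the identification $\bar p = p$: one must make sure that the specific adjoint delivered by \eqref{eq:1st-OS}, rather than a companion solution attached to a different Clarke-gradient selection, is the one to which \cref{thm:G-diff-characterization} and its formula \eqref{eq:diff-formula} apply, so that \eqref{eq:normal_OS} and the G\^{a}teaux derivative of $j$ genuinely refer to the same function. Everything else reduces to the three-case description of $\partial_C\max(0,\cdot)$ and the cited results.
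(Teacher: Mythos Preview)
Your proposal is correct and follows the same chain of reasoning the paper indicates---\cref{prop:G-diff-exams}, \cref{thm:G-diff-characterization}, and (the argument behind) \cref{thm:1st-OS-relaxed}, with assertion~(iii) ultimately deferred to \cite[Cor.~5.4]{ChristofMuller2020}. You are right to single out the identification $\bar p = G_{\1_{\{\bar y\geq 0\}}}(L'_y(\cdot,\bar y))$ as the crux: the paper leaves this implicit in its one-line ``direct consequence'' statement, whereas you make the argument (via $\bar\chi\bar p = \1_{\{\bar y\geq 0\}}\bar p$ and uniqueness for \eqref{eq:G-oper}) explicit, and you correctly observe that local optimality from \cref{thm:1st-OS-relaxed} is not needed since the variational inequality \eqref{eq:normal_OS} is already part of C-stationarity.
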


\section{Second-order optimality conditions} \label{sec:2nd-OC}

Our main goal in this paper is to derive second-order necessary and sufficient optimality conditions in terms of a \emph{second-order generalized derivative} of the cost functional $j$ in critical directions. 

Throughout this section, let $\bar u$ be an arbitrary, but fixed feasible control in $\mathcal{U}_{ad}$ and let $\bar y:= S(\bar u)$. Define $\bar p : = G_{\1_{\{ \bar y \geq 0 \}}}\left(L'_y(\cdot, \bar y) \right)$, i.e., $\bar p \in H^1_0(\Omega) \cap C(\overline\Omega)$ is the unique solution to
\begin{equation} \label{eq:adjoint}
	\left\{
	\begin{aligned}
	-\Delta p  + \1_{\{ \bar y \geq 0 \}} \bar p & = L'_y(\cdot, \bar y) && \text{in } \Omega, \\
	\bar p &=0 && \text{on } \partial\Omega,
	\end{aligned}
	\right.
\end{equation}
and set
\begin{equation}
	\label{eq:objective-der}
	\bar d:= \bar p + \nu \bar u.  
\end{equation}

\subsection{Second-order generalized derivatives} \label{sec:2nd-OC:curvature}
Before establishing second-order generalized derivatives of $j$ we need some additional notation. 

For any $t>0$ and $h \in L^2(\Omega)$, we define
\begin{equation} \label{eq:zeta-func}
    \begin{aligned}
        \zeta(\bar u; t, h) &:=   S(\bar u + th)\left( \1_{\{\bar y\geq 0 \}}  - \1_{\{ S(\bar u + th) \geq 0 \}} \right).
    \end{aligned}
\end{equation}
We then define for $\{t_n\} \in c_0^+$, $h \in L^2(\Omega)$, and $\{h_n\} \subset L^2(\Omega)$, the extended numbers
\begin{equation}
    \label{eq:key-term-sn}
    \underline{Q}(\bar u,\bar p;\{t_n\}, \{h_n\}) := \liminf_{n \to \infty} \frac{1}{t_n^2} \int_\Omega \bar p  \zeta (\bar u;t_n, h_n) \dx
\end{equation}
and
\begin{equation}
    \label{eq:key-term}
    \tilde{Q}(\bar u,\bar p; h) := \inf \left\{ \underline{Q}(\bar u,\bar p;\{t_n\}, \{h\}) \mid \{t_n\} \in c_0^+ \right\}
\end{equation}
with $\{h\}$ denoting the constant sequence $h_n \equiv h$ for all $n\geq 1$.

The SNC and SSC for $\bar u$ of \eqref{eq:P} are relevant to the following notion.
\begin{definition}
	\label{def:curvature-func}
	For any $h \in L^2(\Omega)$, we call $Q(\bar u, \bar p; h)$ with
	\begin{equation}
		\label{eq:curvature-func}
		Q(\bar u, \bar p; h) :=  \int_\Omega\left[ L''_{yy}(x,\bar y) \left(S'(\bar u; h)\right)^2 + \nu h^2 \right] \dx + 2 \tilde{Q}(\bar u,\bar p; h)
	\end{equation}
	a \emph{second-order generalized derivative} of $j$ at $\bar u$ in the direction $h$.
\end{definition}

\begin{remark} \label{rem:curvature-comparison}
	As shall be seen in \cref{thm:2nd-OS-nec,thm:2nd-OS-suf} the term  $Q(\bar u, \bar p; h)$ occurs in both (SNC) and (SSC), and it plays an important role in deriving the no-gap conditions. Moreover, from the lines in proofs of \cref{thm:2nd-OS-nec,thm:2nd-OS-suf}, the second-order generalized derivative $\frac{1}{2}Q(\bar u, \bar p; h)$ is related to the limit inferior of
	\[
		\frac{1}{t_n^2}[j(\bar u + t_n h_n) - j(\bar u)]
	\]
	with some sequences $t_n \to 0^+$ and $h_n \rightharpoonup h$ weakly in $L^2(\Omega)$. Therefore it can be regarded as a replacement for the non-existing second-order derivative of $j$. 
%
%
\end{remark}

The term $\tilde{Q}$ plays an important role in the second-order generalized derivative of $j$ and, as seen in \cref{sec:2nd-OC:soc} below, it significantly contributes to deriving the no-gap theory of second-order necessary and sufficient conditions. In the rest of this subsection, we provide some required properties of $\tilde{Q}$. For this purpose, we need the following assumption on the structure of sets near the boundary of $\{\bar y =0\}$.
\begin{assumption} \label{ass:structure-non-diff}
	There exists a positive constant $c_s$ that satisfies
	\begin{equation}
		\label{eq:structure-non-diff}
		\tag{SA}
		\meas\left( \{ 0 < |\bar y| < \epsilon \} \right) \leq c_s \epsilon
	\end{equation}
	for all $\epsilon \in (0,1)$.
\end{assumption}

The assumption \eqref{eq:structure-non-diff} does not require that the set $\{\bar y =0 \}$ has measure zero; see \cref{exam:structure-ass} below. Furthermore, \cref{exam:SA-GA} shows that both assumptions \eqref{ass:G-diff} and \eqref{eq:structure-non-diff} do not imply that meas$\{\bar y =0 \}=0$.
The control-to-state mapping $S$ is thus not G\^{a}teaux-differentiable in $\bar u$ in general due to \cref{cor:G-diff-char-S}. A condition that is the same as \eqref{eq:structure-non-diff} but imposed on an adjoint state was employed in \cite{WachsmuthWachsmuth2011} to derive error
estimates with respect to the regularization parameter for an elliptic optimal control problem. Such a condition was also used in  \cite{Wachsmuth2013}  to deal with the parameter choice rule for the Tikhonov regularization parameter depending on a posteriori computable quantities. 
In \cite{DeckelnichHinze2012}, a more general version of \eqref{eq:structure-non-diff}  was exploited to show the a priori error estimates for the approximation of elliptic control problems.

\begin{example}
	\label{exam:structure-ass}
	Let $\Omega$ be a unit ball in $\R^2$, that is, $\Omega := \{(x_1, x_2) \in \R^2 \mid x_1^2 + x_2^2 <1 \}$. Then $\bar y := \1_{\{ x_1^2 + x_2^2 < \frac{1}{2} \}}\left( \frac{1}{2} - x_1^2 - x_2^2 \right) \in H^1_0(\Omega) \cap C(\overline\Omega)$ satisfies the structural assumption \eqref{eq:structure-non-diff}. 
\end{example}

\begin{example}
	\label{exam:SA-GA}
	We consider  the optimal control problem
	\begin{equation*}
		\left\{
		\begin{aligned}
			\min_{u\in L^2(\Omega)} &j(u) := \frac{1}{2} \norm{y_u}_{L^2(\Omega)}^2 + \frac{\nu}{2} \norm{u}_{L^2(\Omega)}^2 \\
			\text{s.t.} \quad &-\Delta y_u + \max(0,y_u) = u \, \text{ in } \Omega, \quad y_u = 0 \, \text{ on } \partial\Omega.
		\end{aligned}
		\right.
	\end{equation*}
	Obviously, $\bar u =0$ is a unique minimizer and the corresponding state and adjoint state are, respectively,
	$\bar y =0$ and $\bar p =0$. Easily, \eqref{ass:G-diff} and \eqref{eq:structure-non-diff} are fulfilled and one has
	\[
		\text{meas}(\{ \bar y = 0\}) = \text{meas}(\Omega) >0.
	\]
\end{example}

\begin{lemma}
	\label{lem:Q-defined}
	Assume that \eqref{ass:G-diff} and \eqref{eq:structure-non-diff} are fulfilled. Then, for any $h \in L^2(\Omega)$, there holds    
	\begin{equation*}
		\left| \tilde{Q}(\bar u,\bar p;h) \right| \leq c_s \norm{\bar p}_{L^\infty(\Omega)} \norm{S'(\bar u;h)}_{L^\infty(\Omega)}^2.
	\end{equation*}    
\end{lemma}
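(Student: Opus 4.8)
The strategy is to bound the integrand of $\underline{Q}(\bar u,\bar p;\{t_n\},\{h\})$ pointwise and show the contributions concentrate on the set $\{0<|\bar y|<\epsilon\}$ whose measure is controlled by \eqref{eq:structure-non-diff}. First I would unpack $\zeta(\bar u;t,h) = S(\bar u+th)\left(\1_{\{\bar y\geq 0\}} - \1_{\{S(\bar u+th)\geq 0\}}\right)$ and observe that the factor $\1_{\{\bar y\geq 0\}} - \1_{\{S(\bar u+th)\geq 0\}}$ is nonzero only where $\bar y$ and $y_t := S(\bar u+th)$ have "different signs" in the sense of the indicator of $\{\cdot\geq 0\}$; on that set one has $|S(\bar u+th)| \leq |y_t - \bar y|$ because $y_t$ lies between $0$ and $\bar y$ in absolute value (if $\bar y\geq 0$ but $y_t<0$ then $|y_t|\leq|y_t-\bar y|$, and symmetrically). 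Hence pointwise
\[
	|\bar p\,\zeta(\bar u;t,h)| \leq \|\bar p\|_{L^\infty(\Omega)}\,\1_{E_t}\,|y_t-\bar y|,
\]
where $E_t$ is the sign-discrepancy set. Dividing by $t^2$ and using $|y_t-\bar y|\leq |y_t-\bar y-t\delta_h| + t|\delta_h|$ with $\delta_h := S'(\bar u;h)$, the first term is $o(t)$ uniformly by the $C(\overline\Omega)$-Hadamard differentiability in \cref{prop:control-to-state}\ref{it:dir-der}, so after dividing by $t^2$ it contributes at most $\|\bar p\|_\infty\,\meas(E_t)\cdot o(1/t) \to 0$ once we know $\meas(E_t)=O(t)$; the dominant term is $\frac{1}{t}\|\bar p\|_\infty\1_{E_t}|\delta_h|$, so $\frac{1}{t^2}\int_\Omega|\bar p\,\zeta|\,dx \leq \|\bar p\|_\infty\|\delta_h\|_\infty\cdot\frac{1}{t}\meas(E_t) + o(1)$.

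The crux is therefore estimating $\meas(E_t)$ where $E_t \subseteq \{\operatorname{sign}(\bar y\geq 0)\neq\operatorname{sign}(y_t\geq 0)\}$. On $E_t$ we have $|\bar y|\leq|y_t-\bar y|\leq t\|\delta_h\|_\infty + o(t) \leq t\,M_h$ for $n$ large, where $M_h$ is any constant slightly bigger than $\|\delta_h\|_\infty$. Now I split $E_t$ into $E_t\cap\{\bar y=0\}$ and $E_t\cap\{0<|\bar y|\}$. On $\{\bar y=0\}\cap\{p\neq 0\}$ the measure is zero by \eqref{ass:G-diff}, so $\bar p = 0$ a.e. there and the integrand vanishes a.e. on $E_t\cap\{\bar y=0\}$ — this is exactly where \eqref{ass:G-diff} is used. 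On $E_t\cap\{0<|\bar y|\}$ we have $0<|\bar y|\leq t M_h$, hence for $t$ small (so that $tM_h<1$) this set is contained in $\{0<|\bar y|<tM_h\}$, whose measure is $\leq c_s t M_h$ by \eqref{eq:structure-non-diff}. Combining, $\frac{1}{t}\int_\Omega|\bar p\,\zeta(\bar u;t,h)|\,dx \leq c_s M_h\|\bar p\|_\infty\|\delta_h\|_\infty$, and then $\frac{1}{t^2}\int_\Omega|\bar p\,\zeta|\,dx \leq c_s M_h\|\bar p\|_\infty\|\delta_h\|_\infty + o(1)$; taking $\liminf$ as $t_n\to 0^+$ and then letting $M_h\downarrow\|\delta_h\|_\infty$ (the bound is uniform in the choice of sequence) gives $|\underline{Q}(\bar u,\bar p;\{t_n\},\{h\})|\leq c_s\|\bar p\|_\infty\|\delta_h\|_\infty^2$ for every $\{t_n\}\in c_0^+$, hence the same bound for the infimum $\tilde{Q}(\bar u,\bar p;h)$.

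The main obstacle I anticipate is making the pointwise inequality $|S(\bar u+th)|\leq|y_t-\bar y|$ on the sign-discrepancy set fully rigorous at the level of a.e. statements — in particular handling the ambiguous case $\bar y = 0$ carefully (there the indicator $\1_{\{\bar y\geq 0\}}=1$ always, so $E_t\cap\{\bar y=0\}=\{\bar y=0\}\cap\{y_t<0\}$, and on it $|y_t| = |y_t-\bar y|$ trivially; the real saving there is \eqref{ass:G-diff}, not the size estimate). A secondary technical point is that the $o(t)$ in "$|y_t-\bar y-t\delta_h|=o(t)$ in $C(\overline\Omega)$" must be uniform enough that dividing by $t$ and multiplying by $\meas(E_t)=O(t)$ still yields something $o(1)$ — this follows since $\|y_t-\bar y-t\delta_h\|_{C(\overline\Omega)}/t\to 0$ by \cref{prop:control-to-state}\ref{it:dir-der}, and $\meas(E_t)\leq\meas(\Omega)$ is bounded, so the product over $t^2$ is $o(1)/t\cdot\meas(\Omega)$... actually one needs $o(t)/t^2\cdot\meas(E_t) = o(t)/t^2\cdot O(t) = o(1)$, which works. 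Everything else is routine once the pointwise structure is pinned down.
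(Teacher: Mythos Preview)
Your proposal is correct and follows essentially the same route as the paper: both use \eqref{ass:G-diff} to kill the contribution from $\{\bar y=0\}$ and \eqref{eq:structure-non-diff} to bound the measure of the sign-discrepancy set contained in $\{0<|\bar y|<\|y_t-\bar y\|_{C(\overline\Omega)}\}$. The paper's write-up is marginally more direct---it sets $\epsilon_n:=\|y_n-\bar y\|_{C(\overline\Omega)}$, obtains $\bigl|\int_\Omega\bar p\,\zeta\,dx\bigr|\le c_s\|\bar p\|_{L^\infty}\epsilon_n^2$ in one step, and then uses $\epsilon_n/t_n\to\|S'(\bar u;h)\|_{L^\infty}$, thereby avoiding your triangle-inequality split of $|y_t-\bar y|$ and the auxiliary parameter $M_h$ (and note your displayed ``$\tfrac{1}{t}\int|\bar p\,\zeta|\le\ldots$'' should read $\tfrac{1}{t^2}$).
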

\begin{proof}
	We now show for any $\{t_n\} \in c_0^+$ that
	\begin{equation}
		\label{eq:Q-bound}
		\left| \underline Q(\bar u,\bar p;\{t_n\},\{h\}) \right| \leq c_s \norm{\bar p}_{L^\infty(\Omega)} \norm{S'(\bar u;h)}_{L^\infty(\Omega)}^2,
	\end{equation}
	which, together with the definition of $\tilde{Q}$, yields the desired conclusion. To this target, setting $y_n := S(\bar u + t_n h)$, we have $y_n \to \bar y$ in $H^1_0(\Omega) \cap C(\overline\Omega)$. Put $\epsilon_n := \norm{y_n - \bar y}_{C(\overline\Omega)}$ and define the measurable functions
	\begin{equation*}
		T_n :=  y_n \left( \1_{\{\bar y > 0, y_n < 0 \}}  - \1_{\{ y_n > 0, \bar y < 0 \}} \right).
	\end{equation*}
	Obviously, there holds
	\begin{align*}
		0 \geq T_n \geq   (y_n-\bar y) \left( \1_{\{\bar y > 0, y_n < 0 \}}  - \1_{\{ y_n > 0, \bar y < 0 \}} \right)
	\end{align*}
	for a.e. in $\Omega$.	
	A simple computing gives
	\begin{align*}
		&\{\bar y > 0, y_n < 0 \} \subset \{ 0 < \bar y < \bar y - y_n \} \subset \{ 0 < \bar y < \epsilon_n \} \\
		\intertext{and}
		& \{\bar y < 0, y_n > 0 \} \subset \{ \bar y - y_n < \bar y < 0 \} \subset \{ -\epsilon_n < \bar y < 0 \}.
	\end{align*}
	We therefore derive
	\begin{equation}
		\label{eq:inclusion-key1}
		\{\bar y > 0, y_n < 0 \} \cup \{\bar y < 0, y_n > 0 \} \subset \{ 0 < |\bar y| < \epsilon_n \}
	\end{equation}
	and
	\begin{equation*}
		0 \geq T_n \geq - |y_n - \bar y| \1_{ \{ 0 < |\bar y| < \epsilon_n \} }. 
	\end{equation*}
	Since $\epsilon_n \to 0^+$, there exists an integer $n_0 \in \N$ such that $\epsilon_n \in (0,1)$ for all $n \geq n_0$. The structural assumption \eqref{eq:structure-non-diff} then yields
	\begin{equation}
		\label{eq:T-n-esti}
		\norm{T_n}_{L^1(\Omega)} \leq c_s \epsilon_n^2 \quad \text{for all } n \geq n_0.
	\end{equation}
	Besides, the assumption \eqref{ass:G-diff} implies that
	\begin{equation} \label{eq:identity-key}
		\begin{aligned}[b]
			\bar p \zeta(\bar u;t_n, h) & = \bar p y_n\left( \1_{\{\bar y\geq 0 \}}  - \1_{\{ y_n \geq 0 \}} \right)\\
			& =\bar p y_n\left( \1_{\{\bar y\geq 0, y_n < 0 \}}  - \1_{\{ y_n \geq 0, \bar y <0 \}} \right) \\
			& = \bar p y_n \left( \1_{\{\bar y > 0, y_n < 0 \}}  - \1_{\{ y_n > 0, \bar y <0 \}} \right)
		\end{aligned}
	\end{equation}
	for a.e. in $\Omega$ and thus that
	\begin{equation*}
		\int_\Omega \bar p \zeta(\bar u;t_n, h) \dx = \int_\Omega \bar p T_n \dx.
	\end{equation*}
	This, along with \eqref{eq:T-n-esti}, gives
	\begin{equation*}
		\left|\int_\Omega \bar p \zeta(\bar u;t_n, h) \dx \right| \leq c_s\norm{\bar p}_{L^\infty(\Omega)} \epsilon_n^2
	\end{equation*}
	for all $n \geq n_0$. We then derive \eqref{eq:Q-bound} from the definition of $\underline Q$ and \cref{prop:control-to-state}. 
\end{proof}

Importantly, $\tilde{Q}$ is positively homogeneous of degree $2$ in $h$ and so is $Q$.
\begin{lemma} \label{lem:homogeneity}
Under the assumptions \eqref{ass:G-diff} and \eqref{eq:structure-non-diff}, there hold
  \begin{align*}
  	\tilde{Q}(\bar u, \bar p;th) = t^2 \tilde{Q}(\bar u, \bar p;h) \\
    \intertext{and}
    Q(\bar u, \bar p;th)  = t^2 Q(\bar u, \bar p;h) 
  \end{align*}
  for all $h \in L^2(\Omega)$ and $t>0$.
\end{lemma}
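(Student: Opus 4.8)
The plan is to exploit the elementary observation that the quantity $\zeta$ introduced in \eqref{eq:zeta-func} depends on its arguments $t$ and $h$ only through the product $th$; indeed, directly from the definition,
\[
\zeta(\bar u; s, \lambda h) = S(\bar u + s\lambda h)\left(\1_{\{\bar y \geq 0\}} - \1_{\{S(\bar u + s\lambda h) \geq 0\}}\right) = \zeta(\bar u; \lambda s, h)
\]
for all $s, \lambda > 0$ and $h \in L^2(\Omega)$. Fix $t>0$ and $h \in L^2(\Omega)$. The first step is to establish the homogeneity of $\tilde{Q}$. For any $\{t_n\} \in c_0^+$, the displayed identity with $\lambda = t$ gives $\zeta(\bar u; t_n, th) = \zeta(\bar u; t t_n, h)$, hence, using $t_n^{-2} = t^2 (t t_n)^{-2}$,
\[
\frac{1}{t_n^2}\int_\Omega \bar p\,\zeta(\bar u; t_n, th)\dx = t^2\cdot \frac{1}{(t t_n)^2}\int_\Omega \bar p\,\zeta(\bar u; t t_n, h)\dx .
\]
Taking $\liminf$ as $n\to\infty$ and using that multiplication by the positive constant $t^2$ commutes with $\liminf$ (all the $\underline{Q}$-values involved being finite by \cref{lem:Q-defined} under \eqref{ass:G-diff} and \eqref{eq:structure-non-diff}) yields $\underline{Q}(\bar u,\bar p;\{t_n\},\{th\}) = t^2\,\underline{Q}(\bar u,\bar p;\{t t_n\},\{h\})$.

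Next I would pass to the infimum over sequences. Since $t>0$ is fixed, the map $\{t_n\}\mapsto\{t t_n\}$ is a bijection of $c_0^+$ onto itself, so $\{t t_n\}$ ranges over all of $c_0^+$ as $\{t_n\}$ does; moreover $\inf$ commutes with multiplication by the positive constant $t^2$. Combining this with the previous step,
\[
\tilde{Q}(\bar u,\bar p; th) = \inf_{\{t_n\}\in c_0^+}\underline{Q}(\bar u,\bar p;\{t_n\},\{th\}) = t^2\inf_{\{s_n\}\in c_0^+}\underline{Q}(\bar u,\bar p;\{s_n\},\{h\}) = t^2\,\tilde{Q}(\bar u,\bar p; h).
\]

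Finally, for $Q$ I would combine this with the positive homogeneity of degree one of the directional derivative: by the very definition of a directional derivative, $S'(\bar u; th) = t\,S'(\bar u; h)$ for $t>0$ (equivalently, this follows from the uniqueness of the solution of \eqref{eq:dir-der} and the positive homogeneity of $z\mapsto\max{'}(\bar y; z)$), so $(S'(\bar u; th))^2 = t^2 (S'(\bar u; h))^2$. Substituting this together with $\tilde{Q}(\bar u,\bar p; th) = t^2\tilde{Q}(\bar u,\bar p; h)$ into \eqref{eq:curvature-func} immediately gives $Q(\bar u,\bar p; th) = t^2 Q(\bar u,\bar p; h)$. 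There is no genuine obstacle in this argument; the only points that require a little care are the reindexing bijection of $c_0^+$ under scaling by $t$ and invoking \cref{lem:Q-defined} beforehand, so that the $\liminf$ and $\inf$ are manipulated as honest real numbers rather than as possibly infinite values.
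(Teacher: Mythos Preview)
Your proof is correct and follows essentially the same route as the paper's: both exploit the identity $\zeta(\bar u;t_n,th)=\zeta(\bar u;tt_n,h)$, pass first to $\underline{Q}$ and then to the infimum over $c_0^+$ via the scaling bijection, and finish using the positive homogeneity of $S'(\bar u;\cdot)$. Your explicit appeal to \cref{lem:Q-defined} to ensure finiteness is a harmless extra precaution that the paper omits, since multiplication by a positive constant commutes with $\liminf$ and $\inf$ in the extended reals anyway.
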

\begin{proof}
	Taking $h \in L^2(\Omega)$, $\{t_n\} \in c_0^+$, and $t>0$ arbitrarily, we observe from \eqref{eq:zeta-func} that 
  	\begin{equation*}
    	\zeta(\bar u;t_n,th) = \zeta(\bar u;tt_n,h).
  	\end{equation*}
  	We then have from the definition of $\underline Q$ that
  	\begin{equation*}
    	\underline{Q}(\bar u, \bar p;\{t_n\}, \{th\}) = t^2 \underline Q(\bar u, \bar p;\{tt_n\}, \{h\}).
  	\end{equation*}
  	There thus holds that
	\begin{equation*}
	   \begin{aligned}[b]
	    \tilde{Q}(\bar u, \bar p; th) &= \inf \left\{ \underline{Q}(\bar u,\bar p; \{t_n\}, \{th\}) \mid \{t_n\} \in c_0^+ \right\}\\
	    & = \inf \left\{ t^2\underline{Q}(\bar u,\bar p; \{tt_n\}, \{h\}) \mid \{t_n\} \in c_0^+ \right\}\\
	    & = t^2 \inf \left\{ \underline{Q}(\bar u,\bar p; \{r_n\}, \{h\}) \mid \{r_n\} \in c_0^+ \right\} \quad (\text{by setting } r_n := t t_n)\\
	    & = t^2 \tilde{Q}(\bar u, \bar p; h),
	   \end{aligned}
	 \end{equation*}
	 which shows the positive homogeneity of degree $2$ in $h$ of $\tilde{Q}$. This and the positive homogeneity of $S'(\bar u; \cdot)$ imply the desired property of $Q$.
\end{proof}

\begin{lemma}
	\label{lem:invariant}
	Assume that \eqref{ass:G-diff} and \eqref{eq:structure-non-diff} are satisfied. Then, for any sequences $\{t_n\} \in c_0^+$ and $\{h_n\}, \{v_n\} \subset L^2(\Omega)$ such that $h_n \rightharpoonup h$ and $v_n \rightharpoonup h$ in $L^2(\Omega)$ for some $h \in L^2(\Omega)$, one has
	\begin{equation} \label{eq:weak-invariant}
		\lim_{n \to \infty} \frac{1}{t_n^2} \int_\Omega \bar p \left[ \zeta(\bar u; t_n, h_n) -\zeta(\bar u; t_n, v_n)   \right] \dx = 0
	\end{equation}
	and, in particular, 
	\begin{equation} \label{eq:wlsc-semi}
		\underline{Q}(\bar u, \bar p; \{t_n\},\{h_n\}) = \underline{Q}(\bar u, \bar p; \{t_n\},\{h\}) \geq \tilde{Q}(\bar u, \bar p; h).
	\end{equation}
\end{lemma}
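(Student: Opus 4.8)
The plan is to prove the key equality \eqref{eq:weak-invariant} first, and then derive \eqref{eq:wlsc-semi} from it essentially for free by taking limits inferior. For \eqref{eq:weak-invariant}, the idea is to identify what $\zeta(\bar u;t_n,h_n) - \zeta(\bar u;t_n,v_n)$ looks like asymptotically. Writing $y_n := S(\bar u + t_n h_n)$ and $w_n := S(\bar u + t_n v_n)$, we have $y_n = \bar y + t_n \delta_{h_n}^{(n)} + o(t_n)$ and $w_n = \bar y + t_n \delta_{v_n}^{(n)} + o(t_n)$ in $C(\overline\Omega)$, where by \cref{prop:control-to-state}\ref{it:dir-der} (the Hadamard property with weak convergence) both $(y_n-\bar y)/t_n$ and $(w_n-\bar y)/t_n$ converge to the \emph{same} limit $S'(\bar u;h)$ strongly in $C(\overline\Omega)$, since $h_n \weakto h$ and $v_n \weakto h$. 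In particular $\norm{y_n - w_n}_{C(\overline\Omega)} = o(t_n)$.

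First I would reduce, exactly as in the proof of \cref{lem:Q-defined}, using assumption \eqref{ass:G-diff} via the computation \eqref{eq:identity-key}, to
\[
	\int_\Omega \bar p\,\zeta(\bar u;t_n,h_n)\dx = \int_\Omega \bar p\, y_n\left(\1_{\{\bar y>0,\,y_n<0\}} - \1_{\{y_n>0,\,\bar y<0\}}\right)\dx,
\]
and similarly for $v_n$ with $w_n$. Then I would estimate the difference of the two integrands pointwise. On the set $\{0<|\bar y|<\epsilon_n\}$ with $\epsilon_n := \max\{\norm{y_n-\bar y}_{C(\overline\Omega)}, \norm{w_n-\bar y}_{C(\overline\Omega)}\}$ — outside of which both indicator expressions vanish, by the inclusion \eqref{eq:inclusion-key1} — the integrand difference is bounded in absolute value by $C(|y_n-\bar y| + |w_n-\bar y| + |y_n - w_n|) \le C\epsilon_n$ where the sign-switch sets can differ. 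Hence the $L^1$-norm of the difference is bounded by $C\norm{\bar p}_{L^\infty(\Omega)}\,\epsilon_n\cdot\meas(\{0<|\bar y|<\epsilon_n\}) \le C\,c_s\norm{\bar p}_{L^\infty(\Omega)}\,\epsilon_n^2$ by the structural assumption \eqref{eq:structure-non-diff}. Dividing by $t_n^2$ and using $\epsilon_n = o(t_n)$ gives \eqref{eq:weak-invariant}. From \eqref{eq:weak-invariant}, taking $v_n \equiv h$ (constant sequence), the two liminf's defining $\underline Q(\bar u,\bar p;\{t_n\},\{h_n\})$ and $\underline Q(\bar u,\bar p;\{t_n\},\{h\})$ coincide, and the inequality $\underline Q(\bar u,\bar p;\{t_n\},\{h\}) \ge \tilde Q(\bar u,\bar p;h)$ is immediate from the definition \eqref{eq:key-term} of $\tilde Q$ as an infimum over $\{t_n\}$.

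The main obstacle I anticipate is the bookkeeping in bounding the integrand difference on $\{0<|\bar y|<\epsilon_n\}$: the symmetric-difference of $\{\bar y>0,\,y_n<0\}$ and $\{\bar y>0,\,w_n<0\}$ must be controlled, and one has to be careful that even where the indicator is $1$ for $y_n$ but $0$ for $w_n$, the corresponding contribution $\bar p\,y_n$ is still $O(\epsilon_n)$ pointwise there (which holds because on that set $|\bar y|<\epsilon_n$, so $|y_n| \le |\bar y| + \epsilon_n \le 2\epsilon_n$). Once one sees that the \emph{entire} integrand of each term is supported in $\{0<|\bar y|<\epsilon_n\}$ and is $O(\epsilon_n)$ there — not just the difference — the estimate is essentially the same one already used in \cref{lem:Q-defined}, and the extra $t_n^{-2}$ is absorbed by $\epsilon_n^2 = o(t_n^2)$.
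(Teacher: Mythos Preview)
Your argument has a genuine gap at the very last step: you assert $\epsilon_n = o(t_n)$, but this is false. With $\epsilon_n := \max\{\norm{y_n-\bar y}_{C(\overline\Omega)}, \norm{w_n-\bar y}_{C(\overline\Omega)}\}$, the Hadamard property you quoted gives $(y_n-\bar y)/t_n \to S'(\bar u;h)$ in $C(\overline\Omega)$, hence $\epsilon_n/t_n \to \norm{S'(\bar u;h)}_{C(\overline\Omega)}$, which is in general nonzero. So $\epsilon_n$ is only $O(t_n)$, and your bound $c_s\norm{\bar p}_{L^\infty(\Omega)}\epsilon_n^2/t_n^2$ is merely $O(1)$; it does not tend to zero. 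In effect you have reproved \cref{lem:Q-defined} twice and subtracted, which cannot yield \eqref{eq:weak-invariant}.

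The missing idea is the one piece of information you correctly recorded but never used: $\norm{y_n - w_n}_{C(\overline\Omega)} = o(t_n)$. The paper exploits this by writing the difference of the two reduced integrands as $\bar p(A_n + B_n)$, where
\[
A_n := (y_n - w_n)\bigl(\1_{\{\bar y>0,\,y_n<0\}} - \1_{\{y_n>0,\,\bar y<0\}}\bigr)
\]
and $B_n$ collects the terms $w_n$ times the symmetric differences of the sign sets of $y_n$ and $w_n$. One then shows $|A_n| \le |y_n-w_n|\,\1_{\{0<|\bar y|<\epsilon_n\}}$ and, crucially, that on the symmetric-difference sets appearing in $B_n$ one has $|w_n| \le |y_n - w_n|$ pointwise (because, e.g., on $\{y_n<0 \le w_n\}$ we have $0 \le w_n \le w_n - y_n$), so that $|B_n| \le |y_n-w_n|\,\1_{\{0<|\bar y|\le \max(\epsilon_n,\kappa_n)\}}$ as well. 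The structural assumption then gives an $L^1$-bound of order $c_s\norm{y_n-w_n}_{C(\overline\Omega)}\cdot\max(\epsilon_n,\kappa_n)$, and dividing by $t_n^2$ this is $o(1)\cdot O(1) \to 0$. Your derivation of \eqref{eq:wlsc-semi} from \eqref{eq:weak-invariant} is fine.
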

\begin{proof}
	It suffices to prove \eqref{eq:weak-invariant}. To do this, we set $y_n := S(\bar u + t_n h_n)$, $z_n := S(\bar u + t_n v_n)$ and $\epsilon_n := \norm{y_n - \bar y}_{C(\overline\Omega)}$, $\kappa_n := \norm{z_n - \bar y}_{C(\overline\Omega)}$. Obviously, we see from \cref{prop:control-to-state} that 
	\begin{equation}
		\label{eq:limits-invar}
		\epsilon_n, \kappa_n \to 0^+ \quad \text{and} \quad \frac{\norm{y_n-z_n}_{C(\overline\Omega)}}{t_n} \to 0,
	\end{equation}
	as well as 
	\begin{equation} \label{eq:bounds}
		0 \leq \frac{\epsilon_n}{t_n}, \frac{\kappa_n}{t_n} \leq C
	\end{equation}
	for all $n\geq 1$ and for some constant $C$ independent of $n$.
	Analogous to \eqref{eq:identity-key}, we have
	\begin{align*}
		\bar p \zeta(\bar u; t_n, h_n) & = \bar p y_n \left( \1_{\{\bar y > 0, y_n < 0 \}}  - \1_{\{ y_n > 0, \bar y <0 \}} \right)\\
		\intertext{and}
		\bar p \zeta(\bar u; t_n, v_n) & = \bar p z_n \left( \1_{\{\bar y > 0, z_n < 0 \}}  - \1_{\{ z_n > 0, \bar y <0 \}} \right)
	\end{align*}
	for a.e. in $\Omega$ and for all $n \geq 1$. Subtracting these identities yields
	\begin{equation}
		\label{eq:iden-invariant}
		\bar p \left[\zeta(\bar u; t_n, h_n) -  \zeta(\bar u; t_n, v_n) \right]= \bar p (A_n + B_n)
	\end{equation} 
	for a.e. in $\Omega$ and for all $n \geq 1$. Here 
	\begin{equation*}
		A_n:=  (y_n - z_n)  \left( \1_{\{\bar y > 0, y_n < 0 \}}  - \1_{\{ y_n > 0, \bar y <0 \}} \right)
	\end{equation*}
	and
	\begin{equation*}
	 	B_n := z_n \1_{\{\bar y>0 \}}\left[ \1_{\{y_n < 0 \}} - \1_{\{z_n < 0 \}} \right] 
		- z_n \1_{\{\bar y <0 \}}\left[ \1_{\{y_n > 0 \}} - \1_{\{z_n > 0 \}} \right].
	\end{equation*}
	Similar to \eqref{eq:inclusion-key1}, there hold
	\begin{equation*}
		\{\bar y > 0, y_n < 0 \} \cup \{ y_n > 0, \bar y <0 \} \subset \{ 0< |\bar y| < \epsilon_n \}
	\end{equation*}
	and, consequently,
	\begin{equation*}
		|A_n| \leq |y_n - z_n| \1_{\{ 0< |\bar y| < \epsilon_n \}}.
	\end{equation*}	
	Applying the structural assumption \eqref{eq:structure-non-diff} yields
	\begin{equation} \label{eq:A-n}
		\norm{A_n}_{L^1(\Omega)} \leq c_s \norm{y_n  - z_n}_{C(\overline\Omega)} \epsilon_n
	\end{equation}
	for all $n$ large enough.
	On the other hand, $B_n$ can be rewritten in the form
	\begin{equation*} 		
		B_n = z_n \1_{\{\bar y>0 \}}\left[ \1_{\{y_n < 0, z_n \geq 0 \}} - \1_{\{z_n < 0, y_n \geq 0 \}} \right] 
		- z_n \1_{\{\bar y <0 \}}\left[ \1_{\{y_n > 0, z_n \leq 0 \}} - \1_{\{z_n > 0, y_n \leq 0 \}} \right].
	\end{equation*}
	For a.e. $x \in \{ \bar y >0  \} \cap \{y_n < 0, z_n \geq 0 \}$, we have
	\begin{equation*}
		0 \leq z_n(x) \leq |z_n(x) - y_n(x)| \quad \text{and} \quad 0 < \bar y(x) \leq \bar y(x) - y_n(x) \leq  \epsilon_n
	\end{equation*}
	and therefore
	\begin{equation*}
		 0 \leq z_n \1_{\{\bar y>0 \}}  \1_{\{y_n < 0, z_n \geq 0 \}} \leq |y_n-z_n| \1_{\{ 0 < \bar y \leq \epsilon_n \}}.
	\end{equation*}
	Similarly, there hold
	\begin{align*}
		& 0 \leq - z_n \1_{\{\bar y>0 \}}  \1_{\{y_n \geq 0, z_n < 0 \}} \leq |y_n-z_n| \1_{\{ 0< \bar y \leq \kappa_n \}},\\
		& 0 \leq  - z_n \1_{\{\bar y <0 \}}\1_{\{y_n > 0, z_n \leq 0 \}} \leq |y_n-z_n| \1_{\{ -\epsilon_n \leq \bar y <0 \}},\\
		\intertext{and}
		& 0 \leq z_n \1_{\{\bar y <0 \}}\1_{\{z_n > 0, y_n \leq 0 \}} \leq |y_n-z_n| \1_{\{ -\kappa_n \leq \bar y <0 \}}.
	\end{align*}
	These above estimates give
	\begin{equation*}
		|B_n| \leq |y_n - z_n| \1_{\{ 0< |\bar y| \leq \max(\epsilon_n, \kappa_n) \}},
	\end{equation*}
	which, together with \eqref{eq:structure-non-diff}, yields
	\begin{equation} \label{eq:B-n}
		\norm{B_n}_{L^1(\Omega)} \leq c_s \norm{y_n- z_n}_{C(\overline\Omega)} \max(\epsilon_n, \kappa_n)
	\end{equation}
	for all $n$ sufficiently large.
	Combing \eqref{eq:iden-invariant} with \eqref{eq:A-n} and \eqref{eq:B-n} gives
	\begin{equation*}
		\left| \int_\Omega \bar p \left[ \zeta(\bar u; t_n, h_n) -\zeta(\bar u; t_n, v_n)   \right] \dx \right|  \leq 2c_s \norm{\bar p}_{L^\infty(\Omega)} \norm{y_n- z_n}_{C(\overline\Omega)} \max(\epsilon_n, \kappa_n).
	\end{equation*}
	From this, \eqref{eq:limits-invar}, and \eqref{eq:bounds}, we derive \eqref{eq:weak-invariant}.
\end{proof}

As a result of \eqref{eq:wlsc-semi}, we provides here the vanishing of the term $\tilde{Q}$ for some special cases. 
In the following proposition, the continuity assumption of $\bar u$ can be attained under condition \eqref{eq:normal_OS} and some additional requirements on functions $\alpha$ and $\beta$, e.g., $(\alpha,\beta) =(-\infty, \infty)$ a.e. in $\Omega$ or $\alpha, \beta \in C(\overline\Omega)$. 
It is noted that we do not impose any condition on $\bar u$ on the boundary $\partial\Omega$. 

\begin{proposition}
	\label{prop:precise-form}
	Let \eqref{ass:G-diff} and \eqref{eq:structure-non-diff} be fulfilled. Then, for any $h \in L^2(\Omega)$, there holds
	\begin{equation}
		\label{eq:key-term-vanish}
		\tilde{Q}(\bar u,\bar p;h) = 0
	\end{equation}
	if $\bar u$ is continuous on $\Omega$ and if one of the following conditions holds
	\begin{enumerate}[label=(\roman*)]
		\item $\bar u =0$ a.e. in $\Omega$;
		\item $\bar u(x) >0$ for all $x \in \Omega$;
		\item $\bar u(x) <0$ for all $x \in \Omega$.
	\end{enumerate}
\end{proposition}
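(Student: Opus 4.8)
The plan is to show that under any of the three sign conditions on $\bar u$, the sets appearing in the definition of $\zeta$ are eventually empty, so that $\zeta(\bar u; t_n, h) = 0$ for $n$ large. By \cref{lem:invariant}, specifically \eqref{eq:wlsc-semi}, it suffices to work with the constant sequence $\{h\}$ and a fixed $\{t_n\} \in c_0^+$; then $\tilde Q(\bar u,\bar p; h) = \underline Q(\bar u, \bar p; \{t_n\},\{h\})$, and if the integrand vanishes for all large $n$ the limit inferior is $0$. Recall from \eqref{eq:identity-key} in the proof of \cref{lem:Q-defined} that, thanks to \eqref{ass:G-diff},
\begin{equation*}
	\bar p\, \zeta(\bar u; t_n, h) = \bar p\, y_n \left( \1_{\{\bar y > 0,\, y_n < 0\}} - \1_{\{y_n > 0,\, \bar y < 0\}} \right) \quad \text{a.e. in } \Omega,
\end{equation*}
with $y_n := S(\bar u + t_n h)$, so it is enough to show that $\{\bar y > 0, y_n < 0\}$ and $\{y_n > 0, \bar y < 0\}$ have measure zero for $n$ large.

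First I would treat case (i): if $\bar u = 0$ a.e. in $\Omega$, then $\bar y = S(0) = 0$ (the unique solution of the state equation with zero right-hand side), so both sets $\{\bar y > 0, \cdot\}$ and $\{\bar y < 0, \cdot\}$ are empty, and $\zeta \equiv 0$, giving \eqref{eq:key-term-vanish} trivially. For case (ii), assume $\bar u(x) > 0$ for all $x \in \Omega$. The maximum principle applied to the state equation $-\Delta \bar y + \max(0,\bar y) = \bar u > 0$ gives $\bar y > 0$ in $\Omega$ (test with $\bar y^-$: one gets $\|\nabla \bar y^-\|^2 + \|\1_{\{\bar y>0\}}\bar y^-\|^2 = -\int_\Omega \bar u\, \bar y^- \le 0$, forcing $\bar y^- = 0$; then strict positivity follows since $\bar u$ is continuous and strictly positive — or one simply uses that on $\{\bar y = 0\}$ one would need $\bar u = -\Delta\bar y$ which, by the behavior of $H^2$ functions on level sets as in \cref{prop:G-diff-exams}, vanishes, contradicting $\bar u > 0$). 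Hence $\{\bar y < 0\}$ is empty, killing the second indicator. For the first indicator, since $\bar y$ is continuous and strictly positive on $\Omega$ and $y_n \to \bar y$ uniformly on $\overline\Omega$ (\cref{prop:control-to-state}), for any compact $K \Subset \Omega$ we have $\inf_K \bar y > 0$, so $y_n > 0$ on $K$ for $n$ large; but since $\bar y \ge 0$ everywhere and $y_n \ge -\epsilon_n$ everywhere with $\epsilon_n \to 0$, the set $\{\bar y > 0, y_n < 0\}$ shrinks into $\{0 < \bar y < \epsilon_n\}$ which, being a decreasing family with empty intersection (as $\bar y > 0$ on $\Omega$), has measure tending to $0$. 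Combined with $|\bar p\, y_n| \le \|\bar p\|_{L^\infty}\|y_n\|_{L^\infty}$ bounded, this yields $\frac{1}{t_n^2}\int_\Omega \bar p\,\zeta(\bar u;t_n,h)\dx \to 0$ provided the measure goes to zero fast enough — here is where I would invoke \eqref{eq:structure-non-diff}: $\meas(\{0<\bar y<\epsilon_n\}) \le c_s\epsilon_n$ and $\epsilon_n = \|y_n - \bar y\|_{C(\overline\Omega)} \le C t_n$ by Lipschitz continuity of $S$, so the integrand is $O(t_n^2)/t_n^2 = O(1)$ — actually one needs the sharper bound $|\zeta| \le |y_n - \bar y|\1_{\{0<\bar y<\epsilon_n\}}$ as in \cref{lem:Q-defined}, giving $\|\bar p\,\zeta\|_{L^1} \le c_s\|\bar p\|_{L^\infty}\epsilon_n^2$ and hence the quotient is $O(1)$; to get the limit to be exactly $0$ I would need that $\epsilon_n^2/t_n^2 \to 0$ or refine further.

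Here I pause: the naive bound $\epsilon_n \le Ct_n$ only gives that the quotient is bounded, not that it vanishes. The resolution — and the main obstacle — is that under the strict sign condition the relevant sets are actually \emph{empty} for $n$ large, not merely small. Indeed if $\bar u > 0$ on $\Omega$ and $\bar u$ is continuous, I claim $\bar y$ does not just satisfy $\bar y \ge 0$ but $\bar y > 0$ on all of $\Omega$ with a quantitative lower bound away from $\partial\Omega$ is not enough; rather, I should argue that $\{\bar y > 0, y_n < 0\}$ is empty. Suppose $x$ lies in this set; then $\bar y(x) > 0 > y_n(x)$, so $|y_n(x) - \bar y(x)| > \bar y(x) > 0$. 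This does not immediately give a contradiction pointwise. The clean way is: on $\{\bar y = 0\}$ we have $\bar u = 0$ a.e. (by the level-set argument of \cref{prop:G-diff-exams}, using $H^2$-regularity of $\bar y$), which contradicts $\bar u > 0$ everywhere \emph{unless} $\meas\{\bar y = 0\} = 0$; combined with $\bar y \ge 0$, this means $\bar y > 0$ a.e. Then on the open set $\{\bar y > 0\}$ which is all of $\Omega$ up to null sets, and using that $\zeta$ is supported where the sign of $y_n$ and $\bar y$ disagree, the support sits in $\{0 < \bar y < \epsilon_n\}$, and I push through with the $\epsilon_n^2$ bound. Honestly, to nail the limit to $0$ rather than merely $O(1)$, I expect the intended argument uses $\eqref{eq:wlsc-semi}$ differently: one chooses the \emph{particular} sequence $t_n$ freely in the infimum defining $\tilde Q$, and for the sign-definite cases one shows $\zeta(\bar u; t_n, h) = 0$ identically for $n \ge n_0$ because $\|y_n - \bar y\|_{C(\overline\Omega)} < \inf_\Omega \bar y$ — which requires $\bar y$ to be \emph{bounded away from zero} on $\overline\Omega$, and that is false in general since $\bar y = 0$ on $\partial\Omega$. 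So the correct statement must restrict to compact subsets, and the contribution near $\partial\Omega$ is controlled by \eqref{eq:structure-non-diff}. Thus the final argument is: split $\Omega = K \cup (\Omega\setminus K)$; on $K$, $\zeta = 0$ for large $n$; on $\Omega\setminus K$, use $|\bar p\,\zeta| \le \|\bar p\|_{L^\infty}|y_n - \bar y|\1_{\{0<|\bar y|<\epsilon_n\}}$ and let $K \uparrow \Omega$. Case (iii) is symmetric to (ii), replacing $\bar y > 0$ by $\bar y < 0$ (test with $\bar y^+$). The main obstacle, then, is organizing the boundary-layer estimate so that the $\Omega\setminus K$ contribution, divided by $t_n^2$, tends to zero — which is exactly what \cref{lem:Q-defined} and \eqref{eq:structure-non-diff} are designed to deliver, so I would mirror that proof on the complement of $K$ and take $K \to \Omega$ at the end.
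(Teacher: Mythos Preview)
Your case (i) is fine (and in fact slightly more direct than the paper's, which uses \eqref{ass:G-diff} to get $\bar p=0$ a.e.\ rather than $\bar y=0$).

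In cases (ii) and (iii), however, your argument has a genuine gap. The compact-exhaustion approach you settle on --- split $\Omega = K \cup (\Omega\setminus K)$, get $\zeta = 0$ on $K$ for large $n$, and control $\Omega\setminus K$ via \eqref{eq:structure-non-diff} --- does not close. The bound you obtain on $\Omega\setminus K$ is exactly the one from \cref{lem:Q-defined}: $\|\bar p\,\zeta\|_{L^1} \le c_s\|\bar p\|_{L^\infty}\epsilon_n^2$, and after dividing by $t_n^2$ this is $O(1)$, not $o(1)$. Crucially, the constant $c_s$ in \eqref{eq:structure-non-diff} is \emph{global} and does not shrink as $K\uparrow\Omega$; the set $\{0<\bar y<\epsilon_n\}$ lives entirely in the boundary layer $\Omega\setminus K$ once $K$ is large enough, so its measure is still $\le c_s\epsilon_n$, not something smaller. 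You yourself note that \cref{lem:Q-defined} only gives boundedness --- that remains true on $\Omega\setminus K$, and letting $K\uparrow\Omega$ does not help.

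The missing idea, which the paper uses, is to approximate $h$ in $L^2(\Omega)$ by $h_m\in C_0^\infty(\Omega)$. On the compact support $K_m$ of $h_m$, the continuous function $\bar u$ is bounded below by some $\delta_m>0$, so $\bar u + t h_m \ge 0$ on all of $\Omega$ for $t$ small enough. The maximum principle for the \emph{state equation} (not just for $\bar y$) then gives $S(\bar u + t h_m)\ge 0$ everywhere, and combined with $\bar y\ge 0$ this forces $\zeta(\bar u; t, h_m) = 0$ identically --- no boundary layer at all. One then picks a subsequence $\{t_{n_m}\}$ along which the $\liminf$ is realised as a limit and with $t_{n_m}$ small enough, obtains $\underline Q(\bar u,\bar p;\{t_{n_m}\},\{h_m\}) = 0$, and finally transfers back to $h$ via the invariance statement \eqref{eq:wlsc-semi} in \cref{lem:invariant}. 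The point is that the perturbation of the \emph{control} is made compactly supported, so the sign condition on $\bar u$ can be exploited through the maximum principle on the perturbed state; your approach tries to localise in the \emph{state} variable instead, and that is where the uncontrolled boundary layer appears.
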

\begin{proof}
	\emph{(i)} Assume that $\bar u=0$ a.e. in $\Omega$. \eqref{ass:G-diff} then implies that $\bar p=0$ a.e. in $\Omega$. From this and the definition of $\tilde{Q}$, we have \eqref{eq:key-term-vanish}.
	
	\emph{(ii)} Assume that $\bar u(x) >0$ for all $x \in \Omega$ and $\bar u$ is continuous on $\Omega$. 	
	From the definition of $\tilde{Q}$, it suffices to show that 
	\begin{equation}
		\label{eq:vanish-Q}
		\underline{Q}(\bar u,\bar p;\{t_n\}, \{h\}) =0
	\end{equation}
	for any sequence $\{t_n\} \in c_0^+$. To this end, let $\{t_n\} \in c_0^+$ be arbitrary, but fixed.
	From this and the definition \eqref{eq:key-term-sn}, there exists a subsequence, denoted in the same way, of $\{n\}$ such that
	\begin{equation}
		\label{eq:vanish-Q2}
		\underline{Q}(\bar u,\bar p;\{t_n\}, \{h\}) = \lim\limits_{n\to \infty} \frac{1}{t_n^2} \int_\Omega \bar p \zeta(\bar u; t_n,h)dx.
	\end{equation}
	Due to the density of $C^\infty_0(\Omega)$ in $L^2(\Omega)$, a sequence $\{h_m\} \subset C^\infty_0(\Omega)$ exists and satisfies $h_m \to h$  in $L^2(\Omega)$ as $m\to \infty$. Fix $m$ and set
	\[
		\rho_m := \begin{cases}
			1 & \text{if } \max_{x \in K_m}|h_m(x)| = 0,\\
			\frac{\min_{x\in K_m}\bar u(x)}{\max_{x \in K_m}|h_m(x)|} & \text{if } \max_{x \in K_m}|h_m(x)| > 0
		\end{cases}
	\]
	with $K_m := \text{supp}(h_m)$. Since $K_m$ is a compact subset of $\Omega$ and $\bar u$ is continuous and positive on $\Omega$, one has $\rho_m >0$. 
	There thus holds that
	\begin{equation}
		\label{eq:almost-positive}
		\bar u(x) + t h_m(x) \geq 0 \quad \text{for all } t \in (0, \rho_m] \, \text{and } x \in \Omega.
	\end{equation}
	Since $t_n \to 0^+$ as $n \to \infty$, there exists a ${n_m} > n$ such that $t_{n_m} \in (0, \rho_m]$. We then derive
	\[
		\bar u(x) + t_{n_m}h_m(x) \geq 0 \quad \text{for all } x \in \Omega.
	\]
	Combining this and the maximum principle yields $S(u+t_{n_m}h_m) \geq 0$ a.e. in $\Omega$. On the other hand, since $\bar u >0$ a.e. in $\Omega$, there holds $\bar y \geq 0$ a.e. in $\Omega$. From this and the definition \eqref{eq:zeta-func}, we obtain $\zeta(\bar u; t_{n_m}, h_{m}) = 0$ and therefore
	\begin{equation} \label{eq:vanish3}
		\underline{Q}(\bar u,\bar p;\{t_{n_m}\}, \{h_{m}\}) = 0.
	\end{equation}
	Besides, \eqref{eq:vanish-Q2} gives
	\begin{align*}
		\underline{Q}(\bar u,\bar p;\{t_n\}, \{h\}) & = \lim\limits_{m\to \infty} \frac{1}{t_{n_m}^2} \int_\Omega \bar p \zeta(\bar u; t_{n_m},h)dx\\
		& = \underline{Q}(\bar u,\bar p;\{t_{n_m}\}, \{h\}).
	\end{align*}
	From this, \eqref{eq:wlsc-semi}, and the fact that $h_m \to h$ in $L^2(\Omega)$, we direve
	\[
		\underline{Q}(\bar u,\bar p;\{t_n\}, \{h\}) = \underline{Q}(\bar u,\bar p;\{t_{n_m}\}, \{h_m\}),
	\] 
	which, together with \eqref{eq:vanish3}, yields \eqref{eq:vanish-Q}.
	
	\emph{(iii)} Assume that $\bar u(x) <0$ for all $x \in \Omega$ and $\bar u$ is continuous on $\Omega$. Analogous to (ii), we also have \eqref{eq:key-term-vanish}.
\end{proof}

We now can employ \eqref{eq:wlsc-semi} to prove the weak lower semi-continuity in $h$ of $\tilde{Q}$. 
\begin{proposition}\label{prop:wlsc-key-term}
	If \eqref{ass:G-diff} and \eqref{eq:structure-non-diff} are satisfied, then 
	for any $h_n \rightharpoonup h$ in $L^2(\Omega)$,
	\begin{equation*}
		\tilde{Q}(\bar u,\bar p;h) \leq \liminf_{n \to \infty} \tilde{Q}(\bar u,\bar p;h_n).
	\end{equation*}
\end{proposition}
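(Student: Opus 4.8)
The plan is to combine the definition of $\tilde Q$ as an infimum over step-size sequences in $c_0^+$ with the invariance property \eqref{eq:wlsc-semi} from \cref{lem:invariant}, which says that $\underline Q(\bar u,\bar p;\{t_n\},\cdot)$ depends on its second argument only through its weak limit. This lets me replace the constant sequence $\{h\}$ hidden inside $\tilde Q(\bar u,\bar p;h)$ by the approximating sequence $\{h_n\}$, after which a near-infimum (diagonal-type) selection of step sizes closes the estimate.

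First I would note that, by \cref{lem:Q-defined}, every $\tilde Q(\bar u,\bar p;h_n)$ is a finite real number. Fixing $n$, the definition \eqref{eq:key-term} of $\tilde Q(\bar u,\bar p;h_n)$ as an infimum over $c_0^+$ provides $\{t_{n,k}\}_k\in c_0^+$ with
\[
  \liminf_{k\to\infty}\frac{1}{t_{n,k}^2}\int_\Omega \bar p\,\zeta(\bar u;t_{n,k},h_n)\dx \;\le\; \tilde Q(\bar u,\bar p;h_n) + \frac{1}{2n}.
\]
Hence infinitely many indices $k$ satisfy $\frac{1}{t_{n,k}^2}\int_\Omega \bar p\,\zeta(\bar u;t_{n,k},h_n)\dx \le \tilde Q(\bar u,\bar p;h_n) + \frac1n$, and since $t_{n,k}\to 0^+$ as $k\to\infty$ we may select such an index with $t_{n,k}<\frac1n$; call the resulting number $s_n$. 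Then $\{s_n\}\in c_0^+$ and
\[
  \frac{1}{s_n^2}\int_\Omega \bar p\,\zeta(\bar u;s_n,h_n)\dx \;\le\; \tilde Q(\bar u,\bar p;h_n) + \frac{1}{n}\qquad\text{for all }n.
\]

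Since both the constant sequence $\{h\}$ and the sequence $\{h_n\}$ converge weakly to $h$ in $L^2(\Omega)$, and $\{s_n\}\in c_0^+$, \eqref{eq:wlsc-semi} of \cref{lem:invariant} applies and yields
\[
  \tilde Q(\bar u,\bar p;h)\;\le\;\underline Q(\bar u,\bar p;\{s_n\},\{h\})\;=\;\underline Q(\bar u,\bar p;\{s_n\},\{h_n\})\;=\;\liminf_{n\to\infty}\frac{1}{s_n^2}\int_\Omega \bar p\,\zeta(\bar u;s_n,h_n)\dx,
\]
where the first inequality is the definition of $\tilde Q(\bar u,\bar p;h)$ as an infimum over $c_0^+$ and the last equality is the definition \eqref{eq:key-term-sn} of $\underline Q$. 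Combining this with the choice of $s_n$ and passing to the limit inferior gives
\[
  \tilde Q(\bar u,\bar p;h)\;\le\;\liminf_{n\to\infty}\Bigl(\tilde Q(\bar u,\bar p;h_n)+\frac1n\Bigr)\;=\;\liminf_{n\to\infty}\tilde Q(\bar u,\bar p;h_n),
\]
which is the claim. The only substantive ingredient here is the invariance \eqref{eq:wlsc-semi}, already established in \cref{lem:invariant} under \eqref{ass:G-diff} and \eqref{eq:structure-non-diff}; the remaining steps are a routine near-infimum/diagonal selection, so I do not anticipate any further difficulty. (If one prefers, one may first pass to a subsequence along which $\tilde Q(\bar u,\bar p;h_n)\to\liminf_n\tilde Q(\bar u,\bar p;h_n)$, but this is not needed.)
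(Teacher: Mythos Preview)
Your argument is correct and follows essentially the same route as the paper: a near-infimum/diagonal selection of step sizes $s_n$ followed by the invariance \eqref{eq:wlsc-semi} from \cref{lem:invariant}. Your version is in fact slightly more streamlined, collapsing the paper's two-stage approximation (first $\tilde Q\approx\underline Q$, then $\underline Q\approx$ integral) into a single selection step.
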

\begin{proof}
	The proof is similar to that in \cite[Prop.~5.6]{ClasonNhuRosch2020} with some slight modifications. For the convenience of the reader, we present here the detailed arguments.  
	Take $\{h_n\} \subset L^2(\Omega)$ arbitrarily such that $h_n \rightharpoonup h$ in $L^2(\Omega)$. 	
	Fixing $n \in\N$ and exploiting the definition \eqref{eq:key-term} yields that  sequences $\{t_j^k(h_n)\}_{j, k \in \N} \subset c_0^+$ exist and fulfill 
	\begin{equation} \label{eq:limit-wlsc}
		t_j^k(h_n) \to 0^+ \text{ as } j \to \infty \quad \text{for all } k \in\N
	\end{equation}
	and 
	\begin{equation*}
		\tilde{Q}(\bar u,\bar p;h_n) = \lim_{k \to \infty} \underline{Q}(\bar u,\bar p;\{t_j^k(h_n)\}_{j \in \N} , \{h_n\}_{j \in \N}).
	\end{equation*}
	This leads to the existence of a $k_n \geq n$ such that 
	\begin{equation}
		\label{eq:weakly-lsc-1}
		\tilde{Q}(\bar u,\bar p;h_n) - \underline{Q}(\bar u,\bar p;\{t_j^{k_n}(h_n)\}_{j \in \N} , \{h_n\}_{j \in \N}) > - \frac{1}{n}.
	\end{equation}
	From \eqref{eq:limit-wlsc}, there is a $j_n \in \N$ such that
	\begin{equation}
		\label{eq:lwsc-zero-sequence}
		0 < t_{j}^{k_n}(h_n) < \frac{1}{n} \quad \text{for all } j \geq j_n.
	\end{equation}
	Moreover, by virtue of \eqref{eq:key-term-sn}, a subsequence $\{t_{j_q}^{k_n}(h_n)\}_{q \in \N}$ of $\{t_j^{k_n}(h_n)\}_{j \in \N}$ exists and satisfies
	\begin{equation*}
		\underline{Q}(\bar u,\bar p;\{t_j^{k_n}(h_n)\}_{j \in \N} , \{h_n\}_{j \in \N}) = \lim_{q \to \infty} \frac{1}{(t_{j_q}^{k_n}(h_n))^2} \int_\Omega \bar p \zeta(\bar u; t_{j_q}^{k_n}(h_n), h_n)  \dx. 
	\end{equation*}	
	An integer $q_n \in \N$  therefore exists such that $j_{q_n} \geq j_n$ and
	\begin{equation}
		\label{eq:weakly-lsc-2}
		\underline{Q}(\bar u,\bar p;\{t_j^{k_n}(h_n)\}_{j \in \N} , \{h_n\}_{j \in \N}) - \frac{1}{r_n^2} \int_\Omega \bar p  \zeta(\bar u; r_n, h_n) \dx > - \frac{1}{n}
	\end{equation}
	with $r_n := t_{j_{q_n}}^{k_n}(h_n)$. Besides, we see from \eqref{eq:lwsc-zero-sequence} that $r_n \to 0^+$ as $n \to \infty$ and so $\{r_n\}_{n \in \N} \in c_0^+$. 	
	Furthermore, adding \eqref{eq:weakly-lsc-1} and \eqref{eq:weakly-lsc-2} yields that
	\begin{equation*}
		\tilde{Q}(\bar u,\bar p;h_n)  - \frac{1}{r_n^2} \int_\Omega \bar p  \zeta(\bar u; r_n, h_n) \dx  > -\frac{2}{n}.
	\end{equation*}	
	We now take the limit inferior to derive
	\begin{equation*} 
		\liminf _{n \to \infty} \tilde{Q}(\bar u,\bar p;h_n)  \geq \liminf _{n \to \infty}\frac{1}{r_n^2} \int_\Omega \bar p  \zeta(\bar u; r_n, h_n) \dx = \underline{Q}(\bar u, \bar p; \{r_n\}, \{h_n\})
	\end{equation*}
	which, in cooperation with \eqref{eq:wlsc-semi}, yields the claim.
\end{proof}

The weak lower semi-continuity of $\tilde{Q}$ leads to the same property of the non-smooth functional $Q$. 
\begin{corollary}
	\label{cor:wlsc-curvature}
	Under the assumptions of \cref{prop:wlsc-key-term}, $Q(\bar u, \bar p; \cdot)$ is weakly lower semi-continuous as a functional on $L^2(\Omega)$.
\end{corollary}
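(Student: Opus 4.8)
The plan is to decompose $Q(\bar u,\bar p;\cdot)$ into its three building blocks and to treat each of them separately along an arbitrary weakly convergent sequence $h_n \weakto h$ in $L^2(\Omega)$: the \emph{curvature integral} $\int_\Omega L''_{yy}(x,\bar y)\bigl(S'(\bar u;h)\bigr)^2\dx$, the \emph{Tikhonov term} $\nu\norm{h}_{L^2(\Omega)}^2$, and the \emph{generalized-derivative term} $2\tilde{Q}(\bar u,\bar p;h)$. The last of these is weakly lower semicontinuous by \cref{prop:wlsc-key-term}, and the Tikhonov term is weakly lower semicontinuous because the squared norm of a Hilbert space is. Only the curvature integral requires a small extra observation, since $L''_{yy}(x,\bar y)$ has no definite sign and hence this term is not lower semicontinuous by a convexity argument; instead I will show that it actually \emph{converges}.

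First I would record that, setting $M:=\norm{\bar y}_{C(\overline\Omega)}$, \cref{ass:standing} gives $\lvert L''_{yy}(x,\bar y(x))\rvert \leq \psi_M(x)$ with $\psi_M\in L^1(\Omega)$, so that $L''_{yy}(\cdot,\bar y)\in L^1(\Omega)$. By the weak–strong continuity of $S'(\bar u;\cdot)$ from $L^2(\Omega)$ to $H^1_0(\Omega)\cap C(\overline\Omega)$ (assertion \ref{it:wlsc} of \cref{prop:control-to-state}), $h_n\weakto h$ forces $S'(\bar u;h_n)\to S'(\bar u;h)$ strongly in $C(\overline\Omega)$, whence $\bigl(S'(\bar u;h_n)\bigr)^2\to\bigl(S'(\bar u;h)\bigr)^2$ uniformly on $\overline\Omega$. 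Combining the uniform convergence with $L''_{yy}(\cdot,\bar y)\in L^1(\Omega)$ yields
\[
\int_\Omega L''_{yy}(x,\bar y)\bigl(S'(\bar u;h_n)\bigr)^2\dx \longrightarrow \int_\Omega L''_{yy}(x,\bar y)\bigl(S'(\bar u;h)\bigr)^2\dx .
\]

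Finally I would assemble the three facts. Writing $Q(\bar u,\bar p;h_n)$ as the sum of the (now convergent) curvature integral, the Tikhonov term, and $2\tilde{Q}(\bar u,\bar p;h_n)$, and using superadditivity of the limit inferior together with convergence of the first summand, I obtain
\[
\liminf_{n\to\infty} Q(\bar u,\bar p;h_n) \geq \int_\Omega L''_{yy}(x,\bar y)\bigl(S'(\bar u;h)\bigr)^2\dx + \nu\liminf_{n\to\infty}\norm{h_n}_{L^2(\Omega)}^2 + 2\liminf_{n\to\infty}\tilde{Q}(\bar u,\bar p;h_n).
\]
Now weak lower semicontinuity of $\norm{\cdot}_{L^2(\Omega)}^2$ and \cref{prop:wlsc-key-term} bound the right-hand side below by $\int_\Omega L''_{yy}(x,\bar y)\bigl(S'(\bar u;h)\bigr)^2\dx + \nu\norm{h}_{L^2(\Omega)}^2 + 2\tilde{Q}(\bar u,\bar p;h) = Q(\bar u,\bar p;h)$, which is the assertion. (The limits inferior above are finite — in particular not $-\infty$ — because $\tilde{Q}(\bar u,\bar p;h_n)$ is bounded thanks to \cref{lem:Q-defined} and the $C(\overline\Omega)$-boundedness of $\{S'(\bar u;h_n)\}$.)

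There is essentially no deep obstacle here: the substantive work is already contained in \cref{prop:wlsc-key-term}. The one point that deserves care, and the only place a careless argument would break down, is that the curvature integral cannot be treated by semicontinuity alone because the sign of $L''_{yy}(\cdot,\bar y)$ is uncontrolled; one genuinely needs the \emph{strong} convergence $S'(\bar u;h_n)\to S'(\bar u;h)$ in $C(\overline\Omega)$ furnished by \cref{prop:control-to-state}, rather than merely the weak convergence of $h_n$.
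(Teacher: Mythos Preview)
Your proof is correct and follows exactly the same approach as the paper's own proof, which simply cites \cref{prop:wlsc-key-term}, assertion \ref{it:wlsc} of \cref{prop:control-to-state}, and the weak lower semicontinuity of the $L^2$-norm. You have merely spelled out the details the paper leaves implicit, in particular the observation that the curvature integral actually converges (rather than being merely lower semicontinuous) thanks to the uniform convergence of $S'(\bar u;h_n)$ and the $L^1$-bound on $L''_{yy}(\cdot,\bar y)$.
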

\begin{proof}
	The desired conclusion follows directly from \cref{prop:wlsc-key-term}, assertion (iv) in \cref{prop:control-to-state}, and the weak lower semi-continuity of the norm in $L^2(\Omega)$.
\end{proof}

\subsection{Second-order conditions}\label{sec:2nd-OC:soc}
 
We start this subsection with a second-order Taylor-type expansion.
\begin{lemma}
	\label{lem:2nd-expression}
	For any $u \in L^2(\Omega)$ and $y_u:= S(u)$, there holds for any $t>0$ that
	\begin{multline} \label{eq:2nd-Taylor-expansion}
		j(u) - j(\bar u) = \int_\Omega\int_0^1 (1-s)L''_{yy}(x,\bar y + s(y_u -\bar y))(y_u - \bar y)^2 \ds \dx \\
		\begin{aligned}[t]
			& + \frac{\nu}{2} \norm{u - \bar u}_{L^2(\Omega)}^2+ \int_\Omega \bar d(u - \bar u)\dx + \int_\Omega \bar p \zeta\left(\bar u; t, \frac{u-\bar u}{t}\right) \dx,
		\end{aligned}
	\end{multline}
	where $\bar p$ and $\bar d$ are defined in \eqref{eq:objective-der}.
\end{lemma}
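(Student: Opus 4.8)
The plan is to compute $j(u) - j(\bar u)$ directly from the definition $j(v) = \int_\Omega L(x, S(v))\dx + \frac{\nu}{2}\norm{v}_{L^2(\Omega)}^2$, splitting it into the Tikhonov part and the tracking part. For the Tikhonov part, the identity $\frac{\nu}{2}\norm{u}_{L^2}^2 - \frac{\nu}{2}\norm{\bar u}_{L^2}^2 = \frac{\nu}{2}\norm{u-\bar u}_{L^2}^2 + \nu\int_\Omega \bar u(u-\bar u)\dx$ is just the completion of the square and needs no comment. For the tracking part, I would apply the second-order Taylor formula with integral remainder to the $C^2$-function $y \mapsto L(x,y)$ between $\bar y(x)$ and $y_u(x)$, giving
\begin{equation*}
	L(x, y_u) - L(x,\bar y) = L'_y(x,\bar y)(y_u - \bar y) + \int_0^1 (1-s) L''_{yy}(x, \bar y + s(y_u - \bar y))(y_u - \bar y)^2\ds,
\end{equation*}
and integrate over $\Omega$; the growth bounds in \cref{ass:standing} together with the $C(\overline\Omega)$-bound on states justify all the integrals. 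This already produces the second-order remainder term and the linear term $\int_\Omega L'_y(x,\bar y)(y_u - \bar y)\dx$.

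The heart of the argument is then to rewrite $\int_\Omega L'_y(x,\bar y)(y_u - \bar y)\dx$ in terms of $\bar d = \bar p + \nu\bar u$ and the $\zeta$-term. The idea is exactly the duality/testing computation already used in the proof of \cref{thm:G-diff-characterization}: test the adjoint equation \eqref{eq:adjoint} with $y_u - \bar y$ and test the equation satisfied by $y_u - \bar y$ with $\bar p$, then subtract. Subtracting the state equations for $y_u$ and $\bar y$ gives
\begin{equation*}
	-\Delta(y_u - \bar y) + \max(0,y_u) - \max(0,\bar y) = u - \bar u \quad \text{in } \Omega,
\end{equation*}
with zero boundary data, so testing with $\bar p$ yields $\int_\Omega \nabla\bar p\cdot\nabla(y_u-\bar y)\dx + \int_\Omega \bar p(\max(0,y_u)-\max(0,\bar y))\dx = \int_\Omega \bar p(u-\bar u)\dx$. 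Testing \eqref{eq:adjoint} with $y_u - \bar y$ gives $\int_\Omega \nabla\bar p\cdot\nabla(y_u-\bar y)\dx + \int_\Omega \1_{\{\bar y\geq 0\}}\bar p(y_u-\bar y)\dx = \int_\Omega L'_y(x,\bar y)(y_u-\bar y)\dx$. Subtracting, the gradient terms cancel and one obtains
\begin{equation*}
	\int_\Omega L'_y(x,\bar y)(y_u-\bar y)\dx = \int_\Omega \bar p(u-\bar u)\dx + \int_\Omega \bar p\left[\1_{\{\bar y\geq 0\}}(y_u-\bar y) - (\max(0,y_u)-\max(0,\bar y))\right]\dx.
\end{equation*}
It then remains to check the pointwise algebraic identity $\1_{\{\bar y\geq 0\}}(y_u-\bar y) - \max(0,y_u) + \max(0,\bar y) = y_u(\1_{\{\bar y\geq 0\}} - \1_{\{y_u\geq 0\}})$, i.e. that the bracketed integrand equals $\zeta(\bar u; t, (u-\bar u)/t)$ pointwise (note the $t$'s cancel inside $\zeta$ since $S(\bar u + t\cdot\frac{u-\bar u}{t}) = S(u) = y_u$, so the value is independent of $t$). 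Using $\max(0,y) = \1_{\{y\geq 0\}}y = \1_{\{y>0\}}y$ one verifies this by a short case distinction on the signs of $\bar y$ and $y_u$.

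Assembling the Tikhonov part, the Taylor expansion of the tracking part, and the rewritten linear term, and collecting $\int_\Omega \bar p(u-\bar u)\dx + \nu\int_\Omega\bar u(u-\bar u)\dx = \int_\Omega \bar d(u-\bar u)\dx$, gives exactly \eqref{eq:2nd-Taylor-expansion}. I expect no real obstacle here: the only mildly delicate points are (a) confirming that all integrands are integrable, which follows from $y_u, \bar y \in C(\overline\Omega)$ and the majorants $\phi_M, \psi_M$ of \cref{ass:standing}, and (b) the pointwise sign-case identity for the $\zeta$-term, which is elementary but must be written out carefully because the factor $\1_{\{\bar y\geq 0\}}$ (closed half-line) rather than $\1_{\{\bar y>0\}}$ matters on $\{\bar y = 0\}$.
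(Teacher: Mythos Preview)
Your proposal is correct and follows essentially the same route as the paper's proof: Taylor-expand the tracking term, complete the square in the Tikhonov term, and then test the adjoint equation \eqref{eq:adjoint} against $y_u-\bar y$ and the subtracted state equations against $\bar p$ to convert $\int_\Omega L'_y(x,\bar y)(y_u-\bar y)\dx - \int_\Omega \bar p(u-\bar u)\dx$ into the $\zeta$-term via the pointwise identity $\1_{\{\bar y\geq 0\}}(y_u-\bar y)-\max(0,y_u)+\max(0,\bar y)=y_u(\1_{\{\bar y\geq 0\}}-\1_{\{y_u\geq 0\}})$. Your observation that the $t$'s cancel inside $\zeta$ (so the right-hand side is independent of $t>0$) is exactly what the paper uses implicitly.
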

\begin{proof}
	Exploiting a Taylor expansion yields
	\begin{multline} \label{eq:obj-diff}
		j(u) - j(\bar u)  =  \int_\Omega \left[L(x,y_u) - L(x, \bar y) \right]\dx + \frac{\nu}{2} \int_\Omega (u^2 - \bar u^2)\dx \\
		\begin{aligned}[b]				
				& = \int_\Omega L'_y(x, \bar y)(y_u - \bar y) \dx +\nu \int_\Omega  \left( u - \bar u \right)  \bar u\dx  + \frac{\nu}{2} \norm{u - \bar u}_{L^2(\Omega)}^2 \\
				\MoveEqLeft[-1] + \int_\Omega \int_0^1 (1-s)L''_{yy}(x,\bar y + s(y_u -\bar y))(y_u - \bar y)^2 \ds\dx  \\
				& = \int_\Omega L'_y(x, \bar y)(y_u - \bar y) \dx - \int_\Omega \bar p \left( u - \bar u \right)  \dx + \int_\Omega \bar d(u - \bar u)\dx  \\
				\MoveEqLeft[-1] + \int_\Omega \int_0^1 (1-s)L''_{yy}(x,\bar y + s(y_u -\bar y))(y_u - \bar y)^2 \ds\dx + \frac{\nu}{2} \norm{u - \bar u}_{L^2(\Omega)}^2 ,
	\end{aligned}
	\end{multline}
	where we have just used \eqref{eq:objective-der} to derive the last equality. 	
	Testing now the state equations for $y_u$ and $\bar y$ by $\bar p$ and then subtracting the obtained results gives
	\begin{equation*}
		\int_\Omega \bar p (u - \bar u) \dx = \int_\Omega \nabla (y_u -  \bar y)  \cdot \nabla \bar p + \bar p \left[\max(0,y_u) - \max(0, \bar y)\right] \dx.
	\end{equation*}
	Since $\bar p$ satisfies \eqref{eq:adjoint}, there holds
	\begin{equation*}
		\int_\Omega L'_y(x, \bar y)(y_u - \bar y) \dx = \int_\Omega \nabla (y_u -  \bar y)  \cdot \nabla \bar p +\1_{\{\bar y\geq 0\}} \bar p (y_u - \bar y) \dx.
	\end{equation*}
	It then follows that
	\begin{multline*}
		\int_\Omega L'_y(x, \bar y)(y_u - \bar y) \dx - \int_\Omega \bar p \left( u - \bar u \right)  \dx\\
		\begin{aligned}[t]
		 	&= \int_\Omega \bar p \left[ \1_{\{\bar y\geq 0\}}(y_u - \bar y) - \max(0,y_u) + \max(0, \bar y)\right]\dx \\
			& = \int_\Omega \bar p y_u \left[ \1_{\{\bar y\geq 0\}} - \1_{\{y_u \geq 0 \}} \right] \dx\\
			& = \int_\Omega \bar p \zeta\left(\bar u; t,\frac{u -\bar u}{t}\right) \dx,
		\end{aligned}
	\end{multline*}	
	where the last equality has been derived using \eqref{eq:zeta-func}. 
	Inserting this equality into \eqref{eq:obj-diff}, we arrive at the desired conclusion.
\end{proof}

We finally recall the following basic notion standard in the study of second-order conditions for optimal problem with control constraints; see, e.g.,\cite{BonnansShapiro2000,BayenBonnansSilva2014}.
Let $K$ be a closed subset in $L^2(\Omega)$ and let $z \in K$ be arbitrary. 
The symbols
\begin{align*}
    & \mathcal{R}(K; z) := \left\{ h \in L^2(\Omega)\,\middle|\, \exists \bar t > 0 \text{ s.t. } z + t h \in K\ \,\forall t \in [0,\bar t] \right\},\\
    & \mathcal{T}(K; z) := \left\{ h \in L^2(\Omega)\,\middle|\, \exists t_n \to  0^+, h_n \to h \text{ in } L^2(\Omega) \text{ s.t. } z + t_n h_n \in K \,\forall n \in\N  \right\},\\
    \intertext{and}
    & \mathcal{N}(K; z) := \left\{ w \in L^2(\Omega) \,\,\middle|\, \int_\Omega w(x) h(x)\dx \leq 0 \,\forall h \in \mathcal{T}(K; z) \right\}
\end{align*}
stand, respectively, for the \emph{radial}, \emph{contingent (Bouligand) tangent}, and \emph{normal} cones to $K$ at $z$.
If  $K$ is convex, then 
\begin{equation*}
    \mathcal{T}(K; z) = \text{cl}_2\left[ \mathcal{R}(K; z) \right],
\end{equation*}
where cl$_2(M)$ denotes the closure of a set $M$ in $L^2(\Omega)$; see, e.g., \cite{BonnansShapiro2000,AubinFrankowska2009}.
For any $w \in L^2(\Omega)$, by $w^\bot$ we denote the annihilator of $w$, i.e.,
\begin{equation*}
    w^\bot := \left\{ v \in L^2(\Omega) \,\middle|\, \int_\Omega w(x)v(x)\dx = 0 \right\}.
\end{equation*}
Moreover, the set $K$ is said to be \emph{polyhedric at $z \in K$} if for any $w \in \mathcal{N}(K;z)$, there holds
\begin{equation*}
    \text{cl}_2\left[ \mathcal{R}(K; z) \cap (w^\bot) \right] = \mathcal{T}(K; z) \cap (w^\bot).
\end{equation*}
We say the set $K$ \emph{polyhedric} if it is polyhedric at each point $z \in K$.

The following result stating the polyhedricity of the feasible set $\mathcal{U}_{ad}$ shall be employed to prove the second-order necessary optimality conditions for \eqref{eq:P}.
\begin{lemma}
	\label{lem:polyhedricity}
	The admissible set $\mathcal{U}_{ad}$ is polyhedric.
\end{lemma}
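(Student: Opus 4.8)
The plan is to treat the two possible forms of $\mathcal{U}_{ad}$ separately. In the unconstrained case $\alpha=-\infty$, $\beta=\infty$, the admissible set equals all of $L^2(\Omega)$, which is trivially polyhedric: for every $z\in L^2(\Omega)$ we have $\mathcal{R}(\mathcal{U}_{ad};z)=\mathcal{T}(\mathcal{U}_{ad};z)=L^2(\Omega)$ and the normal cone is $\{0\}$, so the defining identity $\mathrm{cl}_2[\mathcal{R}(K;z)\cap w^\bot]=\mathcal{T}(K;z)\cap w^\bot$ holds automatically for the only admissible $w=0$. So the substance is the box-constrained case $\alpha,\beta\in L^2(\Omega)$ with $\beta-\alpha\ge\gamma>0$.

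In that case the strategy is to invoke the classical fact (going back to Mignot, and stated e.g.\ in \cite{BonnansShapiro2000,WachsmuthWachsmuth2011}) that $L^2$-boxes of the form $\{u : \alpha\le u\le\beta \text{ a.e.}\}$ are polyhedric. First I would fix $z\in\mathcal{U}_{ad}$ and $w\in\mathcal{N}(\mathcal{U}_{ad};z)$ and identify all three cones pointwise. The radial cone consists of those $h$ for which, on the active sets $\{z=\alpha\}$ and $\{z=\beta\}$, one has $h\ge0$ resp.\ $h\le0$, with in addition a uniform ``room'' condition so that $z+th$ stays feasible for small $t$; concretely $h\ge0$ a.e.\ on $\{z=\alpha\}$, $h\le0$ a.e.\ on $\{z=\beta\}$, and $h$ vanishes (or is suitably controlled) where the distance of $z$ to the obstacles is not bounded below. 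The contingent cone $\mathcal{T}(\mathcal{U}_{ad};z)$ is the $L^2$-closure of this, and because $\mathcal{U}_{ad}$ is convex this closure description is exactly the one recalled just before the lemma. The normal cone $\mathcal{N}(\mathcal{U}_{ad};z)$ consists of $w$ with $w\le0$ a.e.\ on $\{z=\alpha\}$, $w\ge0$ a.e.\ on $\{z=\beta\}$, and $w=0$ a.e.\ on the inactive set $\{\alpha<z<\beta\}$; this is the standard KKT description and follows by testing the inequality $\int_\Omega w h\le0$ against $h=\pm\1_\omega$ for measurable $\omega$ contained in the respective sets.

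The key step is then to show $\mathcal{T}(\mathcal{U}_{ad};z)\cap w^\bot\subset\mathrm{cl}_2[\mathcal{R}(\mathcal{U}_{ad};z)\cap w^\bot]$ (the reverse inclusion is automatic from $\mathcal{R}\subset\mathcal{T}$ and continuity of the linear functional $v\mapsto\int w v$). Take $h\in\mathcal{T}(\mathcal{U}_{ad};z)\cap w^\bot$. Because $h\in w^\bot$ and the sign conditions on $h$ (from $\mathcal{T}$) and on $w$ (from $\mathcal{N}$) are opposite on each active set, the integrand $wh$ has a fixed sign on $\{z=\alpha\}$, the opposite fixed sign on $\{z=\beta\}$, and vanishes where $w=0$; so $\int_\Omega wh=0$ forces $wh=0$ a.e., i.e.\ $h=0$ a.e.\ on $\{w\ne0\}$. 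I would then approximate $h$ by the truncations $h_k:=\1_{A_k}\,\mathrm{med}(-k,h,k)$ where $A_k:=\{\mathrm{dist}(z,\{\alpha,\beta\})\ge 1/k\}\cup(\{z=\alpha\}\cap\{0\le h\})\cup(\{z=\beta\}\cap\{h\le0\})$ — more simply, set $h_k:=\mathbbm{1}_{\{1/k\le z-\alpha,\ z-\beta\le-1/k\}\cup E}\, h$ with $E$ the part of the active set where $h$ already has the feasible sign and is bounded by $k$ — so that $z+t h_k\in\mathcal{U}_{ad}$ for $t\le 1/(k\|h_k\|_{L^\infty})$ small, hence $h_k\in\mathcal{R}(\mathcal{U}_{ad};z)$, and $h_k$ still vanishes on $\{w\ne0\}$ so $h_k\in w^\bot$. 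Dominated convergence gives $h_k\to h$ in $L^2(\Omega)$, which yields the claimed inclusion and the polyhedricity.

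\textbf{Main obstacle.} The routine but delicate point is the truncation/approximation construction: one must simultaneously (a) keep $z+th_k$ feasible for a positive radius, which requires cutting off on the set where $z$ is too close to an obstacle \emph{and} controlling $\|h_k\|_{L^\infty}$, and (b) preserve membership in $w^\bot$ exactly, which is why it is essential to have first derived $h=0$ a.e.\ on $\{w\ne0\}$ from the sign structure — only then does restricting the support of the approximants cause no violation of the orthogonality constraint. Verifying that these choices indeed give $h_k\in\mathcal{R}(\mathcal{U}_{ad};z)\cap w^\bot$ and $h_k\to h$ in $L^2$ is the heart of the argument; everything else is bookkeeping with the pointwise descriptions of the cones. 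Alternatively, one may simply cite the polyhedricity of $L^2$-box constraints from \cite{BonnansShapiro2000} or \cite{WachsmuthWachsmuth2011} and note that the unconstrained case is trivial.
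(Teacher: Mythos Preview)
Your proposal is correct and follows the same case distinction as the paper: trivial when $\mathcal{U}_{ad}=L^2(\Omega)$, and a citation for the box-constrained case. The paper's own proof is two sentences and simply invokes \cite[Lem.~4.13]{BayenBonnansSilva2014} for the box case, whereas you additionally sketch the underlying Mignot-type argument (pointwise cone identification, the sign argument forcing $h=0$ on $\{w\neq0\}$, and the truncation approximation); this extra detail is sound but not needed here, and your closing remark that one may ``simply cite the polyhedricity of $L^2$-box constraints'' is exactly what the paper does.
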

\begin{proof}
	For the case $\alpha = -\infty$ and $\beta = \infty$ a.e. in $\Omega$, we obviously have $\mathcal{U}_{ad} = L^2(\Omega)$ and thus $\mathcal{U}_{ad}$ is polyhedric. On the other hand, for the case $\alpha, \beta \in L^2(\Omega)$, the set $\mathcal{U}_{ad}$ is also polyhedric due to, for example,  \cite[Lem.~4.13]{BayenBonnansSilva2014}.
\end{proof}

We are now ready to prove the following two theorems, which are the main results of the paper and provide a no-gap theory of second-order necessary and sufficient conditions in terms of the non-smooth functional $Q$ defined in \cref{def:curvature-func}.

\begin{theorem}[second-order necessary optimality condition] 
    \label{thm:2nd-OS-nec}
    Assume that $\bar u$ is a local minimizer of \eqref{eq:P2} and that $j$ is G\^{a}teaux-differentiable in $\bar u$. Assume further that $\bar y:= S(\bar u)$ satisfies the structural assumption \eqref{eq:structure-non-diff}. Then the adjoint state $\bar p:= G_{\1_{\{ \bar y \geq 0 \}}}\left(L'_y(\cdot, \bar y) \right) \in H^1_0(\Omega) \cap C(\overline\Omega)$ fulfills \eqref{eq:1st-OS-relaxed} and \eqref{ass:G-diff} is satisfied. Moreover, the following second-order necessary optimality condition holds:
    \begin{equation}
        \label{eq:2nd-OS-nec}
        Q(\bar u, \bar p;h)  \geq 0 \qquad\text{for all }h\in \mathcal{C}(\mathcal{U}_{ad};\bar u).
    \end{equation}
    Here the \emph{critical} cone $\mathcal{C}(\mathcal{U}_{ad};\bar u)$ is defined via
    \begin{equation*}
	    \mathcal{C}(\mathcal{U}_{ad};\bar u) := \left\{ h \in L^2(\Omega) \,\middle|\, h(x) 
	    \begin{cases}
	    	\geq 0 & \text{if }  \bar u(x) = \alpha(x)\\
	    	\leq 0 & \text{if }  \bar u(x) = \beta(x) \\
	    	= 0 & \text{if }  \bar d(x) \neq 0
	    \end{cases} 
	    \text{a.e. } x \in \Omega 
	    \right\}
    \end{equation*}
    with $\bar d$ given in \eqref{eq:objective-der}.
\end{theorem}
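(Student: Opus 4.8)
The plan is to combine the second-order Taylor-type expansion from \cref{lem:2nd-expression} with the polyhedricity of $\mathcal{U}_{ad}$ established in \cref{lem:polyhedricity}. First, the claims that $\bar p$ satisfies \eqref{eq:1st-OS-relaxed} and that \eqref{ass:G-diff} holds are immediate from \cref{thm:1st-OS-relaxed}, since $\bar u$ is a local minimizer and $j$ is G\^{a}teaux-differentiable in $\bar u$. It then remains to prove \eqref{eq:2nd-OS-nec}.

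Fix $h\in \mathcal{C}(\mathcal{U}_{ad};\bar u)$. The first step is to realize $h$ as a tangential direction along which admissible variations exist so that, using polyhedricity, one can approximate $h$ by \emph{radial} critical directions. Concretely, since $\bar u$ is a local minimizer and $j$ is G\^{a}teaux-differentiable with $j'(\bar u)v = \int_\Omega \bar d\, v\dx$, the condition \eqref{eq:normal_OS-relaxed} says $\bar d \in \mathcal{N}(\mathcal{U}_{ad};\bar u)$, so $\mathcal{C}(\mathcal{U}_{ad};\bar u) = \mathcal{T}(\mathcal{U}_{ad};\bar u)\cap (\bar d)^\bot$ (the explicit pointwise description of the critical cone is exactly this intersection). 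By polyhedricity, $\mathcal{C}(\mathcal{U}_{ad};\bar u) = \mathrm{cl}_2[\mathcal{R}(\mathcal{U}_{ad};\bar u)\cap (\bar d)^\bot]$; so I would pick a sequence $h_k \to h$ in $L^2(\Omega)$ with each $h_k \in \mathcal{R}(\mathcal{U}_{ad};\bar u)\cap(\bar d)^\bot$. For such $h_k$ there is $\bar t_k>0$ with $\bar u + t h_k \in \mathcal{U}_{ad}$ for all $t\in[0,\bar t_k]$, and $\int_\Omega \bar d\, h_k\dx = 0$.

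The second, central step is to apply \eqref{eq:2nd-Taylor-expansion} with $u = \bar u + t h_k$. Local optimality gives $j(\bar u + t h_k) - j(\bar u)\ge 0$ for $t$ small. Dividing by $t^2$, using $\int_\Omega \bar d\, h_k\dx = 0$ to kill the first-order term, and rearranging, I get
\begin{equation*}
	0 \le \frac{1}{t}\int_\Omega\!\!\int_0^1 (1-s)L''_{yy}(x,\bar y + s(y_t-\bar y))\Big(\frac{y_t-\bar y}{t}\Big)^2\! t\,\ds\dx + \frac{\nu}{2}\norm{h_k}_{L^2(\Omega)}^2 + \frac{1}{t^2}\int_\Omega \bar p\,\zeta(\bar u;t,h_k)\dx,
\end{equation*}
where $y_t := S(\bar u + t h_k)$; more cleanly, after multiplying through by $2$, the right-hand side equals $\frac{1}{t^2}\cdot 2t^2$ times the Taylor remainder. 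Now let $t = t_n\to 0^+$ along a sequence realizing the $\liminf$ in $\tilde Q(\bar u,\bar p;h_k)$ (or any sequence and then take the infimum). Since $S$ is Hadamard directionally differentiable with the strong convergence \eqref{eq:weak-strong-Hadamard}, $(y_{t_n}-\bar y)/t_n \to S'(\bar u;h_k)$ in $C(\overline\Omega)$, and by the local boundedness and continuity assumptions on $L''_{yy}$ (\cref{ass:standing}) together with dominated convergence, the integral term converges to $\int_\Omega L''_{yy}(x,\bar y)(S'(\bar u;h_k))^2\dx$. Taking $\liminf_{n\to\infty}$ and then the infimum over admissible $\{t_n\}$ yields $0\le \frac12 Q(\bar u,\bar p;h_k)$, i.e. $Q(\bar u,\bar p;h_k)\ge 0$.

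The final step is passage to the limit $k\to\infty$. Here I use \cref{cor:wlsc-curvature}: $Q(\bar u,\bar p;\cdot)$ is weakly (hence strongly) lower semi-continuous on $L^2(\Omega)$, so $Q(\bar u,\bar p;h) \le \liminf_{k\to\infty} Q(\bar u,\bar p;h_k)$; combined with $Q(\bar u,\bar p;h_k)\ge 0$ this gives $Q(\bar u,\bar p;h)\ge 0$. The main obstacle I anticipate is the bookkeeping of the double limit in $n$ (the sequence $t_n\to 0^+$ defining $\tilde Q$) and $k$ (the radial approximation): one must be careful that the limit of the $L''_{yy}$-term and the $\tilde Q$-term can be taken in a compatible order, which is where \cref{lem:invariant}/\eqref{eq:wlsc-semi} and the weak lower semi-continuity of $\tilde Q$ (\cref{prop:wlsc-key-term}) do the heavy lifting — in particular, for the radial directions $h_k$ the constant-sequence form $\tilde Q(\bar u,\bar p;h_k)$ is exactly what appears, and \eqref{eq:wlsc-semi} ensures the $\liminf$ over any $\{t_n\}$ dominates it, so no spurious loss occurs when passing to the infimum.
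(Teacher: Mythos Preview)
Your argument is sound up through the point where you establish $Q(\bar u,\bar p;h_k)\ge 0$ for each radial critical direction $h_k$. The gap is in the final step: weak lower semi-continuity of $Q$ gives the inequality
\[
Q(\bar u,\bar p;h)\ \le\ \liminf_{k\to\infty} Q(\bar u,\bar p;h_k),
\]
and from $Q(\bar u,\bar p;h_k)\ge 0$ you only obtain that the right-hand side is nonnegative. This does \emph{not} imply $Q(\bar u,\bar p;h)\ge 0$; the inequality points the wrong way. What you would need is upper semi-continuity (or full continuity) of $\tilde Q(\bar u,\bar p;\cdot)$ along $h_k\to h$, and the paper never establishes that --- only \cref{prop:wlsc-key-term} is available, which is lower semi-continuity.

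The paper avoids this by not decoupling the limits in $k$ and in $t_n$. Instead, for a fixed $h$ and a fixed $\{t_n\}\in c_0^+$, it passes to a subsequence $\{t_{n_k}\}$ along which the $\liminf$ defining $\underline Q(\bar u,\bar p;\{t_n\},\{h\})$ is realized as a limit. Then it invokes polyhedricity to produce $h_m\to h$ with $h_m=(q_m-\bar u)/\lambda_m$, $h_m\in\bar d^\bot$, and extracts a further subsequence $\{r_m\}\subset\{t_{n_k}\}$ with $r_m\le\lambda_m$, so that $\bar u+r_mh_m\in\mathcal U_{ad}$ by convexity. Applying \cref{lem:2nd-expression} along the \emph{diagonal} pair $(r_m,h_m)$ and passing to the limit, one lands on $\underline Q(\bar u,\bar p;\{r_m\},\{h_m\})$, which \cref{lem:invariant} converts to $\underline Q(\bar u,\bar p;\{r_m\},\{h\})$; since $\{r_m\}$ is a subsequence along which the $\liminf$ is already a limit, this equals $\underline Q(\bar u,\bar p;\{t_n\},\{h\})$. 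Taking the infimum over $\{t_n\}$ at the very end then gives $\tilde Q$ for $h$ directly, with no appeal to semi-continuity of $Q$. The crucial point you are missing is precisely this diagonal coupling of $t_n$ and $h_m$ together with \cref{lem:invariant}; \cref{cor:wlsc-curvature} plays no role in the necessary condition.
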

\begin{proof}
	Since $j$ is G\^{a}teaux-differentiable at $\bar u$, $\bar y$ satisfies \eqref{ass:G-diff} due to \cref{thm:G-diff-characterization}. 
	As a result of \cref{thm:1st-OS-relaxed}, $\bar p:= G_{\1_{\{ \bar y \geq 0 \}}}\left(L'_y(\cdot, \bar y) \right) \in H^1_0(\Omega) \cap C(\overline\Omega)$ fulfills \eqref{eq:1st-OS-relaxed}. It remains to prove \eqref{eq:2nd-OS-nec}. For this purpose, let $h \in \mathcal{C}(\mathcal{U}_{ad};\bar u)$ and $\{t_n\} \in c_0^+$ be arbitrary but fixed. It suffices to show that
    \begin{equation}
        \label{eq:2nd-OS-nec-2}
        \frac{1}{2} \int_\Omega\left[ L''_{yy}(x,\bar y) \left(S'(\bar u; h)\right)^2 + \nu h^2 \right] \dx +  \underline{Q}(\bar u,\bar p;\{t_n\},\{h\}) \geq 0.
    \end{equation}    
    In order to get \eqref{eq:2nd-OS-nec-2}, we first have from \eqref{eq:key-term-sn} that a subsequence $\{t_{n_k}\}$ of $\{t_n\}$ exists and satisfies
    \begin{equation}
        \label{eq:sigma-subsequence}
        \underline{Q}(\bar u,\bar p;\{t_n\},\{h\}) =  \lim_{k \to \infty} \frac{1}{t_{n_k}^2} \int_\Omega \bar p \zeta(\bar u; t_{n_k}, h)  \dx.
    \end{equation}
    Thanks to \cite[Lem.~4.11]{BayenBonnansSilva2014}, the critical cone can be expressed via 
    \begin{equation}
    	\label{eq:critical-cone}
    	\mathcal{C}(\mathcal{U}_{ad};\bar u) = \mathcal{T}(\mathcal{U}_{ad}; \bar u) \cap (\bar d^\bot).
    \end{equation}    
    The polyhedricity of $\mathcal{U}_{ad}$; see \cref{lem:polyhedricity}, implies the existence of sequences $\{h_m\} \subset L^2(\Omega)$, $\{q_m\} \subset \mathcal{U}_{ad}$, and $\{\lambda_m \} \subset (0,\infty)$ satisfying
    \begin{equation*}
        h_m \to h \quad \text{in } L^2(\Omega), \quad h_m = \frac{q_m - \bar u}{\lambda_m}, \quad \text{and} \quad h_m \in \bar d^\bot \quad \text{for all } m \in\N.
    \end{equation*}
    Since $t_{n_k} \to 0^+$ as $k \to \infty$, a subsequence, denoted by $\{r_m\}$, of $\{t_{n_k}\}$ exists such that $ 0 < r_m \leq \lambda_m$ for all $m \in\N$. From this and the convexity of $\mathcal{U}_{ad}$, we have
    \begin{equation*}
        \bar u + r_m h_m = \left(1 - \frac{r_m}{\lambda_m}  \right)\bar u + \frac{r_m}{\lambda_m} q_m \in \mathcal{U}_{ad} \quad \text{for all } m \in\N,
    \end{equation*}
    which gives 
    \begin{equation*}
        \frac{1}{r_m^2}\left( j(\bar u + r_m h_m) - j(\bar u) \right) \geq 0
    \end{equation*}
    for all $m\in\N$ sufficiently large. We then derive from \cref{lem:2nd-expression} and the fact $h_m \in \bar d^\bot$ that  
   	\begin{multline*} 
   		\frac{1}{r_m^2}\int_\Omega\int_0^1 (1-s)L''_{yy}(x,\bar y + s(y_m -\bar y))(y_m - \bar y)^2 \ds \dx \\
   		\begin{aligned}[t]
   		& + \frac{\nu}{2} \norm{h_m}_{L^2(\Omega)}^2+\frac{1}{r_m^2} \int_\Omega \bar p \zeta\left(\bar u; r_m, h_m\right) \dx \geq 0
   		\end{aligned}
   	\end{multline*}   
   	with $y_m := S(\bar u + r_m h_m)$.     
    Taking  the limit inferior and exploiting the fact that  $h_m \to h$ in $L^2(\Omega)$, we can conclude from \cref{prop:control-to-state}, the Lebesgue dominated convergence theorem, and the definition of $\underline Q$ that
    \begin{equation*}
    	\frac{1}{2} \int_\Omega L''_{yy}(x, \bar y)[S'(\bar u;h)]^2 \dx + \frac{\nu}{2} \norm{h}_{L^2(\Omega)}^2 + 
         \underline{Q}(\bar u,\bar p; \{r_m\},\{h_m\}) \geq 0.
    \end{equation*}
    Besides, we have $\underline{Q}(\bar u,\bar p; \{r_m\},\{h_m\}) = \underline{Q}(\bar u,\bar p; \{r_m\},\{h\})$ by virtue of \cref{lem:invariant}. Moreover, since  $\{r_m\}$ is a subsequence of $\{t_{n_k}\}$, we deduce 
    \begin{equation*}
	    \underline{Q}(\bar u,\bar p; \{r_m\},\{h\}) = \underline{Q}(\bar u,\bar p; \{t_n\},\{h\})
    \end{equation*}
    from \eqref{eq:sigma-subsequence}. We then derive $\underline{Q}(\bar u,\bar p; \{r_m\},\{h_m\}) = \underline{Q}(\bar u,\bar p; \{t_n\},\{h\})$, which finally yields \eqref{eq:2nd-OS-nec-2}.
\end{proof}

\begin{theorem}[second-order sufficient optimality conditions] 
    \label{thm:2nd-OS-suf}
    Assume that $\bar u$ is an admissible point of \eqref{eq:P2} such that $\bar y:= S(\bar u)$ satisfies the structural assumption \eqref{eq:structure-non-diff}. Assume further that there exists an adjoint state $\bar p \in H^1_0(\Omega) \cap C(\overline\Omega)$ that together with $\bar u, \bar y$ fulfills \eqref{eq:1st-OS-relaxed} and \eqref{ass:G-diff} as well as
    \begin{equation}
        \label{eq:2nd-OS-suff}
         Q(\bar u,\bar p;h) >0
        \qquad\text{for all }h\in \mathcal{C}(\mathcal{U}_{ad};\bar u)\setminus \{0\}.
    \end{equation}
    Then constants $c, \rho >0$ exist and satisfy
    \begin{equation}
        \label{eq:quadratic-grownth}
        j(\bar u) + c \norm{u - \bar u}_{L^2(\Omega)}^2 \leq j(u) \quad \text{for all } u \in \mathcal{U}_{ad} \cap \overline B_{L^2(\Omega)}(\bar u, \rho).
    \end{equation}
    In particular, $\bar u$ is a strict local minimizer of \eqref{eq:P2}.
\end{theorem}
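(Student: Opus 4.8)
The plan is to argue by contradiction along the classical lines for second-order sufficient conditions, with the generalized derivative $Q$ playing the role of a genuine second derivative of $j$. Suppose \eqref{eq:quadratic-grownth} fails. Then for every $k\in\N$ there is $u_k\in\mathcal{U}_{ad}$ with $0<\norm{u_k-\bar u}_{L^2(\Omega)}\le 1/k$ and $j(u_k)<j(\bar u)+\tfrac1k\norm{u_k-\bar u}_{L^2(\Omega)}^2$. Set $t_k:=\norm{u_k-\bar u}_{L^2(\Omega)}$, so $t_k\to0^+$, and $h_k:=(u_k-\bar u)/t_k$, so $\norm{h_k}_{L^2(\Omega)}=1$; after extracting a subsequence, $h_k\weakto h$ in $L^2(\Omega)$ for some $h$ with $\norm{h}_{L^2(\Omega)}\le1$. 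Note also that $y_k:=S(u_k)\to\bar y$ and, by the Lipschitz continuity in \cref{prop:control-to-state}, $\norm{y_k-\bar y}_{C(\overline\Omega)}\le C t_k$.

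First I would check that $h\in\mathcal{C}(\mathcal{U}_{ad};\bar u)$. Since $\mathcal{U}_{ad}$ is closed and convex, each $h_k$ lies in the radial cone $\mathcal{R}(\mathcal{U}_{ad};\bar u)$, hence in the tangent cone $\mathcal{T}(\mathcal{U}_{ad};\bar u)$, which is convex and closed and therefore weakly closed; consequently $h\in\mathcal{T}(\mathcal{U}_{ad};\bar u)$. For the inclusion $h\in\bar d^\bot$, the first-order condition \eqref{eq:normal_OS-relaxed} gives $\int_\Omega\bar d\,h_k\dx=\tfrac1{t_k}\int_\Omega\bar d(u_k-\bar u)\dx\ge0$; on the other hand, the expansion of \cref{lem:2nd-expression} with $u=u_k$, $t=t_k$, divided by $t_k$, reads
\[
\frac{j(u_k)-j(\bar u)}{t_k}=O(t_k)+\frac{\nu}{2}t_k+\int_\Omega\bar d\,h_k\dx+\frac1{t_k}\int_\Omega\bar p\,\zeta(\bar u;t_k,h_k)\dx,
\]
where the first remainder is $O(t_k)$ because $|L''_{yy}|\le\psi_M$ and $\norm{y_k-\bar y}_{C(\overline\Omega)}\le Ct_k$, and the last term is likewise $O(t_k)$ by the estimate $\bigl|\int_\Omega\bar p\,\zeta(\bar u;t_k,h_k)\dx\bigr|\le c_s\norm{\bar p}_{L^\infty(\Omega)}\norm{y_k-\bar y}_{C(\overline\Omega)}^2$ underlying \cref{lem:Q-defined} and \cref{lem:invariant}. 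Since $j(u_k)-j(\bar u)<\tfrac1k t_k^2$, it follows that $\int_\Omega\bar d\,h_k\dx\to0$, whence $\int_\Omega\bar d\,h\dx=0$. By \eqref{eq:critical-cone} we conclude $h\in\mathcal{C}(\mathcal{U}_{ad};\bar u)$.

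Next I would divide the expansion of \cref{lem:2nd-expression} (again with $u=u_k$, $t=t_k$) by $t_k^2$ and drop the nonnegative term $t_k^{-2}\int_\Omega\bar d(u_k-\bar u)\dx\ge0$ coming from \eqref{eq:normal_OS-relaxed}, obtaining
\begin{multline*}
\frac{j(u_k)-j(\bar u)}{t_k^2}\ge\frac1{t_k^2}\int_\Omega\!\int_0^1(1-s)L''_{yy}\bigl(x,\bar y+s(y_k-\bar y)\bigr)(y_k-\bar y)^2\ds\dx\\
+\frac{\nu}{2}+\frac1{t_k^2}\int_\Omega\bar p\,\zeta(\bar u;t_k,h_k)\dx.
\end{multline*}
By the weak-strong Hadamard differentiability of $S$ in \cref{prop:control-to-state} we have $(y_k-\bar y)/t_k\to S'(\bar u;h)$ strongly in $C(\overline\Omega)$, so the first term on the right converges to $\tfrac12\int_\Omega L''_{yy}(x,\bar y)(S'(\bar u;h))^2\dx$ by dominated convergence; the $\liminf$ of the last term is, by definition, $\underline Q(\bar u,\bar p;\{t_k\},\{h_k\})$, which equals $\underline Q(\bar u,\bar p;\{t_k\},\{h\})\ge\tilde Q(\bar u,\bar p;h)$ by \cref{lem:invariant}. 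Recalling \eqref{eq:curvature-func}, this yields
\[
\liminf_{k\to\infty}\frac{j(u_k)-j(\bar u)}{t_k^2}\ge\frac12 Q(\bar u,\bar p;h)+\frac{\nu}{2}\bigl(1-\norm{h}_{L^2(\Omega)}^2\bigr).
\]
If $h\neq0$ the right-hand side is strictly positive by the assumed condition \eqref{eq:2nd-OS-suff}; if $h=0$ it equals $\tfrac{\nu}{2}>0$, since $S'(\bar u;0)=0$ and $\tilde Q(\bar u,\bar p;0)=0$ give $Q(\bar u,\bar p;0)=0$. In either case this contradicts $j(u_k)-j(\bar u)<\tfrac1k t_k^2$, and \eqref{eq:quadratic-grownth} follows; in particular $\bar u$ is a strict local minimizer of \eqref{eq:P2}.

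The step I expect to be most delicate is the passage to the limit in the $\zeta$-term: one cannot simply replace the weakly convergent sequence $\{h_k\}$ by its limit $h$ inside $\zeta$, and the argument genuinely hinges on the ``weak invariance'' of \cref{lem:invariant} together with the strong convergence $(y_k-\bar y)/t_k\to S'(\bar u;h)$ supplied by the weak-strong Hadamard differentiability of $S$. A secondary subtlety is keeping track of which terms are merely $O(t_k)$ when establishing $h\in\bar d^\bot$, and exploiting the normalization $\norm{h_k}_{L^2(\Omega)}=1$ to absorb the degenerate case $h=0$ via the Tikhonov term.
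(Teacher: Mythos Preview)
Your proof is correct and follows essentially the same contradiction scheme as the paper: normalize the violating sequence, show the weak limit is critical, pass to the $\liminf$ in the Taylor expansion of \cref{lem:2nd-expression} using the weak invariance \eqref{eq:wlsc-semi} of \cref{lem:invariant}, and contradict \eqref{eq:2nd-OS-suff} via the Tikhonov term. The only cosmetic difference is in establishing $h\in\bar d^\bot$: the paper invokes the G\^{a}teaux-differentiability of $j$ (deduced from \eqref{ass:G-diff} via \cref{thm:G-diff-characterization}) to write $j(u_n)-j(\bar u)=j'(\bar u)(u_n-\bar u)+o(t_n)$ and then combines $j'(\bar u)h\le0$ with \eqref{eq:normal_OS-relaxed}, whereas you bound the remainder terms in \cref{lem:2nd-expression} directly as $O(t_k^2)$ using \eqref{eq:structure-non-diff}---but these are the same computation, and your route is arguably more self-contained.
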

\begin{proof}
	We first observe from \eqref{ass:G-diff} and \eqref{eq:p-identity} that $\bar p = G_{\1_{\{\bar y \geq 0 \}}}\left(L'_y(\cdot, \bar y) \right)$. We now prove the theorem by a contradiction argument. Suppose the claim was false. Then there exists a sequence $\{u_n\} \subset \mathcal{U}_{ad}$ that fulfills
    \begin{equation} \label{eq:contradiction}
        \norm{u_n - \bar u}_{L^2(\Omega)} < \frac{1}{n} \quad \text{and} \quad j(\bar u) + \frac{1}{n}\norm{u_n - \bar u}_{L^2(\Omega)}^2 > j(u_n),\quad n\in\N.
    \end{equation}
    We put $t_n := \norm{u_n - \bar u}_{L^2(\Omega)}$ and $h_n := \frac{u_n - \bar u}{t_n}$ and thus have $\norm{h_n}_{L^2(\Omega)} = 1$ and $t_n \in c_0^+$. A subsequence of $\{h_n\}$, also denoted in the same way, exists and satisfies $h_n \rightharpoonup h$ in $L^2(\Omega)$ for some $h \in L^2(\Omega)$.
	Since $\mathcal{U}_{ad}$ is convex, one has $\mathcal{T}(\mathcal{U}_{ad};\bar u) = \text{cl}_2(\mathcal{R}(\mathcal{U}_{ad};\bar u))$ and therefore $\mathcal{T}(\mathcal{U}_{ad};\bar u)$ is weakly closed in $L^2(\Omega)$. Since $h_n \in \mathcal{R}(\mathcal{U}_{ad}; \bar u) \subset   \mathcal{T}(\mathcal{U}_{ad};\bar u)$ and $h_n \rightharpoonup h$ in $L^2(\Omega)$, $h \in \mathcal{T}(\mathcal{U}_{ad};\bar u)$. On the other hand, we deduce from \eqref{ass:G-diff} and \cref{thm:G-diff-characterization} that $j$ is G\^{a}teaux-differentiable in $\bar u$. 
    This, together with the last inequality in \eqref{eq:contradiction}, implies, for $n$ large enough, that
    \begin{equation*}
        j'(\bar u)(u_n - \bar u) + o(t_n) < \frac{1}{n} t_n^2.
    \end{equation*}
    Dividing the above estimate by $t_n$ and then passing to the limit yields $j'(\bar u)h \leq 0$.    
    \eqref{eq:diff-formula} and the definition of $\bar d$ then give 
    \begin{equation*}
	    \int_\Omega \bar d h \dx \leq 0.
    \end{equation*}
    This, in cooperation with \eqref{eq:normal_OS-relaxed}, implies that $\int_\Omega \bar d h \dx =0$. Hence, there holds that $h \in \bar d^\bot \cap \mathcal{T}(\mathcal{U}_{ad};\bar u)$ and so $h \in \mathcal{C}(\mathcal{U}_{ad};\bar u)$ according to \eqref{eq:critical-cone}.

    We now derive a contradiction and thus complete the proof. In fact, from the last inequality in \eqref{eq:contradiction}, we have
    \begin{equation*}
        \frac{1}{t_n^2} \left[j(u_n) - j(\bar u) \right] < \frac{1}{n}.
    \end{equation*}
    From this, \eqref{eq:normal_OS-relaxed} and \cref{lem:2nd-expression}, there holds
    \begin{multline*} 
    	\frac{1}{n} > \frac{1}{t_n^2}\int_\Omega\int_0^1 (1-s)L''_{yy}(x,\bar y + s(y_n -\bar y))(y_n - \bar y)^2 \ds \dx \\
    	\begin{aligned}[t]
    		& + \frac{\nu}{2} \norm{h_n}_{L^2(\Omega)}^2+\frac{1}{t_n^2} \int_\Omega \bar p \zeta\left(\bar u; t_n, h_n\right) \dx
    	\end{aligned}
    \end{multline*} 
    with $y_n := S(u_n)$.    
    Taking the limit inferior, exploiting \cref{prop:control-to-state}, and using that $h_n\weakto h$ in $L^2(\Omega)$, we arrive at
    \begin{equation*}
    	0 \geq \frac{1}{2} \int_\Omega L''_{yy}(x, \bar y)[S'(\bar u;h)]^2 \dx + \frac{\nu}{2} \norm{h}_{L^2(\Omega)}^2 + 
    	\underline{Q}(\bar u,\bar p; \{t_n\},\{h_n\}) +  \frac{\nu}{2}(1 -\norm{h}_{L^2(\Omega)}^2).
    \end{equation*}
    Exploiting \eqref{eq:wlsc-semi} therefore gives
    \begin{equation*}
    	0 \geq \frac{1}{2} \int_\Omega L''_{yy}(x, \bar y)[S'(\bar u;h)]^2 \dx + \frac{\nu}{2} \norm{h}_{L^2(\Omega)}^2 + 
    	\tilde{Q}(\bar u,\bar p; h) +  \frac{\nu}{2}(1 -\norm{h}_{L^2(\Omega)}^2),
    \end{equation*}
    which is identical to
    \begin{equation}  \label{eq:contradiction2}
        0 \geq \frac{1}{2} Q(\bar u, \bar p;h)+  \frac{\nu}{2}(1 -\norm{h}_{L^2(\Omega)}^2).
    \end{equation}
    Since $\norm{h_n}_{L^2(\Omega)} = 1$ and the norm in $L^2(\Omega)$ is weakly lower semicontinuous, there holds $\norm{h}_{L^2(\Omega)} \leq 1$. 
    From this, \eqref{eq:contradiction2}, and \eqref{eq:2nd-OS-suff}, we have $h = 0$. Inserting $h=0$ into \eqref{eq:contradiction2} yields 
    \begin{equation*}
	    0 \geq \frac{\nu}{2} >0,
    \end{equation*}
    which is impossible.
\end{proof}

We finish this section by stating an equivalent version of the sufficient second-order optimality condition \eqref{eq:2nd-OS-suff} that might be useful for estimating discretization errors in finite element approximations. Its proof is similar to that of \cite[Prop.~5.11]{ClasonNhuRosch2020} and is thus omitted.
\begin{proposition}
    \label{prop:2nd-OS-suf2}
    Assume that $\bar u$ is an admissible point of \eqref{eq:P2} such that $\bar y:=S(\bar u)$ fulfills the structural assumption \eqref{eq:structure-non-diff}. Assume further that  there exists an adjoint state $\bar\varphi \in H^1_0(\Omega) \cap C(\overline\Omega)$ that as well as $\bar u, \bar y$ satisfies \eqref{eq:1st-OS-relaxed} and \eqref{ass:G-diff}. Then, \eqref{eq:2nd-OS-suff} holds if and only if there exist constants $c_0, \tau > 0$ such that
    \begin{equation}
        \label{eq:2nd-OS-suff-equiv}
        Q(\bar u,\bar p;h) \geq  c_0 \norm{h}_{L^2(\Omega)}^2 \qquad\text{for all } h \in \mathcal{C}^\tau(\mathcal{U}_{ad};\bar u)
    \end{equation}
    for
    \begin{equation*}
        \mathcal{C}^\tau(\mathcal{U}_{ad};\bar u) := \left\{ h \in L^2(\Omega) \,\middle|\, h(x) 
            \begin{cases}
                \geq 0 & \text{if }  \bar u(x) = \alpha(x)\\
                \leq 0 & \text{if }  \bar u(x) = \beta(x) \\
                = 0 & \text{if }  |\bar d(x)| > \tau 
            \end{cases} 
            \text{a.e. } x \in \Omega
        \right\}.
    \end{equation*}
\end{proposition}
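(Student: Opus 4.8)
The plan is to prove the two implications separately; the converse implication \eqref{eq:2nd-OS-suff-equiv} $\Rightarrow$ \eqref{eq:2nd-OS-suff} is immediate from a cone inclusion, while \eqref{eq:2nd-OS-suff} $\Rightarrow$ \eqref{eq:2nd-OS-suff-equiv} is the substantial part. For the converse, I would first note that $\{|\bar d|>\tau\}\subset\{\bar d\neq 0\}$ for every $\tau>0$, which gives $\mathcal{C}(\mathcal{U}_{ad};\bar u)\subset\mathcal{C}^\tau(\mathcal{U}_{ad};\bar u)$: the two sign conditions defining these cones coincide, and a function vanishing a.e.\ on $\{\bar d\neq 0\}$ a fortiori vanishes a.e.\ on the smaller set $\{|\bar d|>\tau\}$. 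Consequently, if \eqref{eq:2nd-OS-suff-equiv} holds with some $c_0,\tau>0$, then every $h\in\mathcal{C}(\mathcal{U}_{ad};\bar u)\setminus\{0\}$ belongs to $\mathcal{C}^\tau(\mathcal{U}_{ad};\bar u)$ and therefore satisfies $Q(\bar u,\bar p;h)\geq c_0\norm{h}_{L^2(\Omega)}^2>0$, which is \eqref{eq:2nd-OS-suff}.

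For the forward implication I would argue by contradiction. As in the proof of \cref{thm:2nd-OS-suf}, \eqref{ass:G-diff} together with \eqref{eq:p-identity} first gives $\bar p=G_{\1_{\{\bar y\geq 0\}}}(L'_y(\cdot,\bar y))$, so that \cref{lem:Q-defined,lem:homogeneity,cor:wlsc-curvature} are available. Assuming \eqref{eq:2nd-OS-suff} but not \eqref{eq:2nd-OS-suff-equiv}, choosing $c_0=\tau=\tfrac1n$ produces $h_n\in\mathcal{C}^{1/n}(\mathcal{U}_{ad};\bar u)$ with $Q(\bar u,\bar p;h_n)<\tfrac1n\norm{h_n}_{L^2(\Omega)}^2$; in particular $h_n\neq 0$, and since $\mathcal{C}^\tau(\mathcal{U}_{ad};\bar u)$ is a cone and $Q(\bar u,\bar p;\cdot)$ is positively homogeneous of degree $2$ by \cref{lem:homogeneity}, I may rescale so that $\norm{h_n}_{L^2(\Omega)}=1$ and $Q(\bar u,\bar p;h_n)<\tfrac1n$. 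Passing to a subsequence, $h_n\rightharpoonup h$ in $L^2(\Omega)$ for some $h\in L^2(\Omega)$. The key structural step is to verify $h\in\mathcal{C}(\mathcal{U}_{ad};\bar u)$: the two sign constraints pass to the weak limit because the corresponding feasible sets are convex and strongly closed, hence weakly closed; and for the equality constraint I would fix $m\in\N$, observe that $h_n=0$ a.e.\ on $\{|\bar d|>1/m\}$ for all $n\geq m$, multiply by $\1_{\{|\bar d|>1/m\}}$ and pass to the weak limit to obtain $h=0$ a.e.\ on $\{|\bar d|>1/m\}$, and finally let $m\to\infty$ using $\{\bar d\neq 0\}=\bigcup_m\{|\bar d|>1/m\}$.

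Once $h\in\mathcal{C}(\mathcal{U}_{ad};\bar u)$ is established, the weak lower semicontinuity of $Q(\bar u,\bar p;\cdot)$ from \cref{cor:wlsc-curvature} yields $Q(\bar u,\bar p;h)\leq\liminf_{n\to\infty}Q(\bar u,\bar p;h_n)\leq 0$, so \eqref{eq:2nd-OS-suff} forces $h=0$. To reach the contradiction I would return to the identity
\[
Q(\bar u,\bar p;h_n)=\int_\Omega L''_{yy}(x,\bar y)\bigl(S'(\bar u;h_n)\bigr)^2\dx+\nu\norm{h_n}_{L^2(\Omega)}^2+2\tilde{Q}(\bar u,\bar p;h_n)
\]
and pass to the limit: by assertion (iv) in \cref{prop:control-to-state}, $S'(\bar u;h_n)\to S'(\bar u;0)=0$ strongly in $C(\overline\Omega)$, so the first integral tends to $0$, while $|\tilde{Q}(\bar u,\bar p;h_n)|\leq c_s\norm{\bar p}_{L^\infty(\Omega)}\norm{S'(\bar u;h_n)}_{L^\infty(\Omega)}^2\to 0$ by \cref{lem:Q-defined}. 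Since $\norm{h_n}_{L^2(\Omega)}^2=1$ throughout, this gives $Q(\bar u,\bar p;h_n)\to\nu>0$, which contradicts $Q(\bar u,\bar p;h_n)<\tfrac1n$.

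I expect the main obstacle to be exactly this last step: $Q(\bar u,\bar p;\cdot)$ is only weakly lower semicontinuous, not weakly continuous, so the contradiction cannot simply be read off from $h_n\rightharpoonup 0$. The device that makes it work is that the normalization $\norm{h_n}_{L^2(\Omega)}=1$ isolates the coercive Tikhonov contribution $\nu$, while the other two terms are shown to vanish along the sequence — the $L''_{yy}$-term because $S'(\bar u;\cdot)$ is weakly–strongly continuous, and the $\tilde{Q}$-term thanks precisely to the quantitative $L^\infty$-bound of \cref{lem:Q-defined} derived from the structural assumption \eqref{eq:structure-non-diff}. Without that bound the term $2\tilde{Q}(\bar u,\bar p;h_n)$ could a priori fail to converge and spoil the argument, so \cref{lem:Q-defined} combined with \cref{prop:control-to-state} is the crux of the proof.
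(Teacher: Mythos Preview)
Your proof is correct; the paper itself omits the argument, merely noting that it is similar to \cite[Prop.~5.11]{ClasonNhuRosch2020}, and your contradiction scheme (normalize, extract a weak limit, show it lies in the critical cone, invoke \cref{cor:wlsc-curvature} and \eqref{eq:2nd-OS-suff} to force $h=0$, then use \cref{lem:Q-defined} and \cref{prop:control-to-state}(iv) to obtain $Q(\bar u,\bar p;h_n)\to\nu>0$) is precisely the standard route for this type of equivalence. The identification of \cref{lem:Q-defined} as the device that controls the non-smooth term $\tilde Q$ along the weakly vanishing sequence is exactly the point, and the verification that $h$ inherits the critical-cone constraints from $h_n\in\mathcal{C}^{1/n}(\mathcal{U}_{ad};\bar u)$ is handled cleanly.
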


\section{Conclusions}

We have derived second-order optimality conditions for an optimal control problem governed by a semilinear elliptic differential equation with the non-smooth max-function nonlinearity.
The G\^{a}teaux-differentiability of the objective functional in any control variable is precisely characterized by the vanishing of an adjoint state on the  set of all points at which the values of the corresponding state coincide with the non-differentiability point of the max-function. 
Under the G\^{a}teaux-differentiability of the objective functional in the considered control and an structural assumption, we define second-order generalized derivatives of the objective functional. A second-order Taylor-type expansion thus formulates the necessary and sufficient optimality conditions. A no-gap theory of second-order optimality conditions then follows. Finally, an equivalent formulation of the second-order sufficient optimality condition that might be employed for discretization error estimates is also obtained. Such estimates will be studied in a future work.

\bibliographystyle{tfnlm}
\bibliography{nogapsemilinear}

\begin{thebibliography}{10}
\providecommand{\url}[1]{\normalfont{#1}}
\providecommand{\urlprefix}{Available from: }

\bibitem{Kikuchi1984}
Kikuchi~F, Nakazato~K, Ushijima~T. Finite element approximation of a nonlinear
  eigenvalue problem related to {MHD} equilibria. Japan J Appl Math.
  1984;\hspace{0pt}1(2):369--403.

\bibitem{Temam:1975}
Temam~R. A non-linear eigenvalue problem: the shape at equilibrium of a
  confined plasma. Arch Rational Mech Anal. 1975/76;\hspace{0pt}60(1):51--73.

\bibitem{Rappaz:1984}
Rappaz~J. Approximation of a nondifferentiable nonlinear problem related to
  {MHD} equilibria. Numer Math. 1984;\hspace{0pt}45(1):117--133.

\bibitem{Constantin2017}
Christof~C, Clason~C, Meyer~C, et~al. Optimal control of a non-smooth
  semilinear elliptic equation. Mathematical Control and Related Fields.
  2018;\hspace{0pt}8(1):247--276.

\bibitem{CasasMateos2002}
Casas~E, Mateos~M. Second order optimality conditions for semilinear elliptic
  control problems with finitely many state constraints. SIAM J Control Optim.
  2002;\hspace{0pt}40(5):1431--1454.

\bibitem{CasasDhamo2011-2ndOSs}
Casas~E, Dhamo~V. Optimality conditions for a class of optimal boundary control
  problems with quasilinear elliptic equations. Control Cybernet.
  2011;\hspace{0pt}40(2):457--490.

\bibitem{Casas2008}
Casas~E. Necessary and sufficient optimality conditions for elliptic control
  problems with finitely many pointwise state constraints. ESAIM: Control Optim
  Calc Var. 2008;\hspace{0pt}14(3):575--589.

\bibitem{KrumbiegelNeitzelRosch2010}
Krumbiegel~K, Neitzel~I, Rösch~A. Sufficient optimality conditions for the
  {M}oreau-{Y}osida-type regularization concept applied to semilinear elliptic
  optimal control problems with pointwise state constraints. Ann Acad Rom Sci
  Ser Math Appl. 2010;\hspace{0pt}2(2):222--246.

\bibitem{RoschTroltzsch2006}
Rösch~A, Tröltzsch~F. Sufficient second-order optimality conditions for an
  elliptic optimal control problem with pointwise control-state constraints.
  SIAM J Optim. 2006;\hspace{0pt}17(3):776--794.

\bibitem{RoschTroltzsch2003}
Rösch~A, Tröltzsch~F. Sufficient second-order optimality conditions for a
  parabolic optimal control problem with pointwise control-state constraints.
  SIAM J Control Optim. 2003;\hspace{0pt}42(1):138--154.

\bibitem{Troltzsch2010}
Tröltzsch~F. Optimal control of partial differential equations. (Graduate
  Studies in Mathematics; Vol. 112). American Mathematical Society, Providence,
  RI; 2010. Theory, methods and applications, Translated from the 2005 German
  original by Jürgen Sprekels.

\bibitem{CasasTroltzsch:2015}
{Casas}~E, {Tr\"oltzsch}~F. Second order optimality conditions and their role
  in {PDE} control. Jahresber Dtsch Math-Ver. 2015;\hspace{0pt}117(1):3--44.

\bibitem{KunischWachsmuth2012}
Kunisch~K, Wachsmuth~D. Sufficient optimality conditions and semi-smooth
  {N}ewton methods for optimal control of stationary variational inequalities.
  ESAIM: Control, Optimisation and Calculus of Variations.
  2012;\hspace{0pt}18(2):520--547.

\bibitem{Livia2019}
Betz~LM. Second-order sufficient optimality conditions for optimal control of
  nonsmooth, semilinear parabolic equations. SIAM J Control Optim.
  2019;\hspace{0pt}57(6):4033--4062.

\bibitem{ChristofWachsmuth2019}
Christof~C, Wachsmuth~G. On second-order optimality conditions for optimal
  control problems governed by the obstacle problems. Optimization.
  2020;\hspace{0pt}:1--41.

\bibitem{ClasonNhuRosch2020}
Clason~C, Nhu~VH, Rösch~A. No-gap second-order optimality conditions for
  optimal control of a non-smooth quasilinear elliptic equation. ESAIM: Control
  Optim Calc Var. 2020;\hspace{0pt}accepted.

\bibitem{BetzMeyer2015}
Betz~T, Meyer~C. Second-order sufficient optimality conditions for optimal
  control of static elastoplasticity with hardening. ESAIM: Control Optim Calc
  Var. 2015;\hspace{0pt}21(1):271--300.

\bibitem{WachsmuthWachsmuth2011}
Wachsmuth~G, Wachsmuth~D. Convergence and regularization results for optimal
  control problems with sparsity functional. ESAIM: Control Optim Calc Var.
  2011;\hspace{0pt}17:858--886.

\bibitem{WachsmuthWachsmuth2011-CC}
Wachsmuth~G, Wachsmuth~D. Regularization error estimates and discrepancy
  principle for optimal control problems with inequality constraints. Control
  and Cybernet. 2011;\hspace{0pt}40:1125--1158.

\bibitem{Wachsmuth2013}
Wachsmuth~D. Adaptive regularization and discretization of bang-bang optimal
  control problems. Electron Trans Numer Anal. 2013;\hspace{0pt}40:249--267.

\bibitem{DeckelnichHinze2012}
Deckelnick~K, Hinze~M. A note on the approximation of elliptic control problems
  with bang-bang controls. Comput Optim Appl. 2012;\hspace{0pt}51:931--939.

\bibitem{ChristofWachsmuth2018}
Christof~C, Wachsmuth~G. No-gap second-order conditions via a directional
  curvature functional. SIAM Journal on Optimization.
  2018;\hspace{0pt}28(3):2097--2130.

\bibitem{BonnansShapiro2000}
Bonnans~JF, Shapiro~A. Perturbation analysis of optimization problems.
  Springer-Verlag, Berlin, Heidelberg; 2000.

\bibitem{ClasonNhu2019}
Clason~C, Nhu~VH. {B}ouligand-{L}andweber iteration for a non-smooth ill-posed
  problem. Numerische Mathematik. 2019;\hspace{0pt}(142):789--832.

\bibitem{CasasTroltzsch2009}
Casas~E, Tr\"{o}ltzsch~F. First- and second-order optimality conditions for a
  class of optimal control problems with quasilinear elliptic equations. SIAM J
  Control Optim. 2009;\hspace{0pt}48(2):688--718.

\bibitem{Chipot2009}
Chipot~M. Elliptic equations: An introductory course. Birkh\"{a}user Verlag,
  Basel; 2009.

\bibitem{ClasonNhuRosch}
Clason~C, Nhu~VH, Rösch~A. Optimal control of a non-smooth quasilinear
  elliptic equation. Mathematical Control and Related Fields.
  2018;\hspace{0pt}to appear,.

\bibitem{Barbu1984}
Barbu~V. Optimal control of variational inequalities. (Research Notes in
  Mathematics; Vol. 100). Pitman (Advanced Publishing Program), Boston, MA;
  1984.

\bibitem{MeyerSusu2017}
Meyer~C, Susu~LM. Optimal control of nonsmooth, semilinear parabolic equations.
  SIAM J Control Optim. 2017;\hspace{0pt}55(4):2206--2234.

\bibitem{ChristofMuller2020}
Christof~C, Müller~G. Multiobjective optimal control of a non-smooth
  semilinear elliptic partial differential equation. ESAIM: Control Optim Calc
  Var. 2020;\hspace{0pt}accepted.

\bibitem{BayenBonnansSilva2014}
Bayen~T, Bonnans~JF, J~Silva~F. Characterization of local quadratic growth for
  strong minima in the optimal control of semi-linear elliptic equations. Trans
  Amer Math Soc. 2014;\hspace{0pt}366:2063--2087.

\bibitem{AubinFrankowska2009}
Aubin~JP, Frankowska~H. Set-{V}alued {A}nalysis. Birkh\"{a}user Basel; 2009.
  Modern Birkh\"{a}user Classics.

\end{thebibliography}

\end{document}